\crefname{equation}{}{}
\DeclareMathAlphabet{\mathcal}{OMS}{cmsy}{m}{n}
\renewcommand{\u}{\vec{u}}
\newcommand{\x}{\vec{x}}
\newcommand{\z}{\vec{z}}
\newcommand{\V}{\vec{V}}
\newcommand{\w}{\vec{w}}
\renewcommand{\v}{\vec{v}}
\newcommand{\g}{\vec{g}}
\newcommand{\f}{\vec{f}}
\newcommand{\N}{\mathbb{N}}
\newcommand{\X}{\vec{X}}
\newcommand{\Th}{\mathcal{T}_h}
\renewcommand{\P}{\mathcal{P}}
\newcommand{\Om}{\Omega}
\newcommand{\finaltime}{\mathscr{T}}
\DeclareMathOperator{\supp}{supp}
\DeclarePairedDelimiter{\abs}{\lvert}{\rvert}
\DeclarePairedDelimiter{\norm}{\lVert}{\rVert}
\DeclarePairedDelimiter{\twonorm}{\lVert}{\rVert_{L^2(\Omega)}}
\DeclarePairedDelimiter{\stwonorm}{\lVert}{\rVert^2_{L^2(\Omega)}}
\newtheorem{assumption}[theorem]{\sc Assumption}
\crefname{assumption}{Assumption}{Assumptions}
\crefname{proposition}{Proposition}{Propositions}
\begin{document}

\title{Fully discrete best approximation type estimates in $L^{\infty}(I;L^2(\Omega)^d)$ for finite element discretizations of the transient Stokes equations}
\shorttitle{$L^{\infty}(I;L^2(\Omega))$ Stokes best approximation estimates}

\author{%
    {\sc
	Niklas Behringer\thanks{Email: niklas.behringer@tum.de},
    Boris Vexler\thanks{Corresponding author. Email: vexler@ma.tum.de}}\\[2pt]
    Chair of Optimal Control, Center for Mathematical Sciences, \\ Technical University of Munich, 85748 Garching by Munich, Germany\\[6pt]
    {\sc and}\\[6pt]
    {\sc Dmitriy Leykekhman}\thanks{Email: dmitriy.leykekhman@uconn.edu}\\[2pt]
    Department of Mathematics, University of Connecticut, Storrs, CT 06269
}
\shortauthorlist{N. Behringer \emph{et al.}}

\maketitle

\begin{abstract}
    {
	In this article we obtain an optimal best approximation type result for fully discrete approximations of the transient Stokes problem. For the time discretization we use the discontinuous Galerkin method and for the spatial discretization we use standard finite elements for the Stokes problem satisfying the discrete inf-sup condition. The analysis uses  the technique of discrete maximal parabolic regularity. The results require only natural assumptions on the data and do not assume any additional smoothness of the solutions. 
    }
    {
	transient Stokes, discontinuous Galerkin method, finite elements, best approximation, pointwise error estimates, a priori estimates
    }
\end{abstract}

\section{Introduction}
In this paper we consider the following transient Stokes problem with no-slip boundary conditions, \sbox0{\cref{chap:IS:eq:istokes_1,chap:IS:eq:istokes_2,chap:IS:eq:istokes_3,chap:IS:eq:istokes_4}}
\begin{subequations}\label{eq:transient:Stokes}
    \begin{alignat}{3}
	\partial_t\u-\Delta \u+ \nabla p &= \f \quad &&\text{in }I\times\Omega, \label{chap:IS:eq:istokes_1}\\
	\nabla \cdot \u &= 0 \quad &&\text{in } I\times\Omega, \label{chap:IS:eq:istokes_2}\\
	\u &= \vec 0 \quad  &&\text{on } I\times\partial \Omega, \label{chap:IS:eq:istokes_3}\\
	\u(0) &= \u_0 &&\text{in } \Omega.\label{chap:IS:eq:istokes_4}
    \end{alignat}
\end{subequations}
Throughout this work, we assume that $\Omega\subset \mathbb{R}^d$, $d\in \{2,3\}$, is a bounded polygonal/polyhedral Lipschitz domain, $\finaltime>0$ and $I=(0,\finaltime]$.
We will require some (weak) assumptions on the data, which essentially allow for a weak formulation including both velocity and pressure and for $\u \in C(\bar I;L^2(\Omega)^d)$.
We consider fully discrete approximations of problem \cref{eq:transient:Stokes}, where we use compatible finite elements (i.e. satisfying a uniform inf-sup condition) for the space discretization and the discontinuous Galerkin method for the temporal discretization. 
Our goal is to obtain best approximation type results, that do not involve any additional regularity assumptions on the solution beyond the regularity which follows  directly from the assumed data above.
Such results are important in the analysis of PDE constrained optimal control problems that we have in mind. We refer, e.g., to \cite{2011Meidner}, where such estimates  are required for numerical analysis of an optimal control problem constrained by the heat equation with state constraints pointwise in time.

Our main result is of the following form
\begin{equation}\label{eq:intro:best_approx}
    \norm{\u-\u_{{\tau}h}}_{L^{\infty}(I;L^2(\Omega))} \leq C\ell_\tau  \Big(\norm{\u-\v_{{\tau}h}}_{L^{\infty}(I;L^2(\Omega))} +  \norm{\u - R_h^S(\u,p)}_{L^{\infty}(I;L^2(\Omega))} \Big), %
\end{equation}
where $\u_{{\tau}h}$ is the fully discrete finite element approximation of the velocity $\u$, $\v_{{\tau}h}$ is an arbitrary  function from the finite element approximation of the velocity spaces $X^w_{\tau}(\V_h)$, $R_h^S$ is the Ritz projection for the stationary Stokes problem and $\ell_\tau$ is a logarithmic term, explicitly given in the statements of the results, see \cref{chap:IS:corr_best_approx}.

The result \cref{eq:intro:best_approx} links the approximation error for the fully discrete transient Stokes problem to the best possible approximation of a continuous solution $\u$ in the discrete space $X^w_{\tau}(\V_h)$ as well as the approximation of the stationary Stokes problem in $\V_h$. Such results go in hand with only natural assumptions on the problem data and thus are desirable in applications. For this result we do not require additional regularity of the domain allowing, e.g., for reentrant corners and edges. Moreover, we do not require the mesh to be quasi-uniform nor shape regular. Therefore, the result is also true for graded and even anisotropic meshes (provided the discrete inf-sup condition holds uniformly on such meshes). The application of \cref{eq:intro:best_approx} in such cases would require corresponding results for the stationary Stokes problem to estimate $\u - R_h^S(\u,p)$, see \cref{remark:non_convex}.

Under the additional assumption of convexity of $\Omega$ and some approximation properties of the discrete spaces we prove error estimates of the form
\[
\norm{\u-\u_{{\tau}h}}_{L^{\infty}(I;L^2(\Omega))}\nonumber
\leq C\ell_\tau \left({\tau} + h^2\right) \left( \norm{\f}_{L^{\infty}(I;L^2(\Omega))} + \norm{\u_0}_{\V^2}\right),
\]
where $\V^2$ is an appropriate space introduced in the next section. This estimate seems to be optimal (probably up to logarithmic terms) with respect to both the assumed regularity of the data and the order of convergence.

In the case of the heat equation, a similar estimate with respect to $L^\infty(I;L^2(\Omega))$ is derived in \cite{2011Meidner} and for a non-autonomous parabolic problem in \cite[Theorem 4.5]{2018LeVe}. For corresponding estimates in the maximum norm in the case of the heat equation we refer to \cite{1995Eriksson, 2016Leykekhmana, 1980Schatza,2011Meidner}  and for the maximum norm of the gradient to \cite{2017Leykekhmana,2008Leykekhmana,1989Thomee}. Further results are also available in case of discretization only in space. For an overview and respective references we refer to \cite{2016Leykekhmana,2017Leykekhmana}.

We are not aware of any best approximation max-norm estimates in time \emph{and} space for the instationary Stokes problem \cref{eq:transient:Stokes} in the literature. 
A result for the fully discrete problem in form of $L^{\infty}(I;L^2(\Omega)^d)$  estimates based on discontinuous Galerkin methods is provided in \cite{2010Chrysafinos}, including an overview over related results for (semi-)discrete problems based on other discretization approaches.
Recently the numerical behavior of a stabilized discontinuous Galerkin scheme for the Stokes problem has been analyzed in \cite{2017Ahmed}.
Furthermore, there are results for the fully discrete Navier-Stokes problem under moderate regularity assumptions in \cite{1990Heywood}.
Here, we focus on an approach via a discontinuous Galerkin time stepping scheme similar to the approach in \cite{2010Chrysafinos,2017Leykekhman}.
However, all the results mentioned above differ from ours in an essential way. We give a more detailed comparison of our result and the existing results from the literature in \cref{sec:error_estimates}. 

Our main technical tools are continuous and discrete maximal parabolic regularity results. On the continuous level we use the estimate
\[
	\norm{\partial_t \u}_{L^s(I; L^2(\Omega))} + \norm{A \u}_{L^s(I;L^2(\Omega))} + \norm{p}_{L^s(I;L^2(\Omega))}  \leq \frac{C s^2}{s-1} \norm{\f}_{L^s(I; L^2(\Omega))}
\] 
for $\f \in L^s(I; L^2(\Omega)^d)$, $\u_0 = 0$, $1<s<\infty$ and $A$ being the Stokes operator \cref{eq:StokesOperator}, see \cref{proposition:max_reg} and \cref{theorem:weak_with_preasure} for the details and also for the formulation in the case $\u_0 \neq 0$. This estimate holds on a general Lipschitz domain $\Omega$. Assuming in addition the convexity of $\Omega$, we have
\[
	\norm{\partial_t \u}_{L^s(I; L^2(\Omega))} + \norm{\u}_{L^s(I;H^2(\Omega))} + \norm{\nabla p}_{L^s(I;L^2(\Omega))}  \leq \frac{C s^2}{s-1} \norm{\f}_{L^s(I; L^2(\Omega))},
\]
see \cref{remark:Omega_convex_1} and \cref{cor:Omega_convex_2}. On the discrete level, we provide the corresponding estimates which hold even in the limit cases $s=1$ and $s=\infty$ at the expense of an logarithmic term. In a way, we extend the discrete maximal parabolic regularity results from \cite{2017Leykekhman} to the Stokes problem. The resulting estimate is 
\[
	\norm{\partial_t \u_{{\tau}h}}_{L^s(I; L^2(\Omega))} + \norm{A_h \u_{{\tau}h}}_{L^s(I;L^2(\Omega))} \leq C \ln \frac{\finaltime}{{\tau}} \norm{\f}_{L^s(I; L^2(\Omega))},
\]
where $A_h$ is the discrete Stokes operator, see \cref{chap:IS:corollary:maximal_regularity_discrete} for details and the precise formulation. Under the convexity assumption for the domain $\Omega$, similar to the continuous case, we also obtain
\[
\norm{\Delta_h \u_{{\tau}h}}_{L^s(I;L^2(\Omega))} + \norm{\nabla p_{{\tau}h}}_{L^s(I;L^2(\Omega))} \le \ln \frac{\finaltime}{{\tau}} \norm{\f}_{L^s(I; L^2(\Omega))},
\]
where $\Delta_h$ is the discrete Laplace operator, see \cref{remark:delta_est} and \cref{DMPR:Pressure} for details.

In the next section we introduce a framework of function spaces for the treatment of the stationary and transient Stokes problem, the Stokes operator and the resolvent problem. Moreover, we discuss the weak formulation and the regularity issues for \cref{eq:transient:Stokes}.
In \cref{sec:discretization} we discuss the spatial discretization, introduce respective discrete spaces, operators and prove a discrete resolvent estimate. In \cref{chap:IS:section:discontG} we present
a full discretization of \cref{eq:transient:Stokes} and show discrete smoothing and discrete maximal regularity results for the velocity in \cref{sec:maximal_regularity} based on the operator calculus discussed for the heat equation in \cite{2017Leykekhman}. This allows us to prove best approximation results for the velocity in \cref{sec:bestapproximation}. In \cref{sec:error_estimates} we apply the best approximation type results to prove error estimates and compare these result to the existing results in the literature. Finally in \cref{sec:pressure} we explore an expansion of the discrete maximal parabolic estimates to the pressure.

\section{Results on the continuous level}

In this section, we introduce the function spaces we require for the analysis of \cref{eq:transient:Stokes} and state some of the main properties of these spaces. In the later sections we adopt a technique based on discrete maximal parabolic regularity  from \cite{2017Leykekhman}, where we used an operator calculus for $-\Delta$ and its finite element analog $-\Delta_h$. In order to modify the corresponding results, we will introduce the continuous and the discrete Stokes operator. Furthermore, we will require analysis for the resolvent of these operators.  In our presentation  we follow the notation and presentation of \cite[Section~1 and Section~2]{2008Guermond}.

\subsection{Function spaces and Stokes operator}
In the following,
we will use the usual notation to denote the Lebesgue spaces $L^p$ and Sobolev spaces $H^k$ and $W^{k,p}$. The space  $L^2_0(\Om)$ will denote a subspace of $L^2(\Om)$  with mean-zero functions.  The inner product on $L^2(\Omega)$ as well as on $L^2(\Omega)^d$ is denoted by $(\cdot,\cdot)$. To improve readability, we omit the superscript $d$ when having for example $L^2(\Omega)^d$ appear as subscript to norms.
We also introduce 
the following function spaces
\begin{equation}
    \mathcal{V} = \Set{\v \in C^{\infty}_0(\Omega)^d | \nabla \cdot \v =0},\quad
    \V^0 =  \overline{\mathcal{V}}^{L^2},\quad    \V^1 =  \overline{\mathcal{V}}^{H^{1}},%
\end{equation}
where the notation in the last line denotes the completion of the space $\mathcal{V}$ with respect to the $L^2(\Omega)^d$ and $H^1(\Omega)^d$ topology, respectively. Notice that functions in $\V^1$ %
have zero boundary conditions in the trace sense.
Alternatively we have  
$$
\V^1= \set{\v \in H_0^{1}(\Om)^d | \nabla \cdot \v = 0}
$$
by \cite[Theorem III.4.1]{2011Galdi}.

We define the vector-valued Laplace operator
$$
-\Delta \colon D(\Delta) \rightarrow L^2(\Omega)^d,
$$
where the domain $D(\Delta)$ is understood with respect  to $L^2(\Omega)^d$ and is given as
\[
    D(\Delta) = \set{\v \in H_0^{1}(\Om)^d | \Delta \v \in L^2(\Omega)}.
\]
If the domain $\Omega$ is  convex, then the standard $H^2(\Omega)$ regularity implies $ D(\Delta) = H^1_0(\Omega)^d \cap H^2(\Omega)^d$. 
In addition, we introduce the space $\V^2$ as
\[
    \V^2 = \V^1 \cap D(\Delta).
\]
We will also use the following Helmholtz decomposition (cf. \cite[Chapter I, Theorem 1.4]{1977Temam} and \cite[Theorem III.1.1]{2011Galdi})
\begin{equation}
    L^2(\Omega)^d = \V^0 \oplus \nabla \left(H^{1}(\Omega) \cap L^2_0(\Omega)\right). \label{chap:IS:eq:helmholtz_decomposition}
\end{equation}
As usual we define the Helmholtz projection $\mathbb{P} \colon L^2(\Omega)^d \rightarrow \V^{0}$ (often called the Leray  projection) as the $L^2$-projection from $L^2(\Om)^d$ onto $\V^{0}$. %
Using $\mathbb{P}$ and $-\Delta$, we define the Stokes operator $A\colon \V^2 \rightarrow \V^0$ as
\begin{equation}\label{eq:StokesOperator}
    A = - \mathbb{P} \Delta \vert_{\V^2}.
\end{equation}
The operator $A$ is a self adjoint, densely defined and positive definite operator on $\V^0$. We note that $D(A)=\V^2$. There holds
\[
    (A \v,\v) = \|\nabla \v\|^2_{L^2(\Om)} \ge \lambda_0 \|\v\|^2_{L^2(\Om)} \quad \v \in \V^2,
\]
where $\lambda_0>0$ is the smallest eigenvalue of the Laplace operator $-\Delta$ given by
\begin{equation}\label{eq:lambda_0}
    \lambda_0 = \inf_{v \in H^1_0(\Omega)} \frac{\|\nabla v\|^2_{L^2(\Omega)}}{\|v\|^2_{L^2(\Omega)}}.
\end{equation}
Similar to the Laplace operator, for convex polyhedral domains $\Om$ we have the following $H^2$ regularity bound due to
\cite{1989Dauge,1976Kellogg}
\begin{equation}\label{eq: H2 regularity A}
    \|\vec{v}\|_{H^2(\Om)}\le C\|A\vec{v}\|_{L^2(\Om)},\quad \forall \vec{v}\in \V^2.
\end{equation}

\subsection{Stokes resolvent problem}

The key to our analysis is the spectral representation of the semigroup generated by $A$. For that we consider the Stokes resolvent problem for $\f \in L^2(\Omega)^d$\sbox0{\cref{cha:eq:stokes_resolvent_1,cha:eq:stokes_resolvent_2,cha:eq:stokes_resolvent_3}}
\begin{subequations}\label{eq:stokes_resolvent}
    \begin{alignat}{3}
	z\u -\Delta \u + \nabla p &= \f \quad &&\text{in }\Omega, \label{cha:eq:stokes_resolvent_1}\\
	\nabla \cdot \u &= 0 \quad &&\text{in } \Omega, \label{cha:eq:stokes_resolvent_2}\\
	\u &= \vec 0 \quad  &&\text{on } \partial \Omega. \label{cha:eq:stokes_resolvent_3}
    \end{alignat}
\end{subequations}
Here $z \in \Sigma_{\theta,\bar\omega}$, which is defined as
\begin{equation}
    \Sigma_{\theta,\bar\omega}=\set{ c \in \mathbb{C} | c \neq \bar\omega \text{ and } \abs{\arg(c - \bar\omega)} < \theta }.
\end{equation}
The solution $(\u,p)$ to \cref{eq:stokes_resolvent} is a complex valued function in $H_0^1(\Omega)^d \times L^2_0(\Omega)$ as complex valued function spaces with a hermitian inner product.
In our situation we are interested in the case of 
$\theta \in (\pi/2,\pi)$ and $\bar\omega \in [-\lambda_0,0]$ with $\lambda_0>0$ from \cref{eq:lambda_0}. 
\begin{proposition}\label{prop:A:sectorial}
    The operator $A$ is sectorial. In particular, for every $\theta \in (\pi/2,\pi)$ there exists a constant $C = C_\theta$ such that for all $z \in \Sigma_{\theta,\bar\omega}$ with $\bar\omega \in [-\lambda_0,0]$ and $(\u,p)$ being the solution of \cref{eq:stokes_resolvent} with $\f\in L^2(\Omega)^d$ there holds the following resolvent estimate
    \begin{equation}\label{eq: cont. resolvent in L2}
	\norm{\u}_{L^2(\Omega)} \leq \frac{C }{\abs{z- \bar\omega}} \norm{\mathbb{P}\f}_{L^2(\Omega)}.
    \end{equation}
\end{proposition}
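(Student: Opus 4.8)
The plan is to read off the estimate from the weak formulation of the resolvent problem, using that testing against a divergence-free function annihilates the pressure. I would first fix $z\in\Sigma_{\theta,\bar\omega}$ and use the weak form of \cref{eq:stokes_resolvent} over the complex space $\V^1$ with its Hermitian inner product, choosing the test function equal to $\u$ itself. Integrating by parts and using the no-slip condition turns $(-\Delta\u,\u)$ into $\norm{\nabla\u}_{L^2(\Omega)}^2$, while the pressure term $(\nabla p,\u)=-(p,\nabla\cdot\u)$ vanishes by the incompressibility constraint \cref{cha:eq:stokes_resolvent_2}. This yields the single identity
\[
    z\norm{\u}_{L^2(\Omega)}^2+\norm{\nabla\u}_{L^2(\Omega)}^2=(\f,\u).
\]
Since $\u\in\V^0$ and $\f-\mathbb{P}\f$ is $L^2$-orthogonal to $\V^0$, the right-hand side equals $(\mathbb{P}\f,\u)$; this costless replacement is where the projection on the right of \cref{eq: cont. resolvent in L2} originates.

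Next I would absorb the spectral shift. Setting $w=z-\bar\omega$ and $a=\bar\omega\norm{\u}_{L^2(\Omega)}^2+\norm{\nabla\u}_{L^2(\Omega)}^2$, the identity becomes $w\norm{\u}_{L^2(\Omega)}^2+a=(\mathbb{P}\f,\u)$. The decisive observation is that $a$ is real and nonnegative: applying the definition \cref{eq:lambda_0} of $\lambda_0$ componentwise gives $\norm{\nabla\u}_{L^2(\Omega)}^2\geq\lambda_0\norm{\u}_{L^2(\Omega)}^2$, and because $\bar\omega\geq-\lambda_0$ we obtain $a\geq(\bar\omega+\lambda_0)\norm{\u}_{L^2(\Omega)}^2\geq 0$. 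This is precisely the role of the restriction $\bar\omega\in[-\lambda_0,0]$, and it is what makes the resulting constant independent of $\bar\omega$.

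The remaining step is a purely geometric sector bound: for any $w$ with $\abs{\arg w}<\theta$, any real $a\geq 0$, and any $t\geq 0$ one has $\abs{w}\,t\leq(\sin\theta)^{-1}\abs{wt+a}$. I would establish this by distinguishing $\operatorname{Re}(w)\geq 0$, where $\abs{wt+a}\geq\abs{wt}$ since adding a nonnegative real number only enlarges the real part, from $\operatorname{Re}(w)<0$, where $\pi/2<\abs{\arg w}<\theta$ forces $\abs{\operatorname{Im}(wt)}\geq\abs{wt}\sin\theta$ (using that $\sin$ decreases on $(\pi/2,\pi)$) and the real shift $a$ leaves the imaginary part unchanged. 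Applying this with $t=\norm{\u}_{L^2(\Omega)}^2$ together with Cauchy--Schwarz on $(\mathbb{P}\f,\u)$ gives $\abs{w}\norm{\u}_{L^2(\Omega)}^2\leq(\sin\theta)^{-1}\norm{\mathbb{P}\f}_{L^2(\Omega)}\norm{\u}_{L^2(\Omega)}$, and dividing by $\abs{w}\norm{\u}_{L^2(\Omega)}$ produces \cref{eq: cont. resolvent in L2} with $C_\theta=(\sin\theta)^{-1}$. The same sesquilinear form, being bounded and coercive in the shifted variable, also supplies the existence and uniqueness underlying sectoriality.

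I expect the only delicate point to be the bookkeeping in the sector estimate in the regime $\theta\in(\pi/2,\pi)$, that is the case $\operatorname{Re}(w)<0$: there one cannot merely compare moduli and must instead extract the lower bound on the imaginary part from $\abs{\arg w}<\theta$. Everything else is a routine energy identity, and the uniformity of the constant over $\bar\omega\in[-\lambda_0,0]$ follows automatically once the shift has been reduced to the nonnegative quantity $a$.
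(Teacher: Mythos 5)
Your proof is correct and is essentially the argument the paper itself uses: the paper establishes this proposition by noting that a self-adjoint, positive, densely defined operator is sectorial and by pointing to the direct energy argument of \cref{lemma:l2_resolvent_discrete} (test the resolvent equation with the solution itself, drop the pressure via the divergence constraint, use $\norm{\nabla\u}_{L^2(\Omega)}^2\ge\lambda_0\norm{\u}_{L^2(\Omega)}^2$ to make the shifted term non-negative, then a sector-geometry bound plus Cauchy--Schwarz), which is precisely what you carry out at the continuous level. The only difference is cosmetic: where you split into the cases $\operatorname{Re}(w)\ge 0$ and $\operatorname{Re}(w)<0$ to obtain the constant $(\sin\theta)^{-1}$, the paper multiplies the identity by $e^{-i\phi/2}$ and takes real parts, obtaining $\cos(\theta/2)^{-1}$; both constants are valid and comparable.
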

\begin{proof}
    It is straightforward to check, that every self-adjoint positive operator, which is densely defined on a Hilbert space is sectorial. This applies to the operator $A - \bar \omega \operatorname{Id}$ for all $\bar\omega \in [-\lambda_0,0]$, which results in the resolvent estimate \cref{eq: cont. resolvent in L2}. Below we provide a direct proof for the discrete version of the operator $A$, see \cref{lemma:l2_resolvent_discrete}, which is also applicable here. 
\end{proof}

Using the Stokes operator $A$ \cref{eq:StokesOperator} one can rewrite the resolvent estimate \cref{eq: cont. resolvent in L2} as 
\begin{equation}\label{eq: cont. resolvent in L2 using A}
    \norm{(z+A)^{-1}\mathbb{P}\f}_{L^2(\Omega)} \leq \frac{C }{\abs{z- \bar\omega}} \norm{\mathbb{P}\f}_{L^2(\Omega)}.
\end{equation}
\begin{remark}
    The resolvent estimates in $L^p$ norms for $p\neq 2$ are also known. For example, for $d=3$ on Lipschitz domains, \cite{2012Shen} has shown a resolvent estimate for some  interval of $p$ satisfying  $\abs*{ {1}/{p} - {1}/{2} } < {1}/{6}  + \varepsilon$,
    for $\varepsilon>0$.
    On smooth $C^3$ domains it is known to hold even for $p=\infty$ (cf. \cite{2015Abe}). However, the extension to non-smooth convex domains is still an open problem and it even appears in a collection of open problems (cf.~\cite[Problem 66]{2018Mazya}).
\end{remark}

\subsection{Weak formulation and regularity}
In this section we discuss the weak formulation and the regularity  of the transient Stokes problem \eqref{eq:transient:Stokes}. We will use the notation $L^s(I;X)$ for the corresponding Bochner space with a Banach space $X$. Moreover, we will use also the standard notation $H^1(I;X)$. The inner product in $L^2(I;L^2(\Omega)^d)$ and in $L^2(I;L^2(\Omega)^d)$ is denoted by $(\cdot,\cdot)_{I \times \Omega}$. We will also use the notation $(f,g)_{I \times \Omega}$ for the corresponding integral for $f \in L^s(I;L^2(\Omega)^d)$ and $g \in L^{s'}(I;L^2(\Omega)^d)$ with $1\le s \le \infty$ and the dual exponent $s'$.

For the application of Galerkin finite element methods in space and time we will require a space-time weak formulation of the transient Stokes equations with respect to both variables, velocity $\u$ and pressure $p$. In a standard variational setting, e.g., with $f \in L^2(I;(\V^1)')$ or $f \in L^1(I;L^2(\Omega)^d)$, this is not possible, since only distributional pressure can be expected in general, see \cref{remark:distributional_pressure} below. Therefore, we will first introduce the (standard) weak formulation on the divergence free space, then we discuss regularity issues and introduce a velocity-pressure weak formulation based on a slightly stronger assumption on the data.
\begin{proposition}\label{prop:weak_solution}
    Let $\f \in L^1(I;L^2(\Omega)^d)$ and $\u_0 \in \V^0$. Then there exists a unique solution $\u \in L^2(I;\V^1) \cap C(\bar I,\V^0)$ with $\partial_t \u \in L^1(I;\V^0) + L^2(I;(\V^1)')$ fulfilling
    $\u(0) = \u_0$ and
    \begin{equation}\label{eq: weak stokes}
	\langle \partial_t \u,\v\rangle + (\nabla \u,\nabla \v)_{I \times \Omega} = ( \f,\v)_{I \times \Omega} \quad \text{for all } \v \in L^2(I;\V^1)\cap L^\infty(I;\V^0).
    \end{equation}
    Here, $\langle \partial_t \u,\v \rangle$ for $\partial_t \u = \w_1 + \w_2 \in L^1(I;\V^0) + L^2(I;(\V^1)')$ and $\v \in  L^2(I;\V^1)\cap L^\infty(I;\V^0)$ is understood as
    \[
	\langle \partial_t \u,\v\rangle = (\w_1,\v)_{I \times \Omega} + \langle \w_2 ,\v\rangle_{L^2(I;(\V^1)') \times L^2(I;\V^1)}.
    \]
    There holds the estimate
    \[
	\norm{\nabla \u}_{L^2(I;L^2(\Omega))} + \norm{\u}_{C(\bar I;L^2(\Omega))} \le C \left(\norm{\f}_{L^1(I;L^2(\Omega))} + \norm{\u_0}_{\V^0}\right).
    \]
\end{proposition}
\begin{proof}
    For the existence of the solution under the stated assumptions and with the corresponding regularity we refer to \cite[Chapter III, Theorem 1.1]{1977Temam} and its extension to the case $\f \in L^1(I;L^2(\Omega)^d)$ on page 264. The notion of the solution in \cite{1977Temam} is formulated in the almost everywhere sense on $I$, from which the formulation \cref{eq: weak stokes} follows directly by integration in time. The uniqueness of $\u$ solving \cref{eq: weak stokes} is also obtained in the standard manner choosing $\v = \u$ for $\f=0$ and $\u_0 = 0$.
\end{proof}

\begin{remark}\label{remark:fl2v*}
    Another possibility to formulate the notion of the weak solution is to assume $\f \in L^2(I; (\V^1)')$. Since we require in the sequel the additional assumption $\f \in L^s(I;L^2(\Omega)^d)$ for some $s>1$ we prefer to use the formulation from \cref{prop:weak_solution}.
\end{remark}
\begin{remark}\label{remark:distributional_pressure}
    Under the assumptions from \cref{prop:weak_solution} the existence of the corresponding pressure $p$ can be shown only in the following distributional sense. There exists $P \in C(\bar I,L^2(\Omega))$ such that the Stokes system holds in the distributional sense for $\u$ solving \cref{eq: weak stokes} and $p = \partial_t P$. Especially one can not expect in general $p \in L^1(I \times \Omega)$, cf. \cite[Chapter III, p. 267]{1977Temam}.
\end{remark}

In the following, we will discuss some additional regularity for the solution. On the one hand we need a slightly more regularity in order to be able to introduce the pressure $p$ as a function, cf. \cref{remark:distributional_pressure}. Moreover, additional regularity beyond $\u \in C(\bar I;L^2(\Omega)^d)$ is required if we use the best approximation result \cref{eq:intro:best_approx} for providing 
(optimal) error estimates, see \cref{sec:error_estimates}. 
It is well known, cf. again \cite{1977Temam}, that in the sense of \cref{prop:weak_solution} the equation \cref{eq: weak stokes} can be understood as an abstract parabolic problem
\begin{equation}\label{eq:stokes_abstract_equation}
    \begin{aligned}
	\partial_t  \u + A \u &= \mathbb{P}\f\qquad \text{for a.a. } t \in I,\\
	\u(0) &= \u_0,
    \end{aligned}	
\end{equation}
with the Stokes operator $A$ defined in \cref{eq:StokesOperator}.

Furthermore, note that by \cref{prop:A:sectorial}, the operator $A$ is sectorial and thus a generator of an analytic semigroup \cite[Definition 2.0.1, 2.0.2]{1995Lunardi}. 
For the Hilbert space setting it has then been shown in \cite{1964Simon} that this is equivalent to having a maximal regularity estimate of the form
\begin{equation}\label{eq:max_par_reg_1}
    \norm{\partial_t \u}_{L^s(I; L^2(\Omega))} + \norm{A \u}_{L^s(I;L^2(\Omega))}  \leq C_s \norm{\f}_{L^s(I; L^2(\Omega))}
\end{equation}
for problem \cref{eq:transient:Stokes} with $\u_0=0$, $1< s < \infty$ and $\f \in L^s(I;L^2(\Omega)^d)$.
For more details, we refer to \cite[Chapter IV, Theorem 1.6.3]{2014Sohr}.
In \cref{sec:maximal_regularity} we derive a respective estimate for a fully discrete version of \cref{eq:max_par_reg_1}, a so called discrete maximal parabolic regularity result based on ideas from \cite{2017Leykekhman}. 

For proving (optimal) error estimates in \cref{sec:error_estimates} we will use the maximal parabolic estimate \cref{eq:max_par_reg_1} for $s \to \infty$. To this end we will need precise dependence of  the constant $C_s$ on $s$ from \cite[Chapter 1, eq. (3.9), Theorem 3.2]{1994Ashyralyev}. Moreover, we require this regularity result also for the case of non-homogeneous initial conditions. %

To state this result, we consider the space of initial data $\V^0_{1-\frac{1}{s}}$ for $1 < s < \infty$ as in \cite[Chapter 1, Section 3.3]{1994Ashyralyev}. The Banach space $\V^0_{1-\frac{1}{s}}$ with the norm 
\begin{equation}\label{def:interpolation_space}
\norm{\v_0}_{\V^0_{1-\frac{1}{s}}} = \left( \int_0^1 \norm{A\exp(-t A) \v_0}_{\V^0}^s dt \right)^{1/s} + \norm{\v_0}_{\V^0} 
\end{equation}
contains all functions $\v_0\in \V^0$ such that for a solution $\u$ to the transient Stokes problem with right-hand side $\f=0$ and initial data $\u_0=\v_0$ it holds $A\u\in L^s(I;\V^0)$.

\begin{proposition}\label{proposition:max_reg}
    Let $1 < s < \infty$, $\f \in L^s(I;L^2(\Omega)^d)$ and $\u_0 \in \V^0_{1-\frac{1}{s}}$.%
    Then the solution $\u$ to the problem
    \begin{align}
	\partial_t  \u + A \u &= \mathbb{P}\f\qquad \text{for a.a. } t \in I,\\
	\u(\x,0) &= \u_0,
    \end{align}
    fulfills $\partial_t \u, A \u \in L^s(I; L^2(\Omega)^d)$. Moreover, there is a constant $C$ independent on $s$, $f$ and $u_0$ such that
    \begin{equation}
	\norm{\partial_t \u}_{L^s(I; L^2(\Omega))} + \norm{A \u}_{L^s(I;L^2(\Omega))}  \leq \frac{C s^2}{s-1} \Big(\norm{\f}_{L^s(I; L^2(\Omega))} + \norm{\u_0}_{\V^0_{1-\frac{1}{s}}}\Big).
    \end{equation}
\end{proposition}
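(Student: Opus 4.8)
The plan is to use linearity to split the problem into a zero--initial--data part driven by the source and a source--free part driven by the initial datum, and to estimate the two with different tools. Set $\u = \u_1 + \u_2$, where $\u_1$ solves $\partial_t \u_1 + A \u_1 = \mathbb{P}\f$ with $\u_1(0) = 0$, and $\u_2$ solves $\partial_t \u_2 + A \u_2 = 0$ with $\u_2(0) = \u_0$. Since both share the generator $A$, it suffices to bound each summand in $L^s(I;L^2(\Omega))$ and add. Throughout I would use that $\mathbb{P}$ is an orthogonal projection, so $\norm{\mathbb{P}\f}_{L^2(\Omega)} \le \norm{\f}_{L^2(\Omega)}$ pointwise in time, which lets $\mathbb{P}\f$ on the right-hand side be replaced by $\f$ at no cost.

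For $\u_1$ I would invoke the abstract maximal parabolic regularity theorem in the Hilbert space setting. By \cref{prop:A:sectorial} the operator $A - \bar\omega\operatorname{Id}$ is sectorial of angle $\theta \in (\pi/2,\pi)$, so $-A$ generates a bounded analytic semigroup on $\V^0$; by the equivalence established in \cite{1964Simon} this already yields \cref{eq:max_par_reg_1} for each fixed $s \in (1,\infty)$. The delicate point---and the reason the proposition is phrased with an explicit constant---is the dependence on $s$. The estimate is equivalent to the $L^s(I;\V^0)$-boundedness of the operator $\f \mapsto A\int_0^t e^{-(t-r)A}\mathbb{P}\f(r)\,dr$, a convolution with the singular kernel $A e^{-rA}$ of Calder\'on--Zygmund type. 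The $L^s$ operator norm of such a singular integral grows like $\tfrac{1}{s-1}$ as $s \to 1^+$ and like $s$ as $s \to \infty$, and the quantity $\tfrac{s^2}{s-1} = (s+1) + \tfrac{1}{s-1}$ captures exactly this two-sided behaviour. I would therefore cite \cite[Chapter 1, eq. (3.9), Theorem 3.2]{1994Ashyralyev}, where this constant is tracked explicitly, to conclude
\[
\norm{\partial_t \u_1}_{L^s(I;L^2(\Omega))} + \norm{A \u_1}_{L^s(I;L^2(\Omega))} \le \frac{C s^2}{s-1} \norm{\f}_{L^s(I;L^2(\Omega))}
\]
with $C$ independent of $s$.

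For $\u_2 = e^{-tA}\u_0$ the estimate is essentially the definition of the space of initial data. On $(0,1)$ we have $A\u_2(t) = A e^{-tA}\u_0$ and hence
\[
\norm{A\u_2}_{L^s((0,1);\V^0)} = \Big(\int_0^1 \norm{A e^{-tA}\u_0}_{\V^0}^s\, dt\Big)^{1/s} \le \norm{\u_0}_{\V^0_{1-\frac{1}{s}}}
\]
directly from \cref{def:interpolation_space}. On the remaining interval the analytic-semigroup bound $\norm{A e^{-tA}}_{\mathcal{L}(\V^0)} \le C/t \le C$ for $t \ge 1$, together with the finiteness of $\finaltime$, gives $\norm{A\u_2}_{L^s((1,\finaltime);\V^0)} \le C\finaltime^{1/s}\norm{\u_0}_{\V^0} \le C\norm{\u_0}_{\V^0_{1-\frac{1}{s}}}$, uniformly in $s$. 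Since $\partial_t \u_2 = -A\u_2$, the time derivative obeys the same bound, so $\u_2$ is controlled by $C\norm{\u_0}_{\V^0_{1-\frac{1}{s}}}$.

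Adding the two contributions yields the claimed estimate. The only genuine obstacle is the second paragraph: one must use a \emph{quantitative} form of the maximal regularity theorem in which the Calder\'on--Zygmund constant is made explicit in $s$, rather than the qualitative de Simon statement that merely asserts boundedness for each fixed exponent. Everything else---the splitting, the interpolation-space identity, and the short-time/long-time decomposition for the initial datum---is routine.
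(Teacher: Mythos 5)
Your proposal is correct, and its core ingredient coincides with the paper's: the explicit constant $Cs^2/(s-1)$ comes in both cases from \cite[Chapter 1, eq.\ (3.9), Theorem 3.2]{1994Ashyralyev}. The routes differ only in the treatment of the initial datum. The paper's proof is a one-line citation of \cite[Chapter 1, Theorems 3.2, 3.7]{1994Ashyralyev}, the second of these theorems covering nonzero initial data in $\V^0_{1-\frac{1}{s}}$, so no splitting is ever made explicit. You instead decompose $\u=\u_1+\u_2$, use the cited quantitative theorem only for the forced part $\u_1$, and handle $\u_2=e^{-tA}\u_0$ by hand: on $(0,1)$ the quantity $\norm{A\u_2}_{L^s}$ is literally the defining integral in \cref{def:interpolation_space}, while on $(1,\finaltime)$ the analytic-semigroup bound $\norm{Ae^{-tA}}_{\mathcal{L}(\V^0)}\le C/t\le C$ gives an $s$-uniform constant, and $\partial_t\u_2=-A\u_2$ transfers the bound to the time derivative; summing the two pieces is harmless since $s^2/(s-1)\ge 4$ for all $s>1$. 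Your version is self-contained on the initial-data side and makes visible why \cref{def:interpolation_space} is exactly the right norm to place on $\u_0$; the paper's version is shorter but delegates that case entirely to the reference. One caution: your Calder\'on--Zygmund paragraph is motivation, not proof --- the de Simon equivalence gives no control of the constant in $s$, so the estimate for $\u_1$ rests, as you yourself note, on the quantitative citation rather than on that heuristic.
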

\begin{proof}
    This follows from \cite[Chapter 1, Theorems 3.2, 3.7]{1994Ashyralyev} since $A$ is the generator of an analytic semigroup.
\end{proof}

\begin{remark}\label{remark:Omega_convex_1}
    If the domain is polyhedral/polygonal and convex then \cref{proposition:max_reg} provides $\u \in L^s(I;H^2(\Omega)^d)$ and the estimate
    \[
	\norm{\partial_t \u}_{L^s(I; L^2(\Omega))} + \norm{\u}_{L^s(I;H^2(\Omega))}  \leq \frac{C s^2}{s-1} \Big(\norm{\f}_{L^s(I; L^2(\Omega))} + \norm{ \u_0}_{\V^0_{1-\frac{1}{s}}}\Big).
    \]
\end{remark}

\begin{remark}\label{remark:v1}
	There holds $\V^1 \hookrightarrow \V^0_{\frac{1}{2}}$. This follows from the fact that for the homogeneous problem ($\f=0$) with $\u_0 \in \V^1$ the following estimate holds
	\[
	 \norm{A \u}_{L^2(I;L^2(\Omega))} \le 2 \norm{A^{\frac{1}{2}}\u_0}_{L^2(\Omega)} = 2 \norm{\u_0}_{\V^1}.
	\]
	 The above inequality is stated, e.g., in \cite[Chapter IV, Theorem 1.5.2]{2014Sohr}. For the representation of the norm on $\V^1$ by $A{^\frac{1}{2}}$ see, e.g.,  \cite[Chapter III, Lemma 2.2.1]{2014Sohr}. Therefore, for the range $1 < s \le 2$ it is sufficient to assume $\u_0 \in \V^1$ for the estimate in \cref{proposition:max_reg}.
\end{remark}

\begin{remark}\label{remark:s_to_infty}
    If $u_0 \in \V^2$ there holds for every $1 < s < \infty$
    \[
	\norm{\u_0}_{\V^0_{1-\frac{1}{s}}} \le \norm{A\u_0}_{L^2(\Omega)}.
    \]
    We can argue as follows. Since $\u_0 \in \V^2$, we have that $A$ commutes with $\exp(-t A)$ (cf. \cite[Chapter II, eq. (3.2.19)]{2014Sohr}) and due to the boundedness of $\exp(-t A)$ in the operator norm (cf. \cite[Chapter IV, eq. (1.5.8)]{2014Sohr}) we can conclude using definition \cref{def:interpolation_space}:
    \begin{align}
	\norm{ \u_0}_{\V^0_{1-\frac{1}{s}}} 
	&= \left( \int_0^1 \norm{A\exp(-t A) \u_0}_{\V^0}^s dt \right)^{1/s} + \norm{\u_0}_{\V^0}\\
	&\leq \left( \int_0^1 \norm{\exp(-t A)}^s_{\V^0 \rightarrow \V^0} \norm{A\u_0}_{\V^0}^s dt \right)^{1/s} + \norm{\u_0}_{\V^0} \leq C \norm{A\u_0}_{L^2(\Omega)}.
    \end{align}
    Therefore we have the following version of the maximal parabolic regularity estimate
    \[
	\norm{\partial_t \u}_{L^s(I; L^2(\Omega))} + \norm{A \u}_{L^s(I;L^2(\Omega))}  \leq \frac{C s^2}{s-1} \Big(\norm{\f}_{L^s(I; L^2(\Omega))} + \norm{A\u_0}_{L^2(\Omega)}\Big).
    \]
    which we will use in particular for $s \to \infty$.
\end{remark}

The next theorem provides the space-time weak formulation in both variables, velocity and pressure. Please note, that no additional regularity of the domain is required and the assumption $f \in L^1(I;L^2(\Omega)^d)$ from \cref{prop:weak_solution} is only slightly strengthened to $\f \in L^s(I;L^2(\Omega)^d)$ for some $s>1$.  

\begin{theorem}\label{theorem:weak_with_preasure}
    Let $\f \in L^s(I;L^2(\Omega)^d)$ for some $1<s<\infty$ and $\u_0 \in \V^0_{1-\frac{1}{s}}$.%
    Then there exists a unique solution $(\u,p)$ with
    \[
	\u \in L^2(I;\V^1)\cap C(\bar I,\V^0), \; \partial_t \u, A \u \in L^s(I; L^2(\Omega)^d) \quad \text{and} \quad p \in L^s(I;L^2_0(\Omega))
    \]
    fulfilling $\u(0)=\u_0$ and
    \begin{equation}\label{eq: weak stokes with pressure}
	(\partial_t \u,\v)_{I \times \Omega} + (\nabla \u,\nabla \v)_{I \times \Omega} - (p,\nabla \cdot \v)_{I \times \Omega} + (\nabla \cdot \u,\xi)_{I \times \Omega}= (\f,\v)_{I \times \Omega}
    \end{equation}
    for all
    \[
	\v \in L^2(I;H^1_0(\Omega)^d) \cap L^\infty(I;L^2(\Omega)^d)\quad \text{and}\quad \xi \in L^2(I;L^2_0(\Omega)).
    \]
    There holds the estimate
    \[
    \norm{p}_{L^s(I;L^2(\Omega))} \le \frac{C s^2}{s-1} \left( \norm{\vec{f}}_{L^s(I;L^2(\Omega))} + \norm{\u_0}_{\V^0_{1-\frac{1}{s}}}\right).
    \]
\end{theorem}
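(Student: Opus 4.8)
The plan is to construct the pressure from the velocity using the already-established maximal regularity for the velocity in Proposition~\ref{proposition:max_reg}, and to recover both existence and the quantitative bound from the Helmholtz decomposition \cref{chap:IS:eq:helmholtz_decomposition}. Concretely, I would start from the abstract parabolic problem \cref{eq:stokes_abstract_equation}, whose unique solution $\u$ under the hypotheses $\f \in L^s(I;L^2(\Omega)^d)$ and $\u_0 \in \V^0_{1-\frac{1}{s}}$ satisfies $\partial_t\u, A\u \in L^s(I;L^2(\Omega)^d)$, together with the bound from Proposition~\ref{proposition:max_reg}. The regularity inherited from Proposition~\ref{prop:weak_solution} (namely $\u \in L^2(I;\V^1)\cap C(\bar I;\V^0)$) supplies the asserted velocity regularity, and $\u(0)=\u_0$ follows from that proposition as well.

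The heart of the argument is defining $p$. For almost every $t\in I$ consider the residual $\g(t) := \f(t) - \partial_t\u(t) + \Delta\u(t) \in L^2(\Omega)^d$; note $\Delta\u = \mathbb{P}\Delta\u + (\mathrm{Id}-\mathbb{P})\Delta\u$ and, since $A\u = -\mathbb{P}\Delta\u$, the equation \cref{eq:stokes_abstract_equation} gives $\mathbb{P}\g(t)=\vec 0$ for a.a.\ $t$. Hence $\g(t)$ lies in the orthogonal complement of $\V^0$ in $L^2(\Omega)^d$, which by the Helmholtz decomposition \cref{chap:IS:eq:helmholtz_decomposition} equals $\nabla\!\left(H^1(\Omega)\cap L^2_0(\Omega)\right)$. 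Therefore there is a unique $p(t)\in H^1(\Omega)\cap L^2_0(\Omega)$, and in particular $p(t)\in L^2_0(\Omega)$, with $\nabla p(t) = \g(t)$, i.e.\ $\partial_t\u - \Delta\u + \nabla p = \f$ holds in $L^2(\Omega)^d$ for a.a.\ $t$. Testing this identity against $\v$ and integrating by parts in space, using $\v|_{\partial\Omega}=\vec 0$ and adding the (trivially satisfied) divergence equation tested against $\xi$, yields the space-time weak formulation \cref{eq: weak stokes with pressure}. Measurability of $t\mapsto p(t)$ follows from that of $\g$ together with continuity of the (bounded) map sending the gradient part of a field to its scalar potential in $L^2_0(\Omega)$.

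For the quantitative pressure estimate I would use the inf-sup (div) stability on $\Omega$: since $p(t)\in L^2_0(\Omega)$, there is $\v_p \in H^1_0(\Omega)^d$ with $\nabla\cdot\v_p = p(t)$ and $\norm{\v_p}_{H^1(\Omega)}\le C\norm{p(t)}_{L^2(\Omega)}$. Testing \cref{eq: weak stokes with pressure} pointwise in time with this $\v_p$ gives
\[
\norm{p(t)}^2_{L^2(\Omega)} = (\partial_t\u,\v_p) + (\nabla\u,\nabla\v_p) - (\f,\v_p) \le C\big(\norm{\partial_t\u(t)}_{L^2(\Omega)} + \norm{A\u(t)}_{L^2(\Omega)} + \norm{\f(t)}_{L^2(\Omega)}\big)\norm{p(t)}_{L^2(\Omega)},
\]
where I have bounded $\norm{\nabla\u(t)}_{L^2(\Omega)}$ by $\norm{A\u(t)}_{L^2(\Omega)}$ (equivalently used $\|\nabla\u\|^2 = (A\u,\u)$ and Poincar\'e via $\lambda_0$). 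Dividing, raising to the power $s$, integrating in time, and applying Proposition~\ref{proposition:max_reg} then produces exactly the stated bound on $\norm{p}_{L^s(I;L^2(\Omega))}$. Uniqueness of the pair $(\u,p)$ is immediate: $\u$ is unique by Proposition~\ref{prop:weak_solution}, and given $\u$ the relation $\nabla p = \g$ determines $p$ uniquely within $L^2_0(\Omega)$ by the directness of the Helmholtz decomposition.

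I expect the main obstacle to be the measurability and integrability of $t\mapsto p(t)$ at the level of the Bochner space $L^s(I;L^2_0(\Omega))$, rather than any single pointwise-in-time estimate, since the potential is recovered only via the (nonlocal) Helmholtz projection and one must argue that this recovery commutes with the time dependence. A clean way around this is to bypass pointwise reconstruction entirely and instead invoke the stationary inf-sup bound directly in the space-time setting: the functional $\v \mapsto (\partial_t\u,\v)_{I\times\Omega} + (\nabla\u,\nabla\v)_{I\times\Omega} - (\f,\v)_{I\times\Omega}$ is bounded on $L^{s'}(I;H^1_0(\Omega)^d)$ and vanishes on divergence-free test fields, so the space-time de~Rham / inf-sup theorem yields $p \in L^s(I;L^2_0(\Omega))$ with the norm bound, absorbing the measurability issue into the standard duality argument.
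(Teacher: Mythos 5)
Your main construction has a genuine gap, and it sits exactly at the point where the theorem's generality matters. You set $\g(t) = \f(t) - \partial_t\u(t) + \Delta\u(t)$ and assert $\g(t) \in L^2(\Omega)^d$, so that the $L^2$-orthogonal Helmholtz decomposition \cref{chap:IS:eq:helmholtz_decomposition} applies and produces $p(t) \in H^1(\Omega)\cap L^2_0(\Omega)$. But the theorem is stated for an arbitrary, possibly non-convex, Lipschitz polygonal/polyhedral domain, and there the full Laplacian $\Delta\u(t)$ is in general \emph{not} square integrable: maximal parabolic regularity controls only $A\u = -\mathbb{P}\Delta\u$, i.e., the Leray projection of the Laplacian, while the complementary (gradient) part is precisely $\nabla p(t)$ --- the very object whose regularity is in question. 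The justification you need, $\Delta\u(t)\in L^2(\Omega)^d$, is equivalent to $\nabla p(t)\in L^2(\Omega)^d$, i.e., to $p(t)\in H^1(\Omega)$, which is stronger than what the theorem claims and is false on non-convex polygons for generic data because of corner singularities of the stationary Stokes problem. (Reading the paper's identity $D(A)=\V^2$ literally would appear to license your step, but that identity together with self-adjointness of $A$ can only hold when the stationary Stokes problem has enough elliptic regularity to give $\Delta\u \in L^2(\Omega)^d$, essentially convexity; the tell is that \cref{cor:Omega_convex_2} invokes convexity precisely to conclude $\g(t)\in L^2(\Omega)^d$ and $p\in L^s(I;H^1(\Omega))$, and your argument would render that corollary vacuous.)

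The paper avoids this by never leaving $H^{-1}$: it defines $\g(t)$ as the functional $w \mapsto (\f(t),w) - (\partial_t\u(t),w) - (\nabla\u(t),\nabla w)$ on $H^1_0(\Omega)^d$, which requires only $\u(t)\in\V^1$, observes that it annihilates $\V^1$, and invokes the de Rham theorem (Temam, Ch.~I, Props.~1.1 and 1.2) to obtain $p(t)\in L^2_0(\Omega)$ together with $\norm{p(t)}_{L^2(\Omega)} \le C\norm{\g(t)}_{H^{-1}(\Omega)}$; the $L^s$-in-time bound then follows from \cref{proposition:max_reg} much as in your estimate step. Your own closing paragraph --- treating the residual as an $H^{-1}$-bounded functional vanishing on divergence-free fields and using an inf-sup/Bogovskii duality --- is in substance this correct argument, and your Bogovskii-based quantitative estimate is fine once $p(t)\in L^2_0(\Omega)$ is known to exist. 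But in your write-up that route appears only as an unproven one-sentence fallback addressing measurability, not as the existence proof, so the main line of your proposal does not establish the theorem as stated.
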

\begin{proof}
    We take the unique solution $\u$  with $\partial_t \u, A \u \in L^s(I; L^2(\Omega)^d)$ from \cref{proposition:max_reg} and $\u \in L^2(I;\V^1) \cap C(\bar I,\V_0)$ from \cref{prop:weak_solution}. To prove the existence of the corresponding pressure $p$ we consider for almost every $t \in I$ the element $\vec{g}(t) \in H^{-1}(\Omega)^d$,
    \[
	\langle \vec{g}(t),w\rangle = (\vec{f}(t),w) -(\partial_t \u(t),w) - (\nabla \u(t),\nabla w), \quad w \in H^1_0(\Omega)^d,
    \]
    i.e.,
    \[
	\vec{g}(t) = \vec{f}(t)-\partial_t \u(t) + \Delta \u(t) \in H^{-1}(\Omega)^d.
    \]
    This element is well defined due to $\vec{f}(t),\partial_t \u(t) \in L^2(\Omega)^d$ and $\u(t) \in \V^1 \hookrightarrow H^1_0(\Omega)$ for almost every $t \in I$. Moreover, there holds by \cref{eq: weak stokes}
    \[
	\langle \vec{g}(t),w\rangle = 0 \quad \forall w \in \V^1
    \]
    for almost all $t \in I$, since \cref{eq: weak stokes} holds also pointwise almost everywhere. Therefore, we can apply \cite[Chapter I, Proposition 1.1]{1977Temam}, which ensures the existence of a distribution $p(t)$ with
    \begin{equation}\label{eq:nabla_p}
	\nabla p(t) = \vec{g}(t)
    \end{equation}
    in the distributional sense for almost every $t \in I$. By \cite[Chapter I, Proposition 1.2]{1977Temam} we have $p(t) \in L^2_0(\Omega)$ and
    \[
	\norm{p(t)}_{L^2(\Omega)} \le C \norm{\vec{g}(t)}_{H^{-1}(\Omega)}.
    \]
    Using the definition of $\vec{g}$ we obtain $p \in L^s(I;L^2_0(\Omega))$ and
    \[
	\begin{aligned}
	    \norm{p}_{L^s(I;L^2(\Omega))} 
		\le C \norm{\vec{g}}_{L^s(I;H^{-1}(\Omega))}
		&\le C \left( \norm{\vec{f}}_{L^s(I;L^2(\Omega))} +\norm{\partial_t \u}_{L^s(I;L^2(\Omega))} + \norm{\nabla \u}_{L^s(I;L^2(\Omega))}\right)\\
		& \le \frac{C s^2}{s-1} \left( \norm{\vec{f}}_{L^s(I;L^2(\Omega))} + \norm{\u_0}_{\V^0_{1-\frac{1}{s}}}\right),
	\end{aligned}
    \]
    where we have used $\norm{\nabla \v}_{L^2(\Omega)} \le C \norm{A \v}_{L^2(\Omega)}$ for every $\v \in \V^1$ and \cref{proposition:max_reg}.
    With this regularity we obtain from \cref{eq:nabla_p} and the definition of $\vec{g}$
    \[
	(-p,\nabla \cdot \v)_{I \times \Omega} = (\vec{f},\v)_{I \times \Omega} -(\partial_t \u,\v)_{I \times \Omega} - (\nabla \u,\nabla \v)_{I \times \Omega}
    \]
    for all $\v \in L^2(I;H^1_0(\Omega)^d) \cap L^\infty(I;L^2(\Omega)^d)$. Furthermore, it holds
    \[
	(\nabla \cdot \u,\xi)_{I \times \Omega} = 0 \quad \forall \xi \in L^2(I;L^2_0(\Omega))
    \]
    by $\u \in L^2(I;\V^1)$. This results in the stated weak formulation.
\end{proof}
\begin{corollary}\label{cor:Omega_convex_2}
	Let the assumptions of \cref{theorem:weak_with_preasure} be fulfilled. Let in addition the domain $\Omega$ be convex. Then we have $p \in L^s(I, H^1(\Omega))$ and the corresponding estimate holds
    \[
    \norm{p}_{L^s(I;H^1(\Omega))} \le \frac{C s^2}{s-1} \left(\norm{\vec{f}}_{L^s(I;L^2(\Omega))} + \norm{\u_0}_{\V^0_{1-\frac{1}{s}}}\right).
    \]
\end{corollary}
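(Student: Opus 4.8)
The plan is to upgrade the identity for $\nabla p$ already obtained in the proof of \cref{theorem:weak_with_preasure}, using the extra $H^2$ regularity that convexity provides. Recall from \cref{eq:nabla_p} and the definition of $\vec{g}$ that, for almost every $t \in I$,
\[
    \nabla p(t) = \vec{g}(t) = \vec{f}(t) - \partial_t \u(t) + \Delta \u(t)
\]
holds in the distributional sense, and that $p(t) \in L^2_0(\Omega)$ was already established. The first step is to observe that, since $\Omega$ is convex, \cref{remark:Omega_convex_1}, which rests on the $H^2$-regularity bound \cref{eq: H2 regularity A}, yields $\u \in L^s(I;H^2(\Omega)^d)$, so that $\Delta \u(t) \in L^2(\Omega)^d$ for almost every $t$. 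Together with $\vec{f}(t), \partial_t \u(t) \in L^2(\Omega)^d$ this shows that $\vec{g}(t)$ is in fact an $L^2(\Omega)^d$ function rather than merely an element of $H^{-1}(\Omega)^d$.

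Given $\nabla p(t) = \vec{g}(t) \in L^2(\Omega)^d$ and $p(t) \in L^2_0(\Omega) \subset L^2(\Omega)$, we conclude $p(t) \in H^1(\Omega)$ directly by definition of $H^1$. For the quantitative bound I would apply the Poincar\'e--Wirtinger inequality on the mean-zero space $L^2_0(\Omega)$, which gives $\norm{p(t)}_{L^2(\Omega)} \le C \norm{\nabla p(t)}_{L^2(\Omega)}$ and hence
\[
    \norm{p(t)}_{H^1(\Omega)} \le C \norm{\nabla p(t)}_{L^2(\Omega)} = C \norm{\vec{g}(t)}_{L^2(\Omega)}.
\]
Here the connectedness of the convex domain $\Omega$ is exactly what makes the Poincar\'e--Wirtinger inequality applicable.

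It remains to take $L^s(I)$ norms and insert the maximal parabolic regularity estimate. By the triangle inequality,
\[
    \norm{\vec{g}}_{L^s(I;L^2(\Omega))} \le \norm{\vec{f}}_{L^s(I;L^2(\Omega))} + \norm{\partial_t \u}_{L^s(I;L^2(\Omega))} + \norm{\Delta \u}_{L^s(I;L^2(\Omega))},
\]
and since $\norm{\Delta \u}_{L^2(\Omega)} \le C \norm{\u}_{H^2(\Omega)}$, the convex-domain estimate of \cref{remark:Omega_convex_1} bounds both of the last two terms by $\tfrac{C s^2}{s-1}\bigl(\norm{\vec{f}}_{L^s(I;L^2(\Omega))} + \norm{\u_0}_{\V^0_{1-\frac{1}{s}}}\bigr)$. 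Combining this with the pointwise $H^1$ bound and integrating in time gives the claimed estimate. The only genuinely nontrivial step is the first one, where convexity (through the $H^2$-regularity bound) promotes $\vec{g}$ from $H^{-1}$ to $L^2$; the remaining manipulations are routine, the single bookkeeping point being that measurability of $t \mapsto p(t) \in H^1(\Omega)$ follows from the measurability already established in \cref{theorem:weak_with_preasure} together with the identity $\nabla p = \vec{g}$.
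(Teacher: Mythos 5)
Your proof is correct and follows essentially the same route as the paper: convexity (via \cref{remark:Omega_convex_1}) upgrades $\vec{g}(t)=\vec{f}(t)-\partial_t\u(t)+\Delta\u(t)$ from $H^{-1}(\Omega)^d$ to $L^2(\Omega)^d$, and then $\nabla p(t)=\vec{g}(t)$ yields the $H^1$ regularity and the estimate. The only cosmetic difference is that you bound $\norm{p(t)}_{L^2(\Omega)}$ by Poincar\'e--Wirtinger on $L^2_0(\Omega)$, whereas one could equally reuse the $L^s(I;L^2(\Omega))$ bound on $p$ already proved in \cref{theorem:weak_with_preasure}; both are fine.
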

\begin{proof}
By convexity of $\Omega$ we obtain $\u \in L^s(I;H^2(\Omega)^d)$ and the corresponding estimate, see \cref{remark:Omega_convex_1}. Then, we have
\[
\vec{g}(t) = \vec{f}(t)-\partial_t \u(t) + \Delta \u(t) \in L^2(\Omega)^d.
\]
for almost all $t \in I$ in the notation of the proof of \cref{theorem:weak_with_preasure}. This leads to the desired regularity and to the estimate.
\end{proof}

\begin{remark}
    For regularity beyond these estimates, we want to highlight \cite[Corollary 2.1]{1982Heywood}, where the authors show that bounds for, e.g., $\nabla^3 \u$, $\partial_{tt}\u$, go hand in hand with the need of the data $\u_0$, $\f$ and initial pressure $p_0$ (defined as $\lim_{t\to 0} p(t)$) satisfying a nonlocal compatibility condition for $t \rightarrow 0$ at the boundary, which is potentially difficult to verify.
\end{remark}

\section{Spatial discretization and discrete resolvent estimates}\label{sec:discretization}

In this section we consider the discrete version of the operators presented in the previous section.  

\subsection{Spatial discretization}
Let $\{\Th\}$\label{glos:triangulation} be a 
family of triangulations of $\bar \Omega$, consisting of closed simplices, where we denote by $h$\label{glos:h} the maximum mesh-size. Let $\X_h \subset H^1_0(\Omega)^d$\label{glos:fe_space_velocity} and $M_h \subset L^2_0(\Omega)$\label{glos:fe_space_pressure} be a pair of compatible finite element spaces, i.e., them satisfying a uniform discrete inf-sup condition,
\begin{equation}\label{chap02:eq:discrete_infsup}
    \sup_{\v_h \in \X_h}\frac{(q_h,\nabla \cdot \v_h) }{\twonorm{\nabla \v_h}} \geq \beta \twonorm{q_h} \quad \forall q_h \in M_h, 
\end{equation}%
with a constant $\beta >0$ independent of $h$. 
We introduce the usual discrete Laplace operator $-\Delta_h \colon \X_h \rightarrow \X_h$ by
\begin{equation}
    (-\Delta_h \z_h, \v_h) = (\nabla \z_h, \nabla \v_h), \qquad \forall \z_h,\v_h \in \X_h.
\end{equation}
To define a discrete version of the Stokes operator $A$, we first define the space of discretely divergence-free vectors $\V_h$ as
\begin{equation}\label{eq:discretely_divergence_Xh}
    \V_h = \Set{ \v_h \in \X_h | (\nabla \cdot \v_h, q_h) =0 \quad \forall q_h \in M_h}.
\end{equation}
Using this space we can define the discrete Leray projection $\mathbb{P}_h\colon L^1(\Om)^d\to \V_h$ to be the $L^2$-projection onto $\V_h$, i.e., 
\begin{equation}\label{eq:discrete_Leray_projection}
(\mathbb{P}_h \u, \v_h)=(\u,\v_h) \quad\forall \v_h \in \V_h.
\end{equation}
Using $\mathbb{P}_h$, we define the discrete Stokes operator $A_h\colon \V_h \rightarrow \V_h$ as $A_h = - \mathbb{P}_h \Delta_h\vert_{\V_h}$. By this definition we have that for $\u_h \in \V_h$, $A_h\u_h \in \V_h$ fulfills
\begin{equation}
    (A_h \u_h, \v_h) = (\nabla \u_h, \nabla \v_h), \qquad \forall \v_h \in \V_h.
\end{equation}
Notice, since $\V_h \subset \X_h$,   for $\v_h \in \V_h$ we obtain
\begin{equation}\label{chap:IS:eigenvalue}
    (A_h \v_h, \v_h) = (\nabla \v_h, \nabla \v_h)  \geq \lambda_0 \stwonorm{\v_h}
\end{equation}
where $\lambda_0$ is the smallest eigenvalue of $-\Delta$, see \cref{eq:lambda_0}.
This implies that the eigenvalues of $A_h$ are also positive and bounded from below by $\lambda_0$.

Moreover we define the orthogonal space $\V_h^\perp \subset \X_h$ as
\[
\V_h^\perp = \Set{\w_h \in X_h | (\w_h,\v_h) = 0 \quad \forall \v_h \in \V_h}.
\]
The following classical result, cf., e.g., \cite[Chapter II, Theorem 1.1]{1986Girault} will be used to provide existence and uniqueness of the fully discrete pressure in the sequel.
\begin{lemma}\label{lemma:stationary_existence_pressure}
For every $\w_h \in \V_h^\perp$ there exists a unique $p_h \in M_h$ such that
\[
(\w_h,\v_h) = (p_h, \nabla \cdot \v_h) \quad \forall \v_h \in \X_h.
\]
There holds
\[
\norm{p_h}_{L^2(\Omega)} \le \frac{1}{\beta} \norm{\nabla (-\Delta_h)^{-1} \w_h}_{L^2(\Omega)}.
\]
\end{lemma}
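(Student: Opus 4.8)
The plan is to establish bijectivity of the divergence pairing restricted appropriately, and then read off the bound directly from the inf-sup condition. First I would introduce the linear operator $D \colon \X_h \to M_h$ defined by $(D\v_h, q_h) = (\nabla \cdot \v_h, q_h)$ for all $q_h \in M_h$; that is, $D\v_h$ is the $L^2$-projection of $\nabla \cdot \v_h$ onto $M_h$. By the very definition \cref{eq:discretely_divergence_Xh} of the discretely divergence-free space we have $\ker D = \V_h$. The equation to be solved, $(\w_h, \v_h) = (p_h, \nabla \cdot \v_h)$ for all $\v_h \in \X_h$, can be rewritten as $(\w_h, \v_h) = (D^* p_h, \v_h)$, where $D^* \colon M_h \to \X_h$ is the $L^2$-adjoint of $D$, characterized by $(D^* q_h, \v_h) = (q_h, \nabla \cdot \v_h)$. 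Thus the claim reduces to showing that $D^*$ maps $M_h$ bijectively onto $\V_h^\perp$.

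Next I would verify this bijectivity. In the finite-dimensional setting the fundamental identity $\operatorname{range}(D^*) = (\ker D)^\perp = \V_h^\perp$ (orthogonality in the $L^2$ inner product on $\X_h$) shows that $D^*$ maps $M_h$ onto $\V_h^\perp$; since $\w_h \in \V_h^\perp$ this already yields existence of $p_h$. For injectivity of $D^*$ — and hence uniqueness — suppose $D^* q_h = 0$, i.e.\ $(q_h, \nabla \cdot \v_h) = 0$ for all $\v_h \in \X_h$; the discrete inf-sup condition \cref{chap02:eq:discrete_infsup} then forces $\beta \twonorm{q_h} \le \sup_{\v_h \in \X_h} (q_h, \nabla \cdot \v_h)/\twonorm{\nabla \v_h} = 0$, so $q_h = 0$. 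This settles existence and uniqueness of $p_h$.

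For the norm estimate I would apply the inf-sup condition directly to $p_h$ and substitute the defining equation. Setting $\z_h = (-\Delta_h)^{-1}\w_h \in \X_h$, the definition of $-\Delta_h$ gives $(\w_h, \v_h) = (\nabla \z_h, \nabla \v_h)$ for every $\v_h \in \X_h$. Hence, using \cref{chap02:eq:discrete_infsup}, the defining equation, and Cauchy--Schwarz,
\begin{equation*}
\beta \twonorm{p_h} \le \sup_{\v_h \in \X_h} \frac{(p_h, \nabla \cdot \v_h)}{\twonorm{\nabla \v_h}} = \sup_{\v_h \in \X_h} \frac{(\nabla \z_h, \nabla \v_h)}{\twonorm{\nabla \v_h}} \le \twonorm{\nabla \z_h} = \norm{\nabla (-\Delta_h)^{-1}\w_h}_{L^2(\Omega)},
\end{equation*}
which is the asserted bound after dividing by $\beta$.

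The routine parts are the two applications of the inf-sup condition; the one genuinely structural point, which I would state cleanly, is the identity $\operatorname{range}(D^*) = \V_h^\perp$ delivering existence — everything else follows mechanically once the adjoint $D^*$ has been introduced.
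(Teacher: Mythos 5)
Your proof is correct, but it reaches existence by a different mechanism than the paper. The paper decomposes $\X_h = \V_h \oplus \V_h^\perp$, observes that the equation may equivalently be tested only against $\v_h \in \V_h^\perp$, proves uniqueness (and the estimate) from the inf-sup condition \cref{chap02:eq:discrete_infsup}, and then deduces existence from uniqueness via the dimension count $\dim \V_h^\perp = \dim M_h$, i.e., by a square-system argument. You instead obtain existence \emph{directly} from the linear-algebra identity $\operatorname{range}(D^*) = (\ker D)^\perp = \V_h^\perp$, and use the inf-sup condition only for injectivity of $D^*$ (uniqueness) and for the bound. This is a genuine structural difference with a small payoff: your existence step requires no inf-sup condition at all, whereas the paper's dimension equality $\dim \V_h^\perp = \dim M_h$ is asserted without proof and is in fact equivalent to injectivity of $D^*$, i.e., it already encodes the nondegeneracy that \cref{chap02:eq:discrete_infsup} provides; your route makes that dependency transparent and avoids having to justify the dimension count separately. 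For the norm estimate the two arguments coincide in substance — the paper leaves it as ``follows directly from the inf-sup condition,'' and your chain
\begin{equation*}
\beta \twonorm{p_h} \le \sup_{\v_h \in \X_h} \frac{(p_h, \nabla \cdot \v_h)}{\twonorm{\nabla \v_h}} = \sup_{\v_h \in \X_h} \frac{(\nabla \z_h, \nabla \v_h)}{\twonorm{\nabla \v_h}} \le \twonorm{\nabla \z_h}, \qquad \z_h = (-\Delta_h)^{-1}\w_h,
\end{equation*}
is exactly the computation being alluded to, with the key step (rewriting $(\w_h,\v_h)$ via the discrete Laplacian) spelled out explicitly.
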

\begin{proof}
Note that we can decompose $\X_h = \V_h \oplus \V_h^\perp$ and there holds $\dim \V_h^\perp = \dim M_h$. The equation for $p_h$ can be then equivalently rewritten as
\[
p_h \in M_h \quad : \quad (p_h, \nabla \cdot \v_h) = (\w_h,\v_h) \quad \forall \v_h \in \V_h^\perp.
\]
The uniqueness of $p_h$ (as well as the estimate) follows then directly from the inf-sup condition \cref{chap02:eq:discrete_infsup}. The existence follows from uniqueness due to $\dim \V_h^\perp = \dim M_h$.
\end{proof} 

\subsection{Discrete resolvent estimate}
For a given $\f \in L^2(\Omega)^d$ the discrete version of the resolvent problem \cref{eq:stokes_resolvent} takes the form
\begin{equation}\label{eq: fe resolvent stokes}
    \u_h \in \V_h \quad:\quad z(\u_h,\v_h)+ (\nabla \u_h,\nabla \v_h) = (\f,\v_h) \quad \forall \v_h \in \V_h,
\end{equation}
which we can also write compactly using the discrete operator as
\begin{equation}\label{eq: fe resolvent stokes operator}
    (z+A_h)\u_h = \mathbb{P}_h\f.
\end{equation}
Next we establish the discrete resolvent estimate in the $L^2(\Om)^d$ norm, which is the discrete version of \cref{prop:A:sectorial}.
\begin{lemma}\label{lemma:l2_resolvent_discrete}
    For any $\theta \in (\pi/2,\pi)$ there exists a constant $C= C_{\theta}$ such that for any $\nu \in [0,\lambda_0]$ with $\lambda_0>0$ being the smallest eigenvalue of $-\Delta$, see \cref{eq:lambda_0}, it holds
    \begin{equation}
	\twonorm{\u_h}=\twonorm{(z+A_h)^{-1}\mathbb{P}_h\vec{f}} \leq \frac{C_{\theta}}{\abs{z+\nu}} \twonorm{\f} \quad \forall z \in \Sigma_{\theta,-\nu},
    \end{equation}
    where $\u_h \in \V_h$ is the solution to \cref{eq: fe resolvent stokes operator} with right-hand side $\f\in L^2(\Omega)^d$.
\end{lemma}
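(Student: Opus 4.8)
The plan is to reproduce the classical energy argument for self-adjoint sectorial operators: test the discrete equation against its own solution and reduce the resolvent bound to an elementary estimate in the complex plane. First I would take $\v_h = \u_h$ in the weak form \cref{eq: fe resolvent stokes}, which gives the scalar identity
\[
z\,\stwonorm{\u_h} + \stwonorm{\nabla \u_h} = (\f,\u_h).
\]
Because the inner product is hermitian, $\stwonorm{\u_h}$ and $\stwonorm{\nabla\u_h}$ are nonnegative real numbers, so the entire $z$-dependence sits in one explicit scalar. Using the spectral lower bound \cref{chap:IS:eigenvalue}, i.e. $\stwonorm{\nabla \u_h} \ge \lambda_0 \stwonorm{\u_h} \ge \nu \stwonorm{\u_h}$ (this is exactly where the hypothesis $\nu \le \lambda_0$ enters), I would rewrite the left-hand side as
\[
z\,\stwonorm{\u_h} + \stwonorm{\nabla \u_h} = (z+\nu)\,\stwonorm{\u_h} + \big(\stwonorm{\nabla\u_h} - \nu\,\stwonorm{\u_h}\big),
\]
where the term in parentheses is a nonnegative real number.

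The crux is then the following elementary geometric lemma: for $w \in \C$ with $\abs{\arg w} < \theta$ and any real $s \ge 0$ one has $\abs{w+s} \ge \sin\theta\,\abs{w}$. I would prove it by writing $w = \abs{w}e^{i\phi}$ and computing $\abs{w+s}^2 = \abs{w}^2 + 2s\abs{w}\cos\phi + s^2$; if $\cos\phi \ge 0$ this is already $\ge \abs{w}^2$, while if $\cos\phi < 0$ the upward parabola in $s\ge 0$ attains its minimum at $s = -\abs{w}\cos\phi > 0$ with value $\abs{w}^2\sin^2\phi \ge \abs{w}^2\sin^2\theta$, using that $\sin$ is decreasing on $(\pi/2,\pi)$ and $\abs{\phi} < \theta$. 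Applying this with $w = (z+\nu)\stwonorm{\u_h}$ (whose argument equals $\arg(z+\nu)$, which lies in $(-\theta,\theta)$ since $z \in \Sigma_{\theta,-\nu}$) and $s = \stwonorm{\nabla\u_h} - \nu\,\stwonorm{\u_h} \ge 0$, and noting $w+s = z\,\stwonorm{\u_h}+\stwonorm{\nabla\u_h}$, yields
\[
\abs{z+\nu}\,\stwonorm{\u_h} \le \frac{1}{\sin\theta}\,\abs{z\,\stwonorm{\u_h} + \stwonorm{\nabla \u_h}}.
\]

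Finally I would bound the right-hand side of the tested identity by Cauchy--Schwarz, $\abs{(\f,\u_h)} \le \twonorm{\f}\,\twonorm{\u_h}$, so that combining the two previous displays gives
\[
\abs{z+\nu}\,\stwonorm{\u_h} \le \frac{1}{\sin\theta}\,\twonorm{\f}\,\twonorm{\u_h}.
\]
Dividing by $\twonorm{\u_h}$ (the estimate being trivial when $\u_h = 0$) yields $\twonorm{\u_h} \le \frac{C_\theta}{\abs{z+\nu}}\twonorm{\f}$ with $C_\theta = 1/\sin\theta$, which is the claim; existence and uniqueness of $\u_h$ then follow from this a priori bound in the finite-dimensional space $\V_h$. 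I expect the only genuine work to be the complex-plane estimate of the second paragraph—the remainder is the routine testing argument—and the essential structural point is that the hypothesis $\nu \le \lambda_0$ is precisely what makes the remainder $\stwonorm{\nabla\u_h} - \nu\,\stwonorm{\u_h}$ nonnegative, so that the geometric lemma is applicable.
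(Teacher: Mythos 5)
Your proof is correct and follows essentially the same route as the paper's: test \cref{eq: fe resolvent stokes} with $\u_h$, split the resulting identity into $(z+\nu)\stwonorm{\u_h}$ plus the nonnegative real remainder $\stwonorm{\nabla \u_h}-\nu\stwonorm{\u_h}$ (which is exactly where $\nu\le\lambda_0$ enters), bound $\abs{z+\nu}\stwonorm{\u_h}$ by an elementary sector estimate, and finish with Cauchy--Schwarz. The only difference is that elementary step: the paper multiplies the identity by $e^{-i\phi/2}$ and takes real parts, giving $C_\theta = 1/\cos(\theta/2)$, whereas you prove the distance-to-the-negative-ray inequality $\abs{w+s}\ge \sin\theta\,\abs{w}$, giving the slightly sharper $C_\theta = 1/\sin\theta$; both are equally valid.
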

\begin{proof}
    Testing \cref{eq: fe resolvent stokes} with $\u_h$ we have
    \begin{equation}
	(z +\nu) \stwonorm{\u_h} + ((-\Delta_h - \nu)\u_h, \u_h)  = (\f,\u_h) , \label{chap:IS:aux_1}
    \end{equation}
    for any $\nu>0$.
    Since $-\Delta_h$ is positive definite with \cref{chap:IS:eigenvalue},  we have that $-\Delta_h-\nu$ is still a non-negative operator for $\nu \in [0,\lambda_0]$ and thus $((-\Delta_h-\nu)\u_h,\u_h)\geq 0$. Since $z$ is restricted to the sector $\Sigma_{\theta,-\nu}$ we can rewrite \cref{chap:IS:aux_1} as
    \begin{equation}
	\abs{z + \nu}e^{i\phi}\stwonorm{\u_h} + \delta = (\f,\u_h), \label{eq:phi_eq}
    \end{equation}
    where $\delta \geq 0$ and $\abs{\phi} < \theta$. If we multiply \cref{eq:phi_eq} with $e^{-i\phi/2}$, take the real part and use that $\cos(\theta/2)>0$, this results in
    \begin{equation}
	\abs{z + \nu}\stwonorm{\u_h} \leq \cos\left( \theta/2 \right)^{-1} \abs{(\f,\u_h)} = C_{\theta}\abs{(\f, \u_h)},
    \end{equation}
    which after an application of the Cauchy-Schwarz inequality completes the proof.
\end{proof}

\section{Temporal discretization: the discontinuous Galerkin method}
\label{chap:IS:section:discontG}
\leavevmode \\
In this section we introduce  the discontinuous Galerkin method for the time discretization of the transient Stokes equations, a similar method was considered, e.g., in \cite{2010Chrysafinos}.
For that, we partition $I = (0,\finaltime]$ into subintervals $I_m = (t_{m-1},t_m]$ of length ${\tau}_m = t_m - t_{m-1}$, where $0= t_0<t_1<\dots <t_{M-1}<t_M = \finaltime$. The maximal and minimal time steps are denoted by ${\tau}=\max_m {\tau}_m$ and ${\tau}_{\min}=\min_m{\tau}_m$, respectively.  The time partition fulfills the following assumptions:
\begin{enumerate}
    \item There are constants $C,\beta >0$ independent of ${\tau}$ such that
	\begin{equation}
	    {\tau}_{\min} \geq C {\tau}^{\beta}. 
	\end{equation}
    \item There is a constant $\kappa >0$ independent of ${\tau}$ such that for all\\$m=1, 2, \dots, M-1$
	\begin{equation}
	    \kappa^{-1} \leq \frac{{\tau}_m}{{\tau}_{m+1}} \leq \kappa.
	\end{equation}
    \item It holds ${\tau} \leq \frac{\finaltime}{4}$.
\end{enumerate}
For a given Banach space $\mathcal{B}$ and the order $w \in \N$ we define the semi-discrete space $X_{\tau}^w(\mathcal{B})$ of piecewise polynomial functions in time as
\begin{equation}
    X_{\tau}^w(\mathcal{B}) = \Set{ \v_{\tau} \in L^2(I; \mathcal{B}) | \v_{\tau}\vert_{I_m} \in \mathcal{P}_{w,I_m}(\mathcal{B}), m = 1,2, \dots, M},
\end{equation}
where $\mathcal{P}_{w,I_m}(\mathcal{B})$ is the space of polynomial functions of degree less or equal $w$ in time with values in $\mathcal{B}$, i.e.,
\begin{equation}\label{chap:IS:eq:polynomial}
    \mathcal{P}_{w,I_m}(\mathcal{B}) = \Set{ \v_{\tau} \colon I_m \rightarrow \mathcal{B} | \v_{\tau}(t) = \sum_{j=0}^w \v^j\phi_j(t) ,\;\v^j \in \mathcal{B}, j =0, \dots, w}. 
\end{equation}
Here, $\{\phi_j(t)\}$ is a polynomial basis in $t$ of the space $\mathcal{P}_w(I_m)$ of polynomials with degree less or equal $w$ over the interval $I_m$.
We use the following standard notation for a function $\u \in X_{\tau}^w(L^2(\Omega)^d)$%
\begin{equation}
\u_m^+ = \lim_{\varepsilon \rightarrow 0^+}\u(t_m+\varepsilon), \quad \u_m^- = \lim_{\varepsilon \rightarrow 0^+}\u(t_m-\varepsilon),\quad [\u]_m = \u_m^+-\u_m^-.
\end{equation}
For later use we introduce $P_\tau \colon L^2(I;L^2(\Omega)^d) \to X^w_{\tau}(L^2(\Omega^d))$ as the $L^2$ projection in time by
\begin{equation}\label{eq:P_tau}
(\v-P_\tau \v,\w_\tau)_{I \times \Omega} = 0 \quad \forall \w_\tau \in X^w_{\tau}(L^2(\Omega^d)).
\end{equation}
We  will use the following two standard properties
\begin{equation}\label{eq:P_tau_stability}
\norm{P_\tau \v}_{L^s(I,L^2(\Omega))} \le C \norm{\v}_{L^s(I,L^2(\Omega))} \quad \forall \v \in L^s(I,L^2(\Omega)^d), \; 1 \le s \le \infty
\end{equation}
and
\begin{equation}\label{eq:P_tau_estimate}
\norm{\v - P_\tau \v}_{L^\infty(I,L^2(\Omega))} \le C \tau^{1-\frac{1}{s}}\norm{\partial_t \v}_{L^s(I,L^2(\Omega))} \quad \forall \v \in W^{1,s}(I,L^2(\Omega)^d), \; 1 \le s \le \infty.
\end{equation}
We define the bilinear form $\mathfrak{B}$ by
    \[
	\mathfrak{B}(\u,\v) = \sum_{m=1}^M (\partial_t\u, \v )_{I_m\times \Omega} + (\nabla \u, \nabla \v)_{I\times \Omega}
	+ \sum_{m=2}^M ([\u]_{m-1},\v^+_{m-1})_{\Omega} + (\u_0^+,\v_0^+)_{\Omega}.
    \]
    With this bilinear form we define the fully discrete approximation for the transient Stokes problem on the discrete divergence free space $X_{\tau}^w(\V_h)$:
    \begin{equation}\label{eq:fully_discrete_div_free}
	\u_{\tau h} \in X_{\tau}^w(\V_h) \;:\; \mathfrak{B}(\u_{\tau h},\v_{\tau h}) = (\f,\v_{\tau h})_{I \times \Omega} + (\u_0, \v_{\tau h,0}^+)_{\Omega} \quad \forall \v_{\tau h} \in X_{\tau}^w(\V_h).
    \end{equation}
    By a standard argument one can see that this formulation possesses a unique solution (existence follows from uniqueness by the fact that \cref{{eq:fully_discrete_div_free}} is equivalent to a quadratic system of linear equations). 
\begin{remark}\label{remark_on_Ph}
    Note, that the data $\f$ and $\u_0$ in \cref{eq:fully_discrete_div_free} can be replaced by $\mathbb{P}_h \f$ and $\mathbb{P}_h \u_0$ respectively (with $\mathbb{P}_h$ being the discrete Leray projection \cref{eq:discrete_Leray_projection})  without changing the solution. Therefore, this formulation makes sense for a general $\f \in L^1(I; L^1(\Omega)^d)$ and $\u_0 \in L^1(\Omega)^d$. However, for the error analysis later on we will require the assumptions from \cref{theorem:weak_with_preasure}, ensuring the Galerkin orthogonality relation, see also \cref{prop:Galerkin_orthogonality} below.
\end{remark}
The above formulation is not a conforming discretization of the divergence free formulation \cref{eq: weak stokes} due to the fact that $X_{\tau}^w(\V_h)$ is not a subspace of $L^2(I;\V^1)$. In order to introduce a velocity pressure discrete formulation (as a discretization of \cref{eq: weak stokes with pressure}) we consider the following bilinear form
\begin{multline}
    B((\u,p),(\v,q)) = \sum_{m=1}^M ( \partial_t\u, \v )_{I_m\times \Omega} + (\nabla \u, \nabla \v)_{I\times \Omega} - (p,\nabla \cdot \v)_{I\times\Omega} + (\nabla \cdot \u,q)_{I\times\Omega}\\
    + \sum_{m=2}^M ([\u]_{m-1},\v^+_{m-1})_{\Omega} + (\u_0^+,\v_0^+)_{\Omega}.
\end{multline}
The corresponding fully discrete formulation reads: find $(\u_{{\tau}h},p_{{\tau}h})\in X^w_{\tau}(\X_h \times M_h)$ such that
\begin{equation}\label{eq:spacetime_discretization}
   B((\u_{{\tau}h},p_{{\tau}h}),(\v_{{\tau}h},q_{{\tau}h})) = (\f,\v_{{\tau}h})_{I\times \Omega} + (\u_0, \v_{{\tau}h,0}^+)_{\Omega} \qquad \forall (\v_{{\tau}h},q_{{\tau}h}) \in X^w_{\tau}(\X_h \times M_h).
\end{equation}
We note, that for the temporal discretization we use polynomials of the same order for the velocity and pressure. The next proposition states the equivalence of the formulation \cref{eq:fully_discrete_div_free} and \cref{eq:spacetime_discretization}.
\begin{proposition}
For a solution $(\u_{{\tau}h},p_{{\tau}h})$ of \cref{eq:spacetime_discretization} the discrete velocity $\u_{{\tau}h}$ fulfills \cref{eq:fully_discrete_div_free}. Moreover, for a solution $\u_{{\tau}h}$ of \cref{eq:fully_discrete_div_free} there exists a unique $p_{{\tau}h} \in X^w_{\tau}(M_h)$ such that the pair $(\u_{{\tau}h},p_{{\tau}h})$ fulfills \cref{eq:spacetime_discretization}. In particular the solution of \cref{eq:spacetime_discretization} is unique.
\end{proposition}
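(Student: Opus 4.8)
The plan is to prove the two implications separately and then read off uniqueness.

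For the first implication, suppose $(\u_{\tau h},p_{\tau h})$ solves \cref{eq:spacetime_discretization}. First I would test with $(\vec 0,q_{\tau h})$ for arbitrary $q_{\tau h}\in X^w_\tau(M_h)$, which leaves only $(\nabla\cdot\u_{\tau h},q_{\tau h})_{I\times\Omega}=0$. Choosing $q_{\tau h}(t)=\phi(t)q_h$ with $\phi$ ranging over $\mathcal{P}_w(I_m)$ on each $I_m$ and $q_h$ over $M_h$ shows that the (degree $\le w$) polynomial $t\mapsto(\nabla\cdot\u_{\tau h}(t),q_h)$ is $L^2(I_m)$-orthogonal to all of $\mathcal{P}_w(I_m)$, hence vanishes identically, so $\u_{\tau h}(t)\in\V_h$ for all $t$, i.e. $\u_{\tau h}\in X^w_\tau(\V_h)$ in the sense of \cref{eq:discretely_divergence_Xh}. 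Next I would test with $(\v_{\tau h},\vec 0)$ for $\v_{\tau h}\in X^w_\tau(\V_h)\subset X^w_\tau(\X_h)$; since $\v_{\tau h}(t)\in\V_h$ and $p_{\tau h}(t)\in M_h$, the coupling term $(p_{\tau h},\nabla\cdot\v_{\tau h})_{I\times\Omega}$ vanishes, and $B((\u_{\tau h},p_{\tau h}),(\v_{\tau h},\vec 0))$ reduces exactly to $\mathfrak{B}(\u_{\tau h},\v_{\tau h})$, with matching right-hand side. Thus $\u_{\tau h}$ satisfies \cref{eq:fully_discrete_div_free}.

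For the converse, let $\u_{\tau h}\in X^w_\tau(\V_h)$ solve \cref{eq:fully_discrete_div_free}. Since $\u_{\tau h}(t)\in\V_h$, the relation $(\nabla\cdot\u_{\tau h},q_{\tau h})_{I\times\Omega}=0$ holds for all $q_{\tau h}\in X^w_\tau(M_h)$ automatically, so it remains to produce $p_{\tau h}\in X^w_\tau(M_h)$ with
\[
(p_{\tau h},\nabla\cdot\v_{\tau h})_{I\times\Omega}=\mathfrak{B}(\u_{\tau h},\v_{\tau h})-(\f,\v_{\tau h})_{I\times\Omega}-(\u_0,\v_{\tau h,0}^+)_\Omega \eqqcolon F(\v_{\tau h})
\]
for all $\v_{\tau h}\in X^w_\tau(\X_h)$. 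The key observation is that \cref{eq:fully_discrete_div_free} says precisely that the residual $F$ vanishes on $X^w_\tau(\V_h)$. Using the pointwise splitting $\X_h=\V_h\oplus\V_h^\perp$, which lifts to $X^w_\tau(\X_h)=X^w_\tau(\V_h)\oplus X^w_\tau(\V_h^\perp)$ preserving the temporal polynomial degree, and noting that both sides above only see the $\V_h^\perp$-component of $\v_{\tau h}$ (the coupling term kills the $\V_h$-part because $p_{\tau h}(t)\in M_h$, and $F$ kills it by \cref{eq:fully_discrete_div_free}), I would reduce to solving $(p_{\tau h},\nabla\cdot\w_{\tau h})_{I\times\Omega}=F(\w_{\tau h})$ for all $\w_{\tau h}\in X^w_\tau(\V_h^\perp)$.

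This last reduction is where I expect the main work. Because $\dim\V_h^\perp=\dim M_h$ (shown in the proof of \cref{lemma:stationary_existence_pressure}) and the temporal factor $\mathcal{P}_w$ is common to both sides, this is a square linear system, so existence follows from uniqueness. For uniqueness I would test with $\w_{\tau h}$ supported on a single interval $I_m$ of the form $\w_{\tau h}=\psi(t)\w_h$: the scalar polynomial $t\mapsto(p_{\tau h}(t),\nabla\cdot\w_h)$ is then $L^2(I_m)$-orthogonal to all $\psi\in\mathcal{P}_w(I_m)$, hence vanishes, so $(p_{\tau h}(t),\nabla\cdot\w_h)=0$ for every $t$ and every $\w_h\in\V_h^\perp$; since the $\V_h$-part contributes nothing, this gives $(p_{\tau h}(t),\nabla\cdot\v_h)=0$ for all $\v_h\in\X_h$, and the discrete inf-sup condition \cref{chap02:eq:discrete_infsup} forces $p_{\tau h}(t)=0$. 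The obstacle is mainly bookkeeping: correctly converting the time-integrated pairing into this pointwise-in-time inf-sup statement; alternatively \cref{lemma:stationary_existence_pressure} can be invoked interval-by-interval to construct $p_{\tau h}$ directly and supply a stability bound. Finally, overall uniqueness for \cref{eq:spacetime_discretization} follows by combining the two implications: the velocity component of any solution is determined uniquely by \cref{eq:fully_discrete_div_free}, and the pressure is then determined uniquely by the inf-sup argument just described.
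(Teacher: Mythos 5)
Your proof is correct, and its skeleton coincides with the paper's: both directions hinge on the decomposition $\X_h=\V_h\oplus\V_h^\perp$, the dimension count $\dim\V_h^\perp=\dim M_h$, and the discrete inf-sup condition \cref{chap02:eq:discrete_infsup}. The difference is in how the pressure-existence step is executed. The paper first introduces the $L^2(I\times\Omega)$-Riesz representative $\w_{\tau h}\in X^w_\tau(\X_h)$ of the residual, observes that it lies in $X^w_\tau(\V_h^\perp)$, and then invokes \cref{lemma:stationary_existence_pressure} \emph{pointwise in time}, so that $p_{\tau h}\in X^w_\tau(M_h)$ follows from linearity of the stationary solution map and the lemma also hands over a stability bound for free. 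You instead keep the residual as a functional $F$, pose one global square space-time system on $X^w_\tau(\V_h^\perp)$, and get existence from uniqueness, proving uniqueness by time-localization (testing with $\psi(t)\w_h$ and using orthogonality of a degree-$w$ polynomial to all of $\mathcal{P}_w(I_m)$) plus the inf-sup condition — in effect you re-derive the content of \cref{lemma:stationary_existence_pressure} inline at the space-time level rather than citing it pointwise. Both routes are sound; the paper's is slightly more modular and yields a quantitative bound on $p_{\tau h}$, while yours avoids the intermediate Riesz representative and makes the finite-dimensional linear-algebra structure explicit. You also spell out the first implication (testing with $(\vec 0,q_{\tau h})$ and then $(\v_{\tau h},\vec 0)$), which the paper dismisses as trivial; that detail is welcome and correct.
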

\begin{proof}
From \cref{eq:spacetime_discretization} we have $\u_{{\tau}h} \in X^w_{\tau}(\V_h)$ and so it trivially fulfills \cref{eq:fully_discrete_div_free}. Let now  $\u_{{\tau}h} \in X^w_{\tau}(\V_h)$ be the solution of \cref{eq:fully_discrete_div_free}. We define $\w_{\tau h} \in X^w_{\tau}(\X_h)$ by
\[
(\w_{\tau h},\v_{{\tau}h})_{I \times \Omega} =  (\f,\v_{\tau h})_{I \times \Omega} + (\u_0, \v_{\tau h,0}^+)_{\Omega} - \mathfrak{B}(\u_{\tau h},\v_{\tau h}) \quad \forall \v_{\tau h} \in X_{\tau}^w(\X_h).
\]
It follows immediately
\[
(\w_{\tau h},\v_{{\tau}h})_{I \times \Omega} = 0 \quad \forall \v_{\tau h} \in X_{\tau}^w(\V_h)
\]
and one obtains $\w_{\tau h}(t) \in \V_h^\perp$ for every $t \in I_m$, $m=1,2,\dots, M$. At the same time we have globally $\w_{\tau h} \in X_{\tau}^w(\V_h^\perp)$. By \cref{lemma:stationary_existence_pressure} we get the existence and uniqueness of the pressure $p_{\tau h}(t)\in M_h$ with
\[
(\w_{\tau h}(t),\v_h) = (p_{\tau h}(t),\nabla \cdot \v_h) \quad \forall \v_h \in \X_h.
\] 
Therefore $p_{\tau h} \in X_{\tau}^w(M_h)$ and there holds
\[
(\w_{\tau h},\v_{{\tau}h})_{I \times \Omega} = (p_{\tau h},\nabla \cdot \v_{\tau h})_{I \times \Omega} \quad \forall \v_{\tau h} \in X_{\tau}^w(\X_h).
\]
This completes the proof.
\end{proof}

The next proposition provides the Galerkin orthogonality relation for the velocity pressure discretization \cref{eq:spacetime_discretization}, which is essential for our analysis. Please note, that for the velocity formulation \cref{eq:fully_discrete_div_free} the Galerkin orthogonality does not hold due to the fact that $X_{\tau}^w(\V_h)$ is not a subspace of $L^2(I,\V^1)$.
\begin{proposition}\label{prop:Galerkin_orthogonality}
Let the assumptions of \cref{theorem:weak_with_preasure} be fulfilled, i.e., $\f \in L^s(I;L^2(\Omega)^d)$ for some $s>1$ and $\u_0 \in \V^0_{1-\frac{1}{s}}$. Then there holds for the solution $(\u,p)$ of \cref{eq: weak stokes with pressure}
\[
B((\u,p),(\v_{\tau h},q_{\tau h})) =  (\f,\v_{{\tau}h})_{I\times \Omega} + (\u_0, \v_{{\tau}h,0}^+)_{\Omega} \qquad \forall (\v_{{\tau}h},q_{{\tau}h}) \in X^w_{\tau}(\X_h \times M_h)
\]
and consequently 
\begin{equation}\label{eq:Galerkin_orthogonality}
B((\u-\u_{\tau h},p-p_{\tau h}),(\v_{\tau h},q_{\tau h})) = 0 \qquad \forall (\v_{{\tau}h},q_{{\tau}h}) \in X^w_{\tau}(\X_h \times M_h).
\end{equation}
\end{proposition}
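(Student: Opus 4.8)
The plan is to establish the first identity by evaluating each term of the discrete bilinear form $B$ against the \emph{continuous} solution $(\u,p)$, exploiting the extra regularity guaranteed by \cref{theorem:weak_with_preasure}, and then to obtain \cref{eq:Galerkin_orthogonality} by a trivial subtraction using the linearity of $B$ in its first argument.

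First I would observe that the continuous solution satisfies $\u \in C(\bar I,\V^0)$, so $\u$ has no temporal jumps: $[\u]_{m-1} = \u_{m-1}^+ - \u_{m-1}^- = 0$ for $m=2,\dots,M$, which kills the interior jump sum in $B$. For the same reason $\u_0^+ = \u(0) = \u_0$, so the initial term becomes $(\u_0,\v_{\tau h,0}^+)_\Omega$. Next, because \cref{theorem:weak_with_preasure} provides $\partial_t \u \in L^s(I;L^2(\Omega)^d)$ as a genuine Bochner function (not merely a distribution with jumps), the broken time-derivative sum collapses to the full integral, $\sum_{m=1}^M (\partial_t\u,\v_{\tau h})_{I_m\times\Omega} = (\partial_t \u,\v_{\tau h})_{I\times\Omega}$. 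After these simplifications, $B((\u,p),(\v_{\tau h},q_{\tau h}))$ reduces to exactly the left-hand side of the continuous weak formulation \cref{eq: weak stokes with pressure} plus the term $(\u_0,\v_{\tau h,0}^+)_\Omega$.

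The step that requires a small check is the admissibility of the discrete test functions in the continuous formulation. Since $\X_h \subset H^1_0(\Omega)^d$ and the time polynomials are bounded, any $\v_{\tau h}\in X^w_\tau(\X_h)$ lies in $L^2(I;H^1_0(\Omega)^d)\cap L^\infty(I;L^2(\Omega)^d)$; likewise $q_{\tau h}\in X^w_\tau(M_h)$ lies in $L^2(I;L^2_0(\Omega))$ because $M_h\subset L^2_0(\Omega)$. Hence $(\v_{\tau h},q_{\tau h})$ is an admissible pair $(\v,\xi)$ in \cref{eq: weak stokes with pressure}, and inserting it yields
\[
(\partial_t \u,\v_{\tau h})_{I\times\Omega} + (\nabla \u,\nabla \v_{\tau h})_{I\times\Omega} - (p,\nabla\cdot\v_{\tau h})_{I\times\Omega} + (\nabla\cdot\u,q_{\tau h})_{I\times\Omega} = (\f,\v_{\tau h})_{I\times\Omega}.
\]
Combining this with the simplifications above gives the claimed identity $B((\u,p),(\v_{\tau h},q_{\tau h})) = (\f,\v_{\tau h})_{I\times\Omega} + (\u_0,\v_{\tau h,0}^+)_\Omega$.

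Finally, the orthogonality relation \cref{eq:Galerkin_orthogonality} follows by subtracting the fully discrete equation \cref{eq:spacetime_discretization}, whose right-hand side is identical, from the identity just established, and invoking the linearity of $B$ in its first argument. I do not expect any serious obstacle here; the only conceptual point is recognizing that the nonconformity visible in \cref{eq:fully_discrete_div_free} is absent for the velocity-pressure form \cref{eq:spacetime_discretization}, precisely because the continuous solution is temporally continuous and $X^w_\tau(\X_h\times M_h)$ furnishes legitimate test functions for \cref{eq: weak stokes with pressure}.
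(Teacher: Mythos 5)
Your proof is correct and follows essentially the same route as the paper: verify that the discrete pair $(\v_{\tau h},q_{\tau h})$ is an admissible test function in \cref{eq: weak stokes with pressure}, note that the jump terms in $B$ vanish since $\u \in C(\bar I;\V^0)$ and that $(\u_0^+,\v_{\tau h,0}^+)=(\u_0,\v_{\tau h,0}^+)$ by $\u(0)=\u_0$, then subtract \cref{eq:spacetime_discretization}. Your explicit remarks on the collapse of the broken time-derivative sum and on the linearity step are details the paper leaves implicit, but the argument is the same.
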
 
\begin{proof}
In the setting of \cref{theorem:weak_with_preasure} we have $\u \in L^2(I,\V^1) \cap C(\bar I,\V^0)$, $\partial_t \u \in L^s(I,L^2(\Omega)^d)$ and $p \in L^s(I,L^2(\Omega))$.  Therefore all terms in the bilinear form are well defined. For the test space we have
\[
X^w_{\tau}(\X_h) \subset  L^2(I;H^1_0(\Omega)^d) \cap L^\infty(I;L^2(\Omega)^d)\quad \text{and}\quad  X^w_{\tau}(M_h) \subset L^2(I;L^2_0(\Omega)).
\]
Therefore, we can choose $(\v_{{\tau}h},q_{{\tau}h}) \in X^w_{\tau}(\X_h \times M_h)$ as test functions in \cref{eq: weak stokes with pressure}. Moreover, all jump terms vanish due to $\u \in C(\bar I,\V^0)$ and 
\[
(\u_0^+,\v_{\tau h,0}^+) = (\u_0,\v_{\tau h,0}^+)
\]
due to $\u(0) = \u_0$. This completes the proof.
\end{proof}

In the following, we also consider a dual problem, where we use a dual representation of the bilinear form $B$
\begin{equation}\label{eq:dualB}
\begin{multlined}
    B((\u,p),(\v,q)) = -\sum_{m=1}^M \langle \u, \partial_t\v \rangle_{I_m\times \Omega} + (\nabla \u, \nabla \v)_{I\times \Omega} - (p,\nabla \cdot \v)_{I\times\Omega}\\ + (\nabla \cdot \u,q)_{I\times\Omega}
    - \sum_{m=1}^{M-1} (\u^{-}_{m},[\v]_{m})_{\Omega} + (\u_M^-,\v_M^-)_{\Omega},
\end{multlined}
\end{equation}
which is obtained by integration by parts and rearranging the terms in the sum.

\section{Fully discrete smoothing and maximal regularity estimates}\label{sec:maximal_regularity}
The goal of this section is to extend the results on the discrete maximal parabolic regularity  for the discretization of the heat equation from \cite{2017Leykekhman} to the transient Stokes equations.
The results in \cite{2017Leykekhman} rely solely on the resolvent estimates for $-\Delta_h$ and the Dunford-Taylor operator calculus. Since \cref{lemma:l2_resolvent_discrete} establishes the resolvent estimate for $A_h$, all the results from \cite{2017Leykekhman} continue to hold for $A_h$ as well.  We will state the results below.

The first result is a smoothing estimate for the homogeneous problem ($f=0$). 

\begin{theorem}\label{chap:IS:theorem:maximal_regularity_smoothing_discrete}
    Let $\f=\vec{0}$ and let $\u_0 \in L^2(\Omega)^d$. Let $\u_{{\tau}h}\in X^w_{\tau}(\V_h)$ be the solution to \cref{eq:fully_discrete_div_free}. Then, there holds for $m= 1,2, \dots, M$
    \begin{equation}\label{eq:smoothing_estimate}
	\norm{\partial_t  \u_{{\tau}h}}_{L^{\infty}(I_m;L^2(\Omega))} + \norm{A_h \u_{{\tau}h}}_{L^{\infty}(I_m;L^2(\Omega))}
	+  \norm{{\tau}^{-1}_m[\u_{{\tau}h}]_{m-1}}_{L^2(\Omega)} \leq \frac{C}{t_m} \norm{\mathbb{P}_h\u_0}_{L^2(\Omega)}.
    \end{equation}
    Here we have $[\u_{\tau h}]_0 = \u^+_{\tau h,0} - \mathbb{P}_h\u_0$.
\end{theorem}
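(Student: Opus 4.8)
The plan is to prove the smoothing estimate \cref{eq:smoothing_estimate} by transplanting the corresponding result for the heat equation from \cite{2017Leykekhman} to the Stokes setting. The entire argument in \cite{2017Leykekhman} is built on a Dunford--Taylor operator calculus that needs only two ingredients: the resolvent estimate on the relevant sector, and the fact that the discrete operator generates the semigroup solving the discrete homogeneous problem. \cref{lemma:l2_resolvent_discrete} furnishes exactly the resolvent estimate $\norm{(z+A_h)^{-1}\mathbb{P}_h\f}_{L^2(\Omega)} \le C_\theta \abs{z+\nu}^{-1}\norm{\f}_{L^2(\Omega)}$ for $z \in \Sigma_{\theta,-\nu}$ with $\nu \in [0,\lambda_0]$, which is structurally identical to the resolvent estimate for $-\Delta_h$ used there. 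Hence the first step is to observe that all steps of \cite{2017Leykekhman} carry over verbatim once $-\Delta_h$ is replaced by $A_h$, since $A_h$ is self-adjoint and positive definite on the Hilbert space $\V_h$ with the discrete inner product, and the analysis never uses anything beyond sectoriality and self-adjointness.

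Second, I would set up the discrete solution operator. Restricting \cref{eq:fully_discrete_div_free} with $\f=\vec 0$ to the divergence-free space $X^w_\tau(\V_h)$, the scheme reads $\mathfrak{B}(\u_{\tau h},\v_{\tau h}) = (\mathbb{P}_h\u_0,\v^+_{\tau h,0})_\Omega$, so the data enters only through $\mathbb{P}_h\u_0 \in \V_h$; this explains the appearance of $\mathbb{P}_h\u_0$ on the right-hand side and of $[\u_{\tau h}]_0 = \u^+_{\tau h,0} - \mathbb{P}_h\u_0$ in the statement. On each interval $I_m$ the discrete solution can be written through the contour integral $\u_{\tau h}(t) = \frac{1}{2\pi i}\int_{\Gamma} e^{zt}(z+A_h)^{-1}(\cdots)\,dz$ representation (more precisely, through the discrete time-stepping operators expressed via the resolvent), and the estimates for $\partial_t \u_{\tau h}$, $A_h\u_{\tau h}$, and the jump term $\tau_m^{-1}[\u_{\tau h}]_{m-1}$ follow by inserting the resolvent bound into these contour integrals and estimating against $\norm{\mathbb{P}_h\u_0}_{L^2(\Omega)}$, producing the algebraic decay factor $t_m^{-1}$.

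The main obstacle is, in essence, bookkeeping rather than mathematics: one must verify that the operator-calculus machinery of \cite{2017Leykekhman} genuinely depends on nothing more than the resolvent estimate and sectoriality, so that the substitution $-\Delta_h \rightsquigarrow A_h$ is legitimate. The one point requiring care is the interplay between the discrete Leray projection $\mathbb{P}_h$ and the jump/initial terms, since $A_h$ acts on $\V_h$ while the data $\u_0$ lives in $L^2(\Omega)^d$; this is handled precisely by \cref{remark_on_Ph}, which guarantees that replacing $\u_0$ by $\mathbb{P}_h\u_0$ leaves the discrete solution unchanged, so the homogeneous discrete evolution is genuinely an evolution on $\V_h$ driven by $A_h$. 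Given this, the smoothing bound is then an immediate consequence of the transplanted heat-equation result.

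\begin{proof}
Since $\f = \vec 0$, the discrete equation \cref{eq:fully_discrete_div_free} becomes, by \cref{remark_on_Ph}, an evolution on the divergence-free space $\V_h$ driven solely by the data $\mathbb{P}_h\u_0 \in \V_h$. The discrete Stokes operator $A_h$ is self-adjoint and positive definite on $\V_h$ and satisfies the resolvent estimate of \cref{lemma:l2_resolvent_discrete} on every sector $\Sigma_{\theta,-\nu}$ with $\theta \in (\pi/2,\pi)$ and $\nu \in [0,\lambda_0]$. These are exactly the hypotheses under which the discrete smoothing estimate is established for $-\Delta_h$ in \cite{2017Leykekhman}, and the Dunford--Taylor operator-calculus proof given there uses no property of the underlying operator beyond sectoriality, self-adjointness and this resolvent bound. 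Replacing $-\Delta_h$ by $A_h$ and $\u_0$ by $\mathbb{P}_h\u_0$ throughout therefore yields \cref{eq:smoothing_estimate}, with the convention $[\u_{\tau h}]_0 = \u^+_{\tau h,0} - \mathbb{P}_h\u_0$ arising from the initial jump term in $\mathfrak{B}$.
\end{proof}
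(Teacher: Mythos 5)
Your proposal is correct and follows essentially the same route as the paper's own proof: both arguments transplant the heat-equation machinery of \cite{2017Leykekhman} by observing that it relies only on the sectorial resolvent estimate (supplied for $A_h$ by \cref{lemma:l2_resolvent_discrete}), represent the discrete solution via a Dunford--Taylor contour integral, and use \cref{remark_on_Ph} to justify that the homogeneous evolution lives on $\V_h$ with data $\mathbb{P}_h\u_0$. The only cosmetic difference is that the paper writes the representation explicitly with products of subdiagonal Pad\'e rational functions $r(\tau_l z)$ (citing also \cite{1998Eriksson} for the $A_h\u_{\tau h}$ bound), whereas you sketch the semigroup-type contour integral and defer the precise discrete form to the cited operator calculus, which is an equivalent level of detail.
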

\begin{proof}
The key step in proving this smoothing estimate, is the representation of the  solution on $I_m$ in form of the Dunford-Taylor integral, (cf.  \cite[p. 1321--1322]{1998Eriksson})
\begin{equation}
A_h \u_{\tau,m}^-= \frac{1}{2\pi i}\int_{\Gamma}\prod_{l=1}^m r(\tau_l z) A_h R(z, A_h)dz \mathbb{P}_h\u_0 \qquad \text{for }\;m=2,\dots,M,
\end{equation}
which is an operator equality on $\V_h$ since $\u_0$ is replaced by $\mathbb{P}_h\u_0 \in \V_h$, cf. \cref{remark_on_Ph}. Here, $r(z)$ is a subdiagonal Pad\`{e} approximation, which is rational function with the numerator of degree $w$ and denominator of degree $w+1$. The contour $\Gamma$ is a curve contained in the resolvent set of $A_h$ such that \cref{lemma:l2_resolvent_discrete} can be applied for $-z \in \Gamma$ and $R(z, A_h)$ the resolvent operator, i.e., $R(z,A_h) = (z - A_h)^{-1}$. Then the proof of
 \[
\norm{A_h \u_{{\tau}h}}_{L^{\infty}(I_m;L^2(\Omega))} \le  \frac{C}{t_m} \norm{\mathbb{P}_h\u_0}_{L^2(\Omega)}
 \]
 is the same as for \cite[Theorem 5.1]{1998Eriksson}. The estimates for the time derivative and for the jumps follow as in \cite[Theorem 4 and Theorem 5]{2017Leykekhman}.
\end{proof}

For the inhomogeneous problem (and $\u_0=0$) we obtain the discrete analog of \cref{proposition:max_reg}. On the continuous level the corresponding estimate is true for $1 < s < \infty$. The following discrete maximal parabolic regularity result covers the limit cases $s=1$ and $s=\infty$ at the expense of an logarithmic term.  
\begin{theorem}\label{chap:IS:corollary:maximal_regularity_discrete}
    Let $1 \leq s \le  \infty$, $\f \in L^s(I,L^2(\Omega)^d)$ and $\u_0 =0$. Let $\u_{{\tau}h}\in X^w_{\tau}(\V_h)$ be the solution to \cref{eq:fully_discrete_div_free}. Then for $s < \infty$ there holds
    \begin{multline}\label{eq:dmpr}
	\Bigg( \sum_{m=1}^M \norm{\partial_t \u_{{\tau}h}}^s_{L^s(I_m; L^2(\Omega))}\Bigg)^{1/s} + \norm{A_h \u_{{\tau}h}}_{L^s(I;L^2(\Omega))}\\%
	+ \Bigg( \sum^M_{m=1} {\tau}_m \norm{{\tau}_m^{-1} [\u_{{\tau}h}]_{m-1}}^s_{L^2(\Omega)} \Bigg)^{1/s} 
	\leq C \ln \frac{\finaltime}{{\tau}} \norm{\mathbb{P}_h\f}_{L^s(I; L^2(\Omega))}.
    \end{multline}
    For $s=\infty$ the estimate takes the form 
    \begin{equation}\label{eq:dmpr_infty}
	\max_{1\leq m \leq M} \norm{\partial_t \u_{{\tau}h}}_{L^\infty(I_m; L^2(\Omega))} + \norm{A_h \u_{{\tau}h}}_{L^\infty(I;L^2(\Omega))}%
	+ \max_{1\leq m \leq M} \norm{{\tau}_m^{-1} [\u_{{\tau}h}]_{m-1}}_{L^2(\Omega)}
	\leq C \ln \frac{\finaltime}{{\tau}} \norm{\mathbb{P}_h\f}_{L^\infty(I; L^2(\Omega))}.
    \end{equation}
    Here we have $[\u_{\tau h}]_0 = \u^+_{\tau h,0}$.
\end{theorem}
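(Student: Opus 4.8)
The plan is to reduce the fully discrete maximal parabolic regularity estimate for the Stokes operator $A_h$ to the corresponding result already established for the heat equation in \cite{2017Leykekhman}. As emphasized in the text preceding this theorem, the entire machinery of \cite{2017Leykekhman} rests on two ingredients: a sectorial resolvent estimate for the spatial operator in the $L^2(\Omega)$ norm, and the Dunford--Taylor operator calculus together with the subdiagonal Pad\'e representation of the discontinuous Galerkin time-stepping solution. Since \cref{lemma:l2_resolvent_discrete} provides exactly the resolvent estimate $\norm{(z+A_h)^{-1}\mathbb{P}_h\f}_{L^2(\Omega)} \le C/\abs{z+\nu}\,\norm{\f}_{L^2(\Omega)}$ for $z \in \Sigma_{\theta,-\nu}$, which is the verbatim analog of the estimate for $-\Delta_h$ used in the heat equation case, the strategy is to observe that $A_h$ satisfies all the structural hypotheses under which the proofs in \cite{2017Leykekhman} are carried out, and then to invoke those proofs line by line with $-\Delta_h$ replaced by $A_h$ and the projection onto the full finite element space replaced by the discrete Leray projection $\mathbb{P}_h$.

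Concretely, I would first record the contour representation of the solution. On each interval $I_m$, the discrete solution $\u_{\tau h}$ of the homogeneous problem admits a Dunford--Taylor integral representation built from products $\prod_{l} r(\tau_l z)$ of the Pad\'e symbol and the resolvent $R(z,A_h)=(z-A_h)^{-1}$, exactly as displayed in the proof of \cref{chap:IS:theorem:maximal_regularity_smoothing_discrete}. For the inhomogeneous problem with $\u_0=0$, the solution is obtained by a discrete Duhamel principle: $\u_{\tau h}$ is expressed as a sum over intervals of contour integrals acting on $\mathbb{P}_h \f$, again identical in form to the heat equation case once $\f$ is replaced by its discrete Leray projection in accordance with \cref{remark_on_Ph}. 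The next step is to insert the resolvent bound from \cref{lemma:l2_resolvent_discrete} into these representations and to estimate the three quantities on the left-hand side of \cref{eq:dmpr}: the piecewise time derivative, the operator norm $\norm{A_h \u_{\tau h}}$, and the scaled jump terms. The kernel estimates that produce the logarithmic factor $\ln(\finaltime/\tau)$ and allow the limit cases $s=1$ and $s=\infty$ come directly from the discrete operator calculus of \cite{2017Leykekhman} and depend only on the sectoriality constant $C_\theta$ and the mesh assumptions on the time partition.

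For the endpoint cases, I would treat $1<s<\infty$ and the extremes $s\in\{1,\infty\}$ together by first establishing a bound on the relevant discrete convolution kernel in a weak-type or $\ell^1$-in-time sense, whose summability constant scales like $\ln(\finaltime/\tau)$, and then interpolating. The estimate \cref{eq:dmpr_infty} for $s=\infty$ follows by taking the supremum over intervals in the same kernel bound, using that the number of intervals is controlled through the mesh regularity assumptions (2) and (3) on the time partition, which guarantee that the geometric sums arising from the products of Pad\'e factors converge with the stated logarithmic loss.

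The main obstacle, and the only place where genuine verification beyond a formal transcription is needed, is confirming that every estimate in \cite{2017Leykekhman} truly uses nothing about $-\Delta_h$ except its resolvent bound on the appropriate sector and its self-adjoint positive-definiteness. In particular, one must check that the presence of the Leray projection $\mathbb{P}_h$ does not interfere: since $A_h$ maps $\V_h$ into $\V_h$ and $\mathbb{P}_h$ is the $L^2$-orthogonal projection onto $\V_h$, the resolvent $R(z,A_h)$ and $\mathbb{P}_h$ commute on $\V_h$ and $\mathbb{P}_h$ is a contraction in $L^2(\Omega)$, so the compositions appearing in the contour integrals are bounded exactly as in the scalar case and the restriction to the divergence-free subspace is harmless. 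Once this is verified, the proof reduces to citing \cite[Theorems 4 and 5]{2017Leykekhman} with $A_h$ and $\mathbb{P}_h\f$ in place of $-\Delta_h$ and $\f$.
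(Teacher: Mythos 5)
Your proposal is correct and takes essentially the same route as the paper: the paper's proof simply invokes the homogeneous smoothing estimate of \cref{chap:IS:theorem:maximal_regularity_smoothing_discrete} (itself established via the Dunford--Taylor representation and the resolvent bound of \cref{lemma:l2_resolvent_discrete}) and then transcribes the Duhamel-type argument of \cite[Theorems 6--8]{2017Leykekhman} with $-\Delta_h$ replaced by $A_h$, which is exactly the reduction you describe, including your (correct and useful) check that $\mathbb{P}_h$ commutes with $R(z,A_h)$ on $\V_h$ and is an $L^2$-contraction. The only cosmetic discrepancy is your closing citation: the passage from smoothing to maximal regularity corresponds to \cite[Theorems 6--8]{2017Leykekhman}, whereas Theorems 4 and 5 there are the smoothing estimates themselves.
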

\begin{proof}
The result follows from the smoothing estimate in \cref{chap:IS:theorem:maximal_regularity_smoothing_discrete} as in the proof of \cite[Theorems 6-8]{2017Leykekhman}.
\end{proof}
\begin{remark}
Due to the stability of the discrete Leray projection $\mathbb{P}_h$ in $L^2$, we can drop it in the above estimates.
\end{remark}

\begin{remark}\label{remark:delta_est}
If we assume the domain $\Omega$ to be convex and the family of meshes $\{\Th\}$ to be shape regular and quasi-uniform then there holds
\[
\norm{\Delta_h \v_h}_{L^2(\Omega)} \le c \norm{A_h \v_h}_{L^2(\Omega)} \quad \forall \v_h \in \V_h
\]
by \cite[Lemma 4.1]{2008Guermond} or \cite[Corollary 4.4]{1982Heywood} and  therefore the corresponding estimates hold also for $\norm{\Delta_h \u_{{\tau}h}}_{L^s(I;L^2(\Omega))}$.
\end{remark}

For $s=2$ it is possible to get rid of the logarithmic term in \cref{eq:dmpr} similar to \cite[Theorem 4.6]{MeidnerVexler:2008I}.
\begin{theorem}
For $\f \in L^2(I,L^2(\Omega)^d)$ and $\u_0 =0$. Let $\u_{{\tau}h}\in X^w_{\tau}(\V_h)$ be the solution to \cref{eq:fully_discrete_div_free}. Then  there holds
\begin{multline}\label{eq:dmpr_s2}
	\Bigg( \sum_{m=1}^M \norm{\partial_t \u_{{\tau}h}}^2_{L^2(I_m; L^2(\Omega))}\Bigg)^{1/2} + \norm{A_h \u_{{\tau}h}}_{L^2(I;L^2(\Omega))}\\%
	+ \Bigg( \sum^M_{m=1} {\tau}_m \norm{{\tau}_m^{-1} [\u_{{\tau}h}]_{m-1}}^2_{L^2(\Omega)} \Bigg)^{1/2} 
	\leq C  \norm{\mathbb{P}_h\f}_{L^2(I; L^2(\Omega))}.
\end{multline}
\end{theorem}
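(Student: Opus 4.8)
The plan is to split \cref{eq:dmpr_s2} into two parts: the bound on $\norm{A_h \u_{\tau h}}_{L^2(I;L^2(\Omega))}$, obtained by a clean energy argument exploiting the $s=2$ Hilbert-space structure, and the bounds on the time derivative and the jumps, which follow as in \cite[Theorem 4.6]{MeidnerVexler:2008I}. First I would test \cref{eq:fully_discrete_div_free} with $\v_{\tau h} = A_h \u_{\tau h}$, which is admissible since $A_h$ maps $\V_h$ into $\V_h$ and hence $A_h \u_{\tau h} \in X^w_\tau(\V_h)$. Since $A_h$ is self-adjoint and positive definite on $\V_h$, commutes with differentiation and evaluation in time, and satisfies $(\nabla \z_h, \nabla \v_h) = (A_h \z_h, \v_h)$, the standard telescoping identity for $\mathfrak{B}$ applied to $A_h^{1/2}\u_{\tau h}$ gives
\[
\mathfrak{B}(\u_{\tau h}, A_h \u_{\tau h}) = \tfrac{1}{2}\norm{A_h^{1/2}\u_{\tau h,M}^-}^2_{L^2(\Omega)} + \tfrac{1}{2}\norm{A_h^{1/2}\u_{\tau h,0}^+}^2_{L^2(\Omega)} + \tfrac{1}{2}\sum_{m=1}^{M-1}\norm{A_h^{1/2}[\u_{\tau h}]_m}^2_{L^2(\Omega)} + \norm{A_h \u_{\tau h}}^2_{L^2(I;L^2(\Omega))}.
\]
On the other hand, since $\u_0=0$ and $A_h\u_{\tau h}(t) \in \V_h$, the right-hand side of \cref{eq:fully_discrete_div_free} equals $(\mathbb{P}_h\f, A_h\u_{\tau h})_{I\times\Omega}$, which by Cauchy--Schwarz and Young's inequality is at most $\tfrac12\norm{\mathbb{P}_h\f}^2_{L^2(I;L^2(\Omega))} + \tfrac12\norm{A_h\u_{\tau h}}^2_{L^2(I;L^2(\Omega))}$. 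Absorbing the last term yields $\norm{A_h\u_{\tau h}}_{L^2(I;L^2(\Omega))} \le \norm{\mathbb{P}_h\f}_{L^2(I;L^2(\Omega))}$, i.e. the middle term of \cref{eq:dmpr_s2} with no logarithmic factor (and, as a by-product, control of the $A_h^{1/2}$-weighted jumps).

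For the remaining two terms I would move $A_h\u_{\tau h}$ to the right-hand side and observe that $\u_{\tau h}$ is the pure discontinuous Galerkin time-stepping solution of $\partial_t\u = \g$ with $\g := \mathbb{P}_h\f - A_h\u_{\tau h}$, where $\norm{\g}_{L^2(I;L^2(\Omega))} \le 2\norm{\mathbb{P}_h\f}_{L^2(I;L^2(\Omega))}$ by the first step. It then suffices to establish the $L^2$-stability of the dG time-derivative operator,
\[
\Bigg(\sum_{m=1}^M \norm{\partial_t\u_{\tau h}}^2_{L^2(I_m;L^2(\Omega))}\Bigg)^{1/2} + \Bigg(\sum_{m=1}^M \tau_m^{-1}\norm{[\u_{\tau h}]_{m-1}}^2_{L^2(\Omega)}\Bigg)^{1/2} \le C\norm{\g}_{L^2(I;L^2(\Omega))},
\]
which combined with the previous step delivers the first and third terms of \cref{eq:dmpr_s2}. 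This estimate is proved as in \cite[Theorem 4.6]{MeidnerVexler:2008I}, by working interval by interval, rescaling to the reference interval, and using the equivalence of norms on the finite-dimensional polynomial-in-time spaces together with the $L^2(I)$-stability of the state $\u_{\tau h}$ itself (the latter following from testing $\mathfrak{B}(\u_{\tau h},\u_{\tau h})$ and invoking the same coercivity identity).

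I expect the main obstacle to be precisely this last stability estimate. A naive interval-wise energy argument --- testing the local equation with $\partial_t\u_{\tau h}$ and then with the constant-in-time function $[\u_{\tau h}]_{m-1}$ --- couples $\norm{\partial_t\u_{\tau h}}_{L^2(I_m;L^2(\Omega))}$ and $\tau_m^{-1/2}\norm{[\u_{\tau h}]_{m-1}}_{L^2(\Omega)}$ through an inverse inequality in time whose constant need not be smaller than one, so that Young's inequality alone cannot decouple the two quantities. The resolution, following Meidner--Vexler, is to treat the local dG solution operator as a bounded map on the finite-dimensional space of polynomials on the reference interval --- where all norms are equivalent --- keeping careful track of the powers of $\tau_m$ under scaling, and then to sum over the intervals using the already established $L^2$-stability of $\u_{\tau h}$. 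It is worth emphasizing that, in contrast to the general range $1\le s\le\infty$ treated in \cref{chap:IS:corollary:maximal_regularity_discrete}, it is exactly the Hilbert-space structure at $s=2$ that makes the energy identity of the first step available and thereby removes the logarithmic factor $\ln(\finaltime/\tau)$.
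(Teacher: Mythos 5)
Your proposal is correct and follows essentially the same route as the paper, whose proof consists of invoking the argument of \cite[Theorem 4.6]{MeidnerVexler:2008I} with $-\Delta_h$ replaced by $A_h$: the energy identity from testing with $A_h\u_{\tau h}$ removes the logarithm, and the time-derivative and jump bounds come from the local reference-interval, norm-equivalence dG stability argument of Meidner--Vexler, which is exactly the technical core you also defer to. One minor remark: after your reduction to the trivial equation $\partial_t\u=\g$ with $\g=\mathbb{P}_h\f-A_h\u_{\tau h}$, the incoming value $\u_{\tau h,m-1}^-$ lies in the kernel of both $\partial_t$ and the jump on each interval, so the derivative and jumps depend only on $\g|_{I_m}$ and the $L^2(I)$-stability of $\u_{\tau h}$ itself that you invoke is not actually needed.
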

\begin{proof}
The proof follows the lines of the corresponding proof in \cite{MeidnerVexler:2008I} replacing $-\Delta_h$ by $A_h$.	
\end{proof}	
\section{Best approximation type estimates}\label{sec:bestapproximation}

The results in \cref{sec:maximal_regularity} allow us to show an $L^{\infty}(I; L^2(\Omega))$ best approximation type  error estimates for the velocity. 
In order to state the results, we need to introduce an analogue of the Ritz projection for the stationary Stokes problem  $(R_h^S(\w,\varphi), R_h^{S,p}(\w,\varphi))\in \X_h \times M_h$ of $(\w,\varphi)\in H^1_0(\Omega)^d\times L^2(\Omega)$ given by the relation \sbox0{\cref{chap:IS:eq:stokes_projection_1,chap:IS:eq:stokes_projection_2}}
\begin{subequations}\label{chap:IS:eq:stokes_projection}
\begin{align}
    (\nabla (\w - R_h^S(\w,\varphi)),\nabla \v_h) - (\varphi-R_h^{S,p}(\w,\varphi), \nabla \cdot \v_h) &= 0, \qquad \forall \v_h \in \X_h,\label{chap:IS:eq:stokes_projection_1}\\
    (\nabla \cdot (\w - R_h^S(\w,\varphi)), q_h) &=0, \qquad \forall q_h \in M_h.\label{chap:IS:eq:stokes_projection_2}
\end{align}
\end{subequations}
\begin{remark}\label{remark:RitzStokesProjection}
If $\w$ is discrete divergence free, i.e., $(\nabla\cdot\w,q_h)=0$ for all $q_h\in M_h$, then we have $R_h^S(\w,\varphi) \in \V_h$. We will use this projection operator only for such $\w$. In this case the same projection operator is defined, e.g., in~\cite{Girault:2015}.
\end{remark}
In the following, we will make the same assumptions on the data $\f$ and $\u_0$  as in \cref{theorem:weak_with_preasure} in order to use the Galerkin orthogonality relation from \cref{prop:Galerkin_orthogonality}.
\begin{theorem}\label{chap:IS:theorem:L2Linfty_stability}
	Let $\f \in L^s(I;L^2(\Omega)^d)$ for some $s>1$ and $\u_0 \in \V^0_{1-\frac{1}{s}}$.
    Let $(\u,p)$ be the solution of \cref{eq: weak stokes with pressure} and $(\u_{{\tau}h},p_{\tau h})$ solve the respective finite element problem \cref{eq:spacetime_discretization}. Then, there holds
    \begin{equation}
	\norm{\u_{{\tau}h}}_{L^{\infty}(I;L^2(\Omega))} \leq C \ln \frac{\finaltime}{{\tau}} \Big(\norm{\u}_{L^{\infty}(I;L^2(\Omega))} + \norm{\u - R_h^S \u}_{L^{\infty}(I;L^2(\Omega))}\Big).
    \end{equation}
\end{theorem}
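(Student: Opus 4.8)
The plan is to prove this stability estimate by combining the best-approximation philosophy with the discrete maximal parabolic regularity established in \cref{chap:IS:corollary:maximal_regularity_discrete} and the smoothing estimate of \cref{chap:IS:theorem:maximal_regularity_smoothing_discrete}. First I would fix an arbitrary time level and seek to bound $\norm{\u_{\tau h}(t^*)}_{L^2(\Omega)}$ by a duality argument. The natural device is to introduce a discrete backward-in-time dual problem: using the dual representation \cref{eq:dualB} of the bilinear form $B$, I would define a dual solution $(\vphi_{\tau h}, \psi_{\tau h}) \in X^w_{\tau}(\X_h \times M_h)$ driven by a suitable right-hand side concentrated near $t^*$ (for instance a regularized Dirac in time tensored with $\u_{\tau h}(t^*)$, or more simply the indicator of the interval $I_{m^*}$ containing $t^*$). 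The point of the dual problem is that testing it against $\u_{\tau h}$ reproduces (up to controllable terms) the quantity $\norm{\u_{\tau h}}_{L^\infty(I;L^2(\Omega))}$, while testing it against the Galerkin orthogonality relation \cref{eq:Galerkin_orthogonality} lets me replace $\u_{\tau h}$ by the continuous solution $\u$.

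The key computational step is the following. By Galerkin orthogonality I can insert the Ritz--Stokes projection $R_h^S\u$ for free, since the dual test function lies in the discrete space and $R_h^S\u$ is discretely divergence free (\cref{remark:RitzStokesProjection}). This converts the error into a pairing of $\u - R_h^S\u$ (and $\u$ itself, through the jump and initial terms) against the dual solution and its discrete time derivative and discrete Laplacian. I would then apply Hölder's inequality in time, pairing the $L^\infty(I;L^2(\Omega))$ norm of $\u$ and of $\u - R_h^S\u$ against the $L^1(I;L^2(\Omega))$-type norms of $\partial_t \vphi_{\tau h}$, $A_h \vphi_{\tau h}$, and the scaled jumps $\tau_m^{-1}[\vphi_{\tau h}]_{m-1}$. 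This is precisely where the discrete maximal parabolic regularity estimate \cref{eq:dmpr} enters, applied in the limiting case $s=1$ to the dual problem: it bounds exactly these three quantities by $C\ln\frac{\finaltime}{\tau}$ times the $L^1(I;L^2(\Omega))$ norm of the dual data, which by construction is of order one in the relevant norm. The logarithmic factor $\ln\frac{\finaltime}{\tau}$ in the statement is therefore inherited directly from the $s=1$ endpoint of discrete maximal regularity.

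The main obstacle I anticipate is the careful treatment of the dual problem at the endpoint $s=1$ together with the jump and initial terms in $B$. Because the discrete scheme is nonconforming (the trial space is not contained in $L^2(I;\V^1)$), one must track the interface jumps $[\u_{\tau h}]_{m-1}$ and the initial term $(\u_0^+, \v_0^+)$ honestly through the integration by parts that produces \cref{eq:dualB}; these contribute the jump terms appearing in \cref{eq:dmpr}, and one needs the smoothing estimate \cref{eq:smoothing_estimate} (or the homogeneous part of the analysis) to handle the contribution of the initial data $\u_0$ cleanly. A secondary technical point is choosing the dual right-hand side so that its $L^1(I;L^2(\Omega))$ norm is controlled while the pairing against $\u_{\tau h}$ recovers the pointwise-in-time $L^2$ norm; a standard approach is to use a delta-sequence in time and pass to the limit, or to use the $L^2$-in-space dual of $\u_{\tau h}(t^*)$ as the spatial data and a sharply localized temporal weight.

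Once these pairings are assembled, the estimate follows by collecting the $\norm{\u}_{L^\infty(I;L^2(\Omega))}$ and $\norm{\u - R_h^S\u}_{L^\infty(I;L^2(\Omega))}$ factors, using the stability of $P_\tau$ from \cref{eq:P_tau_stability} where projections in time appear, and taking the supremum over $t^*$. I expect the only place where additional care is needed beyond bookkeeping is verifying that the discrete dual problem is well posed and that \cref{chap:IS:corollary:maximal_regularity_discrete} applies verbatim to it; since the bilinear form is symmetric in structure under time reversal, the dual problem is again a discontinuous Galerkin Stokes problem of the same type, so the machinery of \cref{sec:maximal_regularity} transfers directly.
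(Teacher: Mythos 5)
Your proposal follows essentially the same route as the paper's proof: a duality argument with a regularized Dirac in time tensored with $\u_{\tau h}(\tilde t)$, the Galerkin orthogonality of \cref{prop:Galerkin_orthogonality} to pass to $(\u,p)$, insertion of the Stokes Ritz projection $R_h^S$ using the discrete divergence-freeness of the discrete dual solution, H\"older pairing of $L^\infty$ against $L^1$ norms, and the $s=1$ case of the discrete maximal parabolic regularity \cref{chap:IS:corollary:maximal_regularity_discrete} applied to the discrete dual problem, which supplies the factor $\ln\frac{\finaltime}{\tau}$. The only inessential deviations are your suggestion of an interval indicator as an alternative temporal weight and your worry about needing the smoothing estimate \cref{eq:smoothing_estimate} for the initial data, which in fact is not required since the dual problem has homogeneous terminal data and the primal initial term is absorbed into the jump terms handled by H\"older.
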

\begin{proof}
	We proceed with a proof along the arguments of \cite[Theorem 1]{2016Leykekhmana}.
    Let $\tilde t \in (0,\finaltime]$ and without loss of generality assume $\tilde t \in (t_{M-1},\finaltime]$. %

    Consider the following dual  problem
    \begin{subequations}
	\begin{alignat}{3}
	    -\partial_t \g(t,\x) - \Delta \g(t,\x) + \nabla \lambda(t,\x) &=\u_{{\tau}h}(\tilde t,\x)\theta(t) , \qquad && (t,\x) \in I\times \Omega,\\
	    \nabla \cdot \g(t,\x) &= 0, \qquad && (t,\x) \in I\times \Omega,\\
	    \g(t,\x) &= 0, \qquad &&(t,\x) \in I\times \partial\Omega,\\
	    \g(\finaltime,x) &= 0, \qquad && \x \in \Omega.
	\end{alignat}
    \end{subequations}
    Here, $\theta \in C^1(I)$ is a regularized Delta function (cf. \cite[Appendix A.5]{1995Schatz}) in time with the following properties:
    \begin{equation}\label{theta_def}
	\supp \theta \subset (t_{M-1},\finaltime), \qquad \norm{\theta}_{L^1(I_M)} \leq C \qquad\text{ and }\qquad (\theta, \v_{\tau})_{I_M} = \v_{\tau}(\tilde t) \quad \forall \v_{\tau} \in \mathcal{P}_w(I_M). \label{eq:delta_t}
    \end{equation}
    Note that the authors in \cite[Appendix A.5]{1995Schatz} assume $\tilde t$ to be an element of an open interval but the argument there can be extended to the case $\tilde t = \finaltime$.
    The corresponding finite element approximation $(\g_{{\tau}h}, \lambda_{{\tau}h}) \in X^w_{\tau}(\X_h\times M_h)$ is given by
    \begin{equation}
    B((\v_{{\tau}h},q_{{\tau}h}),(\g_{{\tau}h},\lambda_{{\tau}h})) = \left(\u_{\tau h}(\tilde t) \theta,\v_{{\tau}h}\right)_{I \times \Omega} \qquad \forall(\v_{{\tau}h},q_{{\tau}h})\in X^w_{\tau}(\X_h\times M_h).
    \end{equation}
By the Galerkin orthogonality from \cref{prop:Galerkin_orthogonality} we have
    \begin{align}
	\stwonorm{\u_{{\tau}h}(\tilde t)} 
	&= (\u_{{\tau}h}, \theta(t) \u_{{\tau}h}(\tilde t))
			      = B((\u_{{\tau}h},p_{{\tau}h}),(\g_{{\tau}h},\lambda_{{\tau}h})) = B((\u,p),(\g_{{\tau}h},\lambda_{{\tau}h})) \\
			      &= -\sum_{m=1}^M(\u,\partial_t\g_{{\tau}h})_{I_m \times \Omega} + (\nabla \u, \nabla \g_{{\tau}h})_{I\times \Omega} - (p, \nabla \cdot \g_{{\tau}h}) - \sum_{m=1}^M(\u_m^-,[\g_{{\tau}h}]_m)_{\Omega}\\
			      &= J_1 + J_2 + J_3 + J_4,
    \end{align}
    where we have used the dual representation of the bilinear form $B$ from \cref{eq:dualB}.
    In the last sum we set $\g_{{\tau}h,M+1}=0$ so that $[\g_{{\tau}h}]_M= - \g_{{\tau}h,M}$.   
    Applying the H\"older's inequality and using $\u \in C(\bar I;L^2(\Omega)^d)$, we obtain   
    \begin{align}
	J_1 &\leq \sum_{m=1}^M \norm{\u}_{L^{\infty}(I_m;L^2(\Omega))} \norm{\partial_t \g_{{\tau}h}}_{L^1(I_m;L^2(\Omega))}
	    \leq \norm{\u}_{L^{\infty}(I;L^2(\Omega))}\sum_{m=1}^M  \norm{\partial_t \g_{{\tau}h}}_{L^1(I_m;L^2(\Omega))}, \\
	J_4 &\leq \sum_{m=1}^{M} \twonorm{\u_m^-}\twonorm{[\g_{{\tau}h}]_{m-1}} \leq \norm{\u}_{L^{\infty}(I;L^2(\Omega))} \sum_{m=1}^{M} \twonorm{[\g_{{\tau}h}]_{m}}.
    \end{align} 
    For $J_2+J_3$ we can argue by using the projection $R_h^S$ defined in \cref{chap:IS:eq:stokes_projection}.
    Then, we have
    \begin{align}
	J_2 + J_3 &=(\nabla \u, \nabla \g_{{\tau}h})_{I\times \Omega} - (p,\nabla \cdot \g_{{\tau}h})_{I\times\Omega}\\
		  &=(\nabla R_h^S(\u,p), \nabla \g_{{\tau}h})_{I\times \Omega} - (R_h^{S,p}(\u,p),\nabla \cdot \g_{{\tau}h})_{I\times\Omega}
		  =(\nabla R_h^S(\u,p), \nabla \g_{{\tau}h})_{I\times \Omega},
    \end{align}
    where the last term vanishes, since $\g_{{\tau}h}$ is discretely divergence-free. Here and in what follows, the projection $(R_h^S,R_h^{S,p})$ is applied to time dependent functions $(\u,p)$ pointwise in time. Since $\nabla \cdot \u(t) = 0$ for almost all $t \in I$ we have $R_h^S(\u(t),p(t)) \in \V_h$, cf. \cref{remark:RitzStokesProjection}.
    With this we can use the definition of the discrete Stokes operator $A_h$ resulting in 
    \begin{equation}\label{eq:est_R_h_g_h}
    \begin{aligned}
	(\nabla R_h^S(\u,p), \nabla \g_{{\tau}h}&)_{I\times \Omega} 
	= (R_h^S(\u,p), A_h \g_{{\tau}h})_{I\times \Omega}\\
	\leq& \left(\norm{\u}_{L^{\infty}(I;L^2(\Omega))} + \norm{\u - R_h^S (\u,p)}_{L^{\infty}(I;L^2(\Omega))}\right) \norm{A_h\g_{{\tau}h}}_{L^1(I;L^2(\Omega))}.
    \end{aligned}
    \end{equation}
    Combining the estimates, we conclude    
    \begin{align}
	\stwonorm{\u_{{\tau}h}(\tilde t)} 
	&=-\sum_{m=1}^M(\u,\partial_t\g_{{\tau}h})_{I_m \times \Omega} + (\nabla R^S_h(\u,p), \nabla \g_{{\tau}h})_{I\times \Omega} - \sum_{m=1}^M(\u_m^-,[\g_{{\tau}h}]_m)_{\Omega} \\
	&\leq C \Big(\norm{\u}_{L^{\infty}(I;L^2(\Omega))} +  \norm{\u - R_h^S(\u,p)}_{L^{\infty}(I;L^2(\Omega))}\Big) \\
	&\qquad \times\Bigg( \sum_{m=1}^M\norm{\partial_t\g_{{\tau}h}}_{L^1(I_m;L^2(\Omega))} + \norm{A_h \g_{{\tau}h}}_{L^1(I;L^2(\Omega))} 
	 + \sum_{m=1}^M \norm{[\g_{{\tau}h}]_m}_{L^2(\Omega)}\Bigg)
    \end{align}
    and an application of \cref{chap:IS:corollary:maximal_regularity_discrete} leads to
    \begin{align}
    	\stwonorm{\u_{{\tau}h}(\tilde t)}
	\leq C \ln \frac{\finaltime}{{\tau}} \Big(\norm{\u}_{L^{\infty}(I;L^2(\Omega))} +  \norm{\u - R_h^S(\u,p)}_{L^{\infty}(I;L^2(\Omega))}\Big)
	 \times\twonorm{\u_{{\tau}h}(\tilde t)}\norm{\theta}_{L^1(I_M)}.
    \end{align}
    Canceling and using that $\norm{\theta}_{L^1(I_M)}\le C$ we complete the proof of the theorem.
\end{proof}

The following corollary provides a version of \cref{chap:IS:theorem:L2Linfty_stability} involving the $L^2$ projection in time $P_\tau$ defined in \cref{eq:P_tau}.
\begin{corollary}\label{corollary:stability_with_P_tau}
Under the conditions of \cref{chap:IS:theorem:L2Linfty_stability} there holds
\[
\norm{\u_{{\tau}h}}_{L^{\infty}(I;L^2(\Omega))} \leq C \ln \frac{\finaltime}{{\tau}} \Big(\norm{\u}_{L^{\infty}(I;L^2(\Omega))} + \norm{\u - P_\tau R_h^S \u}_{L^{\infty}(I;L^2(\Omega))}\Big).
\]
\end{corollary}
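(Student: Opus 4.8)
The plan is to repeat the proof of \cref{chap:IS:theorem:L2Linfty_stability} almost verbatim, changing only the single step in which the Ritz--Stokes projection enters. Concretely, I would set up the identical dual problem with the regularized delta function $\theta$, use the Galerkin orthogonality of \cref{prop:Galerkin_orthogonality} to write $\stwonorm{\u_{{\tau}h}(\tilde t)} = B((\u,p),(\g_{{\tau}h},\lambda_{{\tau}h}))$, and split the dual representation of $B$ into the four terms $J_1,\dots,J_4$. The terms $J_1$ and $J_4$, involving $\partial_t\g_{{\tau}h}$ and the jumps $[\g_{{\tau}h}]_m$, are estimated exactly as before against $\norm{\u}_{L^{\infty}(I;L^2(\Omega))}$, so they remain untouched. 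The only modification concerns $J_2+J_3$, which after inserting the Ritz--Stokes projection equals $(\nabla R_h^S(\u,p), \nabla \g_{{\tau}h})_{I\times\Omega} = (R_h^S(\u,p), A_h\g_{{\tau}h})_{I\times\Omega}$, i.e.\ the step \cref{eq:est_R_h_g_h}.

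The key observation is that the discrete dual velocity $\g_{{\tau}h}$ is discretely divergence free, hence $\g_{{\tau}h} \in X^w_{\tau}(\V_h)$, and since $A_h$ acts pointwise in time, maps $\V_h$ into $\V_h$, and does not raise the polynomial degree in time, we get $A_h\g_{{\tau}h} \in X^w_{\tau}(\V_h) \subset X^w_{\tau}(L^2(\Omega)^d)$. Therefore the defining orthogonality of the temporal $L^2$ projection \cref{eq:P_tau} allows us to insert $P_\tau$ at no cost:
\[
(R_h^S(\u,p), A_h\g_{{\tau}h})_{I\times\Omega} = (P_\tau R_h^S(\u,p), A_h\g_{{\tau}h})_{I\times\Omega}.
\]
A Cauchy--Schwarz inequality in space together with H\"older's inequality in time bounds the right-hand side by $\norm{P_\tau R_h^S(\u,p)}_{L^{\infty}(I;L^2(\Omega))}\,\norm{A_h\g_{{\tau}h}}_{L^1(I;L^2(\Omega))}$, and a single triangle inequality
\[
\norm{P_\tau R_h^S(\u,p)}_{L^{\infty}(I;L^2(\Omega))} \le \norm{\u}_{L^{\infty}(I;L^2(\Omega))} + \norm{\u - P_\tau R_h^S \u}_{L^{\infty}(I;L^2(\Omega))}
\]
replaces the factor $\norm{\u}+\norm{\u - R_h^S \u}$ appearing in \cref{eq:est_R_h_g_h} by $\norm{\u}+\norm{\u - P_\tau R_h^S \u}$, as desired.

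From here the proof closes exactly as in \cref{chap:IS:theorem:L2Linfty_stability}: collecting $J_1,\dots,J_4$ and applying the discrete maximal parabolic regularity estimate \cref{chap:IS:corollary:maximal_regularity_discrete} to the dual solution $\g_{{\tau}h}$ produces the factor $C\ln\frac{\finaltime}{{\tau}}\,\twonorm{\u_{{\tau}h}(\tilde t)}$, after which one cancels one power of $\twonorm{\u_{{\tau}h}(\tilde t)}$ and uses $\norm{\theta}_{L^1(I_M)}\le C$. I do not expect a genuine obstacle; the only point requiring care is the justification that $A_h\g_{{\tau}h}$ really lies in the temporal test space $X^w_{\tau}(L^2(\Omega)^d)$, which hinges on $\g_{{\tau}h}$ being discretely divergence free (so that $A_h\g_{{\tau}h}$ is well defined) and on $A_h$ preserving the polynomial degree in time, exactly what legitimizes the insertion of $P_\tau$.
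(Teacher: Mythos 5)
Your proposal is correct and coincides with the paper's own proof: the authors likewise repeat the argument of \cref{chap:IS:theorem:L2Linfty_stability} verbatim and modify only the step \cref{eq:est_R_h_g_h}, inserting $P_\tau$ via the orthogonality \cref{eq:P_tau} because $A_h\g_{{\tau}h}$ lies in the discrete temporal test space, and then concluding with the triangle inequality. Your explicit justification that $A_h\g_{{\tau}h}\in X^w_{\tau}(\V_h)$ (since $A_h$ acts pointwise in time on the discretely divergence-free $\g_{{\tau}h}$ without raising the polynomial degree) is exactly the point the paper leaves implicit.
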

\begin{proof}
We obtain this by arguing as in the proof of \cref{chap:IS:theorem:L2Linfty_stability}, only the term in \cref{eq:est_R_h_g_h} is estimated differently
\[
\begin{aligned}
(\nabla R_h^S(\u,p), \nabla \g_{{\tau}h})_{I\times \Omega} 
&= (R_h^S(\u,p), A_h \g_{{\tau}h})_{I\times \Omega}
= (P_\tau R_h^S(\u,p), A_h \g_{{\tau}h})_{I\times \Omega}\\
&\leq \Big(\norm{\u}_{L^{\infty}(I;L^2(\Omega))} + \norm{\u - P_\tau R_h^S (\u,p)}_{L^{\infty}(I;L^2(\Omega))}\Big)
\norm{A_h\g_{{\tau}h}}_{L^1(I;L^2(\Omega))},
\end{aligned}
\]
where we have used the definition \cref{eq:P_tau} of $\P_\tau$.
\end{proof}

As a corollary from \cref{chap:IS:theorem:L2Linfty_stability} we obtain a best approximation type result.
\begin{corollary}
    \label{chap:IS:corr_best_approx}
    Under the conditions of \cref{chap:IS:theorem:L2Linfty_stability} there holds
    \begin{align}
	\norm{\u-\u_{{\tau}h}}_{L^{\infty}(I;L^2(\Omega))} 
	\leq C \ln \frac{\finaltime}{{\tau}} \Bigg(\inf_{\v_{\tau h} \in X^w_\tau(\V_h)}\norm{\u-\v_{{\tau}h}}_{L^{\infty}(I;L^2(\Omega))} +  \norm{\u - R_h^S (\u,p)}_{L^{\infty}(I;L^2(\Omega))} \Bigg). \label{chap:IS:eq:convergence}
    \end{align}
\end{corollary}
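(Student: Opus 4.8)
The plan is to reduce the best approximation estimate \cref{chap:IS:eq:convergence} to the stability bound of \cref{chap:IS:theorem:L2Linfty_stability} by a triangle inequality combined with a shift. I fix an arbitrary $\v_{\tau h} \in X^w_\tau(\V_h)$ and split $\u - \u_{\tau h} = (\u - \v_{\tau h}) + (\v_{\tau h} - \u_{\tau h})$. The first summand is already of the desired form, so the entire task is to bound $\norm{\v_{\tau h} - \u_{\tau h}}_{L^{\infty}(I;L^2(\Omega))}$ by the right-hand side of \cref{chap:IS:eq:convergence}.

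The key observation I would make is that $\u_{\tau h} - \v_{\tau h} \in X^w_\tau(\V_h)$ plays exactly the role of the discrete solution associated with the ``continuous'' function $\u - \v_{\tau h}$. Indeed, their difference $(\u - \v_{\tau h}) - (\u_{\tau h} - \v_{\tau h}) = \u - \u_{\tau h}$ is Galerkin orthogonal to $X^w_\tau(\X_h \times M_h)$ by \cref{eq:Galerkin_orthogonality}, which is precisely the orthogonality exploited in the proof of \cref{chap:IS:theorem:L2Linfty_stability}. Moreover, since $\u$ is divergence free and $\v_{\tau h}$ is discretely divergence free, the function $\u - \v_{\tau h}$ is again discretely divergence free, so $R_h^S(\u - \v_{\tau h}, p) \in \V_h$ by \cref{remark:RitzStokesProjection}. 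Hence the dual argument of \cref{chap:IS:theorem:L2Linfty_stability} can be rerun with $\u_{\tau h}$ replaced by $\u_{\tau h} - \v_{\tau h}$ and $\u$ replaced by $\u - \v_{\tau h}$, keeping the same pressure $p$: testing the discrete dual problem with right-hand side $(\u_{\tau h} - \v_{\tau h})(\tilde t)\theta$ against $\u_{\tau h} - \v_{\tau h}$, applying Galerkin orthogonality, the dual representation \cref{eq:dualB}, the Ritz--Stokes identity \cref{chap:IS:eq:stokes_projection_1} to combine $(\nabla(\u-\v_{\tau h}),\nabla\g_{\tau h}) - (p,\nabla\cdot\g_{\tau h})$ into a term in $A_h\g_{\tau h}$, and finally \cref{chap:IS:corollary:maximal_regularity_discrete} for the (time-reversed) dual problem yields $\norm{\u_{\tau h} - \v_{\tau h}}_{L^{\infty}(I;L^2(\Omega))} \le C\ln\frac{\finaltime}{\tau}\big(\norm{\u - \v_{\tau h}}_{L^{\infty}(I;L^2(\Omega))} + \norm{(\u - \v_{\tau h}) - R_h^S(\u - \v_{\tau h}, p)}_{L^{\infty}(I;L^2(\Omega))}\big)$.

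The simplification that makes this clean, which I would state explicitly, is the linearity of the Ritz--Stokes projection together with $R_h^S(\v_{\tau h}, 0) = \v_{\tau h}$ for $\v_{\tau h} \in \V_h$ (immediate from \cref{chap:IS:eq:stokes_projection}): therefore $R_h^S(\u - \v_{\tau h}, p) = R_h^S(\u, p) - \v_{\tau h}$, the shift cancels, and $\norm{(\u - \v_{\tau h}) - R_h^S(\u - \v_{\tau h}, p)}_{L^{\infty}(I;L^2(\Omega))} = \norm{\u - R_h^S(\u, p)}_{L^{\infty}(I;L^2(\Omega))}$. Inserting this into the triangle inequality gives $\norm{\u - \u_{\tau h}}_{L^{\infty}(I;L^2(\Omega))} \le C\ln\frac{\finaltime}{\tau}\big(\norm{\u - \v_{\tau h}}_{L^{\infty}(I;L^2(\Omega))} + \norm{\u - R_h^S(\u, p)}_{L^{\infty}(I;L^2(\Omega))}\big)$, and taking the infimum over $\v_{\tau h} \in X^w_\tau(\V_h)$ finishes the proof.

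The main obstacle is conceptual rather than computational: one cannot invoke \cref{chap:IS:theorem:L2Linfty_stability} as a black box, since $\u_{\tau h} - \v_{\tau h}$ is not the discrete solution of a genuine transient Stokes problem ($\u - \v_{\tau h}$ solves no PDE and is discontinuous in time). I expect the delicate points to be verifying that the stability proof survives the substitution, namely that it uses only the Galerkin orthogonality \cref{eq:Galerkin_orthogonality}, the discrete divergence-freeness of $\u - \v_{\tau h}$, and the mere boundedness (not continuity) of the one-sided traces $(\u - \v_{\tau h})_m^-$, and that the pressure $p$ may be carried unchanged through the dual computation so that the Ritz--Stokes cancellation above is legitimate.
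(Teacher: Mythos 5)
Your proof is correct and follows essentially the same route as the paper: shift by an arbitrary $\v_{\tau h} \in X^w_\tau(\V_h)$, rerun the stability argument of \cref{chap:IS:theorem:L2Linfty_stability} for $(\u-\v_{\tau h}, \u_{\tau h}-\v_{\tau h})$ using the Galerkin orthogonality \cref{eq:Galerkin_orthogonality}, exploit $R_h^S(\u-\v_{\tau h},\cdot) = R_h^S(\u,p)-\v_{\tau h}$, and finish with the triangle inequality and the infimum. The only (inessential) difference is that the paper also shifts the pressure by an arbitrary $q_{\tau h} \in X^w_\tau(M_h)$, whereas you carry $p$ unchanged, i.e., take $q_{\tau h}=0$.
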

\begin{proof}
The desired result follows by
    considering $(\u -\v_{{\tau}h},p-q_{{\tau}h})$ instead of $(\u,p)$ with arbitrary $(\v_{{\tau}h},q_{{\tau}h}) \in X^w_{\tau}(\V_h \times M_h)$ in the proof of \cref{chap:IS:theorem:L2Linfty_stability}. This allows us to replace $(\u_{\tau h}, p_{\tau h})$
by $(\u_{\tau h} -\v_{{\tau}h},p_{\tau h}-q_{{\tau}h})$.  

Note, that $\u-\v_{\tau h}$ is discrete divergence free and so $R_h^S(\u-\v_{\tau h},p- q_{\tau h})(t)\in \V_h$ for almost all $t \in I$, see \cref{remark:RitzStokesProjection}. Therefore, the argument in the proof of \cref{chap:IS:theorem:L2Linfty_stability} involving the discrete Stokes operator $A_h$ is still valid. Moreover, by the definition of the projection $R_h^S$ we have
    \[
    R_h^S(\u-\v_{\tau h},p- q_{\tau h}) = R_h^S(\u,p) - \v_{\tau h}.
    \]
    As in the proof of \cref{chap:IS:theorem:L2Linfty_stability} we obtain then
    \[
    \norm{\u_{{\tau}h}-\v_{{\tau}h}}_{L^{\infty}(I;L^2(\Omega))} \le C \ln \frac{\finaltime}{\tau} \left( \norm{\u-\v_{{\tau}h}}_{L^{\infty}(I;L^2(\Omega))} 
     + \norm{\u - R_h^S(\u,p)}_{L^{\infty}(I;L^2(\Omega))}\right).
    \]
    We complete the proof using
    \[
    \u-\u_{{\tau}h} = \u-\v_{{\tau}h} + \v_{{\tau}h} -\u_{{\tau}h}
    \]
    and the triangle inequality.
\end{proof}

The error estimate in the next section is based on the following variant of our result.
\begin{corollary}\label{corollary:best_app_P_tau_R_h}
Under the conditions of \cref{chap:IS:theorem:L2Linfty_stability} there holds
\[
\norm{\u-\u_{{\tau}h}}_{L^{\infty}(I;L^2(\Omega))} \le C \ln \frac{\finaltime}{{\tau}} \norm{\u - P_\tau R_h^S (\u,p)}_{L^{\infty}(I;L^2(\Omega))}.
\]
\end{corollary}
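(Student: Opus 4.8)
The plan is to repeat the argument of \cref{chap:IS:corr_best_approx}, but to base it on the $P_\tau$-version of the stability estimate from \cref{corollary:stability_with_P_tau} rather than on \cref{chap:IS:theorem:L2Linfty_stability}, and then to exploit the single distinguished discrete function $P_\tau R_h^S(\u,p)$. I first note that this function is admissible as a comparison element: since $\nabla\cdot\u(t)=0$ for almost all $t\in I$, \cref{remark:RitzStokesProjection} gives $R_h^S(\u(t),p(t))\in\V_h$, and the $L^2$-in-time projection $P_\tau$ from \cref{eq:P_tau} keeps the values in the fixed finite-dimensional space $\V_h$, so that $P_\tau R_h^S(\u,p)\in X^w_{\tau}(\V_h)$.

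Next I would fix an arbitrary pair $(\v_{\tau h},q_{\tau h})\in X^w_{\tau}(\V_h\times M_h)$ and rerun the duality argument of \cref{corollary:stability_with_P_tau} with $(\u-\v_{\tau h},p-q_{\tau h})$ in place of $(\u,p)$. This substitution is legitimate because the Galerkin orthogonality \cref{eq:Galerkin_orthogonality} and the linearity of the scheme allow me to replace $(\u_{\tau h},p_{\tau h})$ by $(\u_{\tau h}-\v_{\tau h},p_{\tau h}-q_{\tau h})$ throughout; the terms $J_1,J_4$ only use the $L^\infty(I;L^2(\Omega))$-norm of $\u-\v_{\tau h}$, while $J_2+J_3$ is handled exactly as in \cref{corollary:stability_with_P_tau} via $A_h\g_{\tau h}$ and the discrete maximal regularity of \cref{chap:IS:corollary:maximal_regularity_discrete}. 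The resulting bound reads
\[
\norm{\u_{\tau h}-\v_{\tau h}}_{L^{\infty}(I;L^2(\Omega))}\le C\ln\frac{\finaltime}{\tau}\Big(\norm{\u-\v_{\tau h}}_{L^{\infty}(I;L^2(\Omega))}+\norm{(\u-\v_{\tau h})-P_\tau R_h^S(\u-\v_{\tau h},p-q_{\tau h})}_{L^{\infty}(I;L^2(\Omega))}\Big).
\]

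The decisive simplification is the algebraic identity
\[
P_\tau R_h^S(\u-\v_{\tau h},p-q_{\tau h})=P_\tau R_h^S(\u,p)-\v_{\tau h},
\]
which follows from the linearity of $R_h^S$ and $P_\tau$ together with $R_h^S(\v_{\tau h},q_{\tau h})=\v_{\tau h}$ (the pair $(\v_{\tau h}(t),q_{\tau h}(t))$ is its own Stokes--Ritz projection, by the well-posedness of \cref{chap:IS:eq:stokes_projection}) and $P_\tau\v_{\tau h}=\v_{\tau h}$ (as $\v_{\tau h}\in X^w_{\tau}(\V_h)$). Substituting it collapses the second term on the right-hand side to $\norm{\u-P_\tau R_h^S(\u,p)}_{L^{\infty}(I;L^2(\Omega))}$, which is now independent of $(\v_{\tau h},q_{\tau h})$.

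Finally I would specialize to $\v_{\tau h}=P_\tau R_h^S(\u,p)$, so that the first term $\norm{\u-\v_{\tau h}}_{L^{\infty}(I;L^2(\Omega))}$ also becomes $\norm{\u-P_\tau R_h^S(\u,p)}_{L^{\infty}(I;L^2(\Omega))}$. This gives
\[
\norm{\u_{\tau h}-P_\tau R_h^S(\u,p)}_{L^{\infty}(I;L^2(\Omega))}\le C\ln\frac{\finaltime}{\tau}\,\norm{\u-P_\tau R_h^S(\u,p)}_{L^{\infty}(I;L^2(\Omega))},
\]
and the splitting $\u-\u_{\tau h}=\big(\u-P_\tau R_h^S(\u,p)\big)+\big(P_\tau R_h^S(\u,p)-\u_{\tau h}\big)$ with the triangle inequality yields the claim. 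The step I expect to be the main obstacle is precisely engineering both right-hand terms onto the single norm $\norm{\u-P_\tau R_h^S(\u,p)}$: once the identities $R_h^S(\v_{\tau h},q_{\tau h})=\v_{\tau h}$ and $P_\tau\v_{\tau h}=\v_{\tau h}$ are secured this is automatic, but it is exactly what makes the estimate sharper than \cref{chap:IS:corr_best_approx}, whose leftover term $\norm{\u-R_h^S(\u,p)}$ (without $P_\tau$) cannot be absorbed into the temporal best-approximation term.
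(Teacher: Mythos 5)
Your proposal is correct and follows essentially the same route as the paper: the paper likewise combines \cref{corollary:stability_with_P_tau} with the substitution argument of \cref{chap:IS:corr_best_approx} (where the identity $P_\tau R_h^S(\u-\v_{\tau h},p-q_{\tau h})=P_\tau R_h^S(\u,p)-\v_{\tau h}$ collapses the projection term) and then chooses $\v_{\tau h}=P_\tau R_h^S(\u,p)\in X^w_\tau(\V_h)$. Your write-up merely makes explicit the ingredients the paper leaves implicit, namely $R_h^S(\v_{\tau h},q_{\tau h})=\v_{\tau h}$, $P_\tau\v_{\tau h}=\v_{\tau h}$, and the admissibility of $P_\tau R_h^S(\u,p)$ as a comparison element.
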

\begin{proof}
Using \cref{corollary:stability_with_P_tau} and arguing as in the proof of \cref{chap:IS:corr_best_approx} we obtain
\[
\norm{\u-\u_{{\tau}h}}_{L^{\infty}(I;L^2(\Omega))} 
\leq C \ln \frac{\finaltime}{{\tau}} \Bigg(\inf_{\v_{\tau h} \in X^w_\tau(\V_h)}\norm{\u-\v_{{\tau}h}}_{L^{\infty}(I;L^2(\Omega))} +  \norm{\u - P_\tau R_h^S (\u,p)}_{L^{\infty}(I;L^2(\Omega))} \Bigg). 
\]
Choosing $\v_{{\tau}h} = P_\tau R_h^S (\u,p) \in X^w_{\tau}(\V_h)$ we obtain the result.
\end{proof}

\section{Error estimates and comparison to the literature}\label{sec:error_estimates}
In this section we apply our result from \cref{corollary:best_app_P_tau_R_h} to derive a priori error estimates. Due to the nature of the result from \cref{corollary:best_app_P_tau_R_h} we obtain error estimates which are optimal (probably up to a logarithmic term) with respect to both the orders of approximation and the assumed regularity. Moreover, we compare these estimates with the results from the literature. For this section we assume the domain $\Omega$ to be polygonal/polyhedral and convex. 
\begin{remark}\label{remark:non_convex}
Note that the results from \cref{chap:IS:corr_best_approx} or \cref{corollary:best_app_P_tau_R_h} can be applied also to non-convex domains and to meshes, which are not necessarily quasi-uniform, including graded or even anisotropic refinement towards reentrant corners or edges. In this case one can use error estimates for the solution of the stationary Stokes equations for such cases, see, \cite{Apel:2021} and the references therein, in order to estimate $\u - R_h^S(\u,p)$ in  \cref{chap:IS:eq:convergence}.
\end{remark}

For this section we assume the following standard approximation properties for the spaces $\X_h$ and $M_h$.
\begin{assumption}\label{assumption_interpolation}
There exists an interpolation operator $i_h \colon H^2(\Omega)^d \cap H^1_0(\Omega)^d \to \X_h$ and $r_h \colon L^2(\Omega) \to M_h$ such that
\[
\norm{\nabla(\v-i_h v)}_{L^2(\Omega)} \le c h \norm{\nabla^2 v} _{L^2(\Omega)} \quad \forall \v \in H^2(\Omega)^d \cap H^1_0(\Omega)
\]
and
\[
\norm{q-r_h q}_{L^2(\Omega)} \le c h \norm{\nabla q} _{L^2(\Omega)} \quad \forall q \in H^1(\Omega).
\]	
\end{assumption}
This assumption is fulfilled for a variety of finite element pairs including, e.g., Taylor-Hood as well as mini element on a family of shape regular meshes.
 Under this assumption the following standard estimate holds for the Ritz projection for the Stokes problem \cref{chap:IS:eq:stokes_projection}.

\begin{proposition}\label{prop:estimate_Ritz_Stokes}
Let $\Omega$ be convex and \cref{assumption_interpolation} be fulfilled. There is a constant $C>0$ such that for all $(\u,p)$ with $\u \in H^2(\Omega)^d\cap \V^1$ and $p \in H^1(\Omega) \cap L^2_0(\Omega)$ the following estimate holds
\[
\norm{\u-R_h^S(\u,p)}_{L^2(\Omega)} \le C h^2 \left(\norm{\nabla^2 \u}_{L^2(\Omega)} + \norm{\nabla p}_{L^2(\Omega)}\right).
\]
\end{proposition}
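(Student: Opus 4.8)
The plan is to combine an energy ($H^1$) error estimate with an Aubin--Nitsche duality argument. Write $\u_h = R_h^S(\u,p)$, $p_h = R_h^{S,p}(\u,p)$ and $\vec{e} = \u - \u_h$. First I would establish the energy estimate
\[
\norm{\nabla \vec{e}}_{L^2(\Omega)} + \norm{p - p_h}_{L^2(\Omega)} \le C h \big( \norm{\nabla^2 \u}_{L^2(\Omega)} + \norm{\nabla p}_{L^2(\Omega)} \big),
\]
which follows in the standard way from the defining relations \cref{chap:IS:eq:stokes_projection}, the discrete inf-sup condition \cref{chap02:eq:discrete_infsup}, and the interpolation bounds of \cref{assumption_interpolation} applied with the choices $i_h \u$ and $r_h p$ in the quasi-optimality estimate for the Stokes projection.

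For the $L^2$ bound I would introduce the dual (adjoint) Stokes problem: find $(\vphi,\xi)$ with $-\Delta \vphi + \nabla \xi = \vec{e}$ and $\nabla \cdot \vphi = 0$ in $\Omega$, together with $\vphi = \vec{0}$ on $\partial\Omega$. Since $\Omega$ is convex, the $H^2$ regularity afforded by \cref{eq: H2 regularity A} together with the stationary pressure regularity (the stationary analogue of \cref{cor:Omega_convex_2}) yields $\norm{\nabla^2 \vphi}_{L^2(\Omega)} + \norm{\nabla \xi}_{L^2(\Omega)} \le C \norm{\vec{e}}_{L^2(\Omega)}$. Testing this problem against $\vec{e}$ and integrating by parts, with all boundary terms vanishing because $\vec{e} \in H^1_0(\Omega)^d$, gives
\[
\norm{\vec{e}}^2_{L^2(\Omega)} = (\nabla \vec{e}, \nabla \vphi) - (\nabla \cdot \vec{e}, \xi).
\]

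The heart of the argument is then to insert the interpolants $i_h \vphi \in \X_h$ and $r_h \xi \in M_h$ and exploit the orthogonalities in \cref{chap:IS:eq:stokes_projection}. By \cref{chap:IS:eq:stokes_projection_1} one has $(\nabla \vec{e}, \nabla i_h \vphi) = (p - p_h, \nabla \cdot i_h \vphi)$, while $(\nabla \cdot \vec{e}, r_h \xi) = 0$ by \cref{chap:IS:eq:stokes_projection_2}; moreover, since $\nabla \cdot \vphi = 0$ one may rewrite $\nabla \cdot i_h \vphi = \nabla \cdot (i_h \vphi - \vphi)$. This reduces the identity to
\[
\norm{\vec{e}}^2_{L^2(\Omega)} = (\nabla \vec{e}, \nabla (\vphi - i_h \vphi)) + (p - p_h, \nabla \cdot (i_h \vphi - \vphi)) - (\nabla \cdot \vec{e}, \xi - r_h \xi),
\]
and a Cauchy--Schwarz estimate of each term, using \cref{assumption_interpolation} and the dual regularity bound, produces
\[
\norm{\vec{e}}^2_{L^2(\Omega)} \le C h \big( \norm{\nabla \vec{e}}_{L^2(\Omega)} + \norm{p - p_h}_{L^2(\Omega)} \big) \norm{\vec{e}}_{L^2(\Omega)}.
\]
Cancelling one factor of $\norm{\vec{e}}_{L^2(\Omega)}$ and substituting the energy estimate from the first step yields the claimed $O(h^2)$ bound.

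The hard part will be the bookkeeping around the pressure: the interpolant $i_h \vphi$ is not discretely divergence free, so the term $(p - p_h, \nabla \cdot i_h \vphi)$ does not vanish and must be rewritten via $\nabla \cdot \vphi = 0$ before the interpolation bound on $\norm{\nabla(\vphi - i_h \vphi)}_{L^2(\Omega)}$ can be brought to bear. One must also verify that the required dual pressure regularity $\norm{\nabla \xi}_{L^2(\Omega)} \le C \norm{\vec{e}}_{L^2(\Omega)}$ is available on convex polyhedral domains, which is precisely the stationary counterpart of the estimates recorded in \cref{eq: H2 regularity A} and \cref{cor:Omega_convex_2}.
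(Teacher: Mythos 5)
Your proposal is correct: the $O(h)$ energy estimate for the Stokes projection combined with an Aubin--Nitsche duality argument, using the $H^2\times H^1$ regularity of the dual Stokes problem on a convex polyhedron and the orthogonality relations \cref{chap:IS:eq:stokes_projection} to insert $i_h\vphi$ and $r_h\xi$, is exactly the standard route to this bound. The paper does not write this argument out --- its proof is a one-line citation to \cite[Theorem 1.9]{1986Girault} --- and that cited theorem is itself proved by precisely the duality argument you give, so your approach coincides with the one the paper invokes.
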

\begin{proof}
We refer, e.g., to \cite[Theorem 1.9]{1986Girault}.
\end{proof}

In the following theorem we provide an error estimate of order ${\mathcal O}(\tau +  h^2)$ up to a logarithmic term under minimal assumption on the data. The estimate holds for every choice of degree $w$ in the temporal discretization but especially for $w=0$, i.e., for the dG($0$) discretization, which is known to be a variant of the implicit Euler scheme.

\begin{theorem}\label{theorem:error_estimate}
Let $\Omega$ be convex, $\f \in L^\infty(I,L^2(\Omega)^d)$ and $\u_0 \in \V^2$ and let \cref{assumption_interpolation} be fulfilled. Let $(\u,p)$ be the solution of \cref{eq: weak stokes with pressure} and $(\u_{{\tau}h},p_{\tau h})$ solve the respective finite element problem \cref{eq:spacetime_discretization}. Then, there holds
    \begin{equation}
\norm{\u-\u_{{\tau}h}}_{L^{\infty}(I;L^2(\Omega))}\nonumber
\leq C \left(\ln \frac{\finaltime}{{\tau}}\right)^2\left({\tau} + h^2\right) \left( \norm{\f}_{L^{\infty}(I;L^2(\Omega))} + \norm{\u_0}_{\V^2}\right).
\end{equation}
\end{theorem}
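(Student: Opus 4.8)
The plan is to start from \cref{corollary:best_app_P_tau_R_h}, which already supplies one of the two logarithmic factors and reduces the theorem to estimating the best-approximation quantity $\norm{\u - P_\tau R_h^S(\u,p)}_{L^\infty(I;L^2(\Omega))}$ with the optimal order $\tau + h^2$. I would split this error into a purely temporal and a purely spatial contribution,
\[
\u - P_\tau R_h^S(\u,p) = (\u - P_\tau \u) + P_\tau\bigl(\u - R_h^S(\u,p)\bigr),
\]
and bound each piece separately, each time at the price of a single further logarithmic factor produced by an optimization in the integrability exponent $s$.

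For the temporal term I would invoke the projection estimate \cref{eq:P_tau_estimate} to get $\norm{\u - P_\tau\u}_{L^\infty(I;L^2)} \le C\tau^{1-1/s}\norm{\partial_t\u}_{L^s(I;L^2)}$, and then control $\norm{\partial_t\u}_{L^s(I;L^2)}$ by the continuous maximal parabolic regularity of \cref{proposition:max_reg} together with \cref{remark:s_to_infty}. Since $\f$ only lies in $L^\infty(I;L^2)$, I would pass from $L^s$ to $L^\infty$ via $\norm{\f}_{L^s(I;L^2)} \le \finaltime^{1/s}\norm{\f}_{L^\infty(I;L^2)}$, so the prefactor becomes $\tau^{1-1/s}\finaltime^{1/s}\tfrac{s^2}{s-1} = \tau\,(\finaltime/\tau)^{1/s}\tfrac{s^2}{s-1}$. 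Choosing $s \sim \ln(\finaltime/\tau)$ — admissible since $\tau \le \finaltime/4$ forces $\ln(\finaltime/\tau) > 1$ — makes $(\finaltime/\tau)^{1/s}$ bounded while $\tfrac{s^2}{s-1}\sim \ln(\finaltime/\tau)$, giving the temporal bound $C\tau\ln(\finaltime/\tau)\bigl(\norm{\f}_{L^\infty(I;L^2)} + \norm{\u_0}_{\V^2}\bigr)$.

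The spatial term is the main obstacle. The naive route $\norm{P_\tau(\u - R_h^S(\u,p))}_{L^\infty(I;L^2)} \le C\norm{\u - R_h^S(\u,p)}_{L^\infty(I;L^2)} \le Ch^2\norm{\u}_{L^\infty(I;H^2)}$ fails, because continuous maximal regularity degenerates at the endpoint $s=\infty$ and $\norm{\u}_{L^\infty(I;H^2)}$ is genuinely \emph{not} controlled by $\f \in L^\infty(I;L^2)$ (the Duhamel integral for $A\u$ diverges logarithmically). To circumvent this I would use that $P_\tau R_h^S(\u,p)$ is piecewise polynomial in time and that $P_\tau$ acts locally on each $I_m$: a standard inverse estimate in time on $I_m$, followed by the localized form of the $L^s$-stability \cref{eq:P_tau_stability}, yields for each $m$
\[
\norm{P_\tau(\u - R_h^S(\u,p))}_{L^\infty(I_m;L^2)} \le C\tau_m^{-1/s}\norm{\u - R_h^S(\u,p)}_{L^s(I_m;L^2)}.
\]
The pointwise-in-time Ritz estimate \cref{prop:estimate_Ritz_Stokes} bounds the right-hand side by $Ch^2\tau_m^{-1/s}\bigl(\norm{\u}_{L^s(I_m;H^2)} + \norm{\nabla p}_{L^s(I_m;L^2)}\bigr)$. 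Taking the maximum over $m$, using the mesh assumption $\tau_m \ge \tau_{\min} \ge C\tau^{\beta}$, and inserting the $L^s$ regularity of $\u$ and $p$ from \cref{remark:Omega_convex_1} and \cref{cor:Omega_convex_2} (with $\u_0$ again handled via \cref{remark:s_to_infty}) reduces matters to the factor $h^2\,\tau^{-\beta/s}\finaltime^{1/s}\tfrac{s^2}{s-1}$. The same choice $s \sim \ln(\finaltime/\tau)$ renders $\tau^{-\beta/s}$ and $\finaltime^{1/s}$ bounded and gives the spatial bound $Ch^2\ln(\finaltime/\tau)\bigl(\norm{\f}_{L^\infty(I;L^2)} + \norm{\u_0}_{\V^2}\bigr)$.

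Adding the two estimates yields $\norm{\u - P_\tau R_h^S(\u,p)}_{L^\infty(I;L^2)} \le C\ln(\finaltime/\tau)(\tau + h^2)\bigl(\norm{\f}_{L^\infty(I;L^2)} + \norm{\u_0}_{\V^2}\bigr)$, and substituting this into \cref{corollary:best_app_P_tau_R_h}, whose own logarithmic prefactor upgrades the single logarithm to $(\ln(\finaltime/\tau))^2$, gives precisely the claimed estimate. The delicate point throughout is the failure of maximal regularity at $s=\infty$; the whole strategy is to trade this failure for logarithmic factors by running every estimate at a finite but large $s \sim \ln(\finaltime/\tau)$, which is exactly what forces both the inverse-estimate argument for the spatial part and the use of the mesh assumption $\tau_{\min}\ge C\tau^{\beta}$.
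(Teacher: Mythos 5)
Your proposal is correct and takes essentially the same route as the paper's proof: start from \cref{corollary:best_app_P_tau_R_h}, split $\u - P_\tau R_h^S(\u,p) = (\u - P_\tau \u) + P_\tau\bigl(\u - R_h^S(\u,p)\bigr)$, treat the temporal part with \cref{eq:P_tau_estimate}, the spatial part with an inverse inequality in time, the $L^s$-stability of $P_\tau$ and \cref{prop:estimate_Ritz_Stokes}, and close both via continuous maximal parabolic regularity with the choice $s \sim \ln(\finaltime/\tau)$, yielding the second logarithm. If anything you are slightly more careful than the paper, which writes the inverse-estimate factor globally as $\tau^{-1/s}$, whereas your localized version $\tau_m^{-1/s} \le \tau_{\min}^{-1/s} \le C\tau^{-\beta/s}$ correctly invokes the mesh assumption; both factors are bounded after the choice of $s$, so the difference is immaterial.
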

\begin{proof}
	We start with the result from \cref{corollary:best_app_P_tau_R_h} and obtain
	\[
	\begin{aligned}
	    \norm{\u-\u_{{\tau}h}}_{L^{\infty}(I;L^2(\Omega))} 
	&\le C \ln \frac{\finaltime}{{\tau}} \norm{\u - P_\tau R_h^S (\u,p)}_{L^{\infty}(I;L^2(\Omega))}\\
	&\le C\ln \frac{\finaltime}{{\tau}} \left(\norm{\u - P_\tau \u}_{L^{\infty}(I;L^2(\Omega))}+\norm{P_\tau (\u - R_h^S (\u,p))}_{L^{\infty}(I;L^2(\Omega))}\right)\\
	&\le C\ln \frac{\finaltime}{{\tau}} \left(\norm{\u - P_\tau \u}_{L^{\infty}(I;L^2(\Omega))}+\tau^{-\frac{1}{s}}\norm{P_\tau (\u - R_h^S (\u,p))}_{L^s(I;L^2(\Omega))}\right)\\
	&\le C\ln \frac{\finaltime}{{\tau}} \left(\norm{\u - P_\tau \u}_{L^{\infty}(I;L^2(\Omega))}+\tau^{-\frac{1}{s}}\norm{\u - R_h^S (\u,p)}_{L^s(I;L^2(\Omega))}\right),
	\end{aligned}
	\]
	where we have used an inverse inequality for some $1<s<\infty$ and the stability of $P_\tau$ in $L^s$ from \cref{eq:P_tau_stability}. The temporal projection error is estimated by \cref{eq:P_tau_estimate} resulting in
	\[
	\norm{\u - P_\tau \u}_{L^{\infty}(I;L^2(\Omega))} \le C \tau^{1-\frac{1}{s}} \norm{\partial_t \u}_{L^s(I;L^2(\Omega))}.
	\]
	The spatial error is estimated by \cref{prop:estimate_Ritz_Stokes} resulting in
	\[
	\norm{\u - R_h^S (\u,p)}_{L^s(I;L^2(\Omega))} \le C h^2 \left(\norm{\nabla^2 \u}_{L^s(I;L^2(\Omega))} + \norm{\nabla p}_{L^s(I;L^2(\Omega))} \right).
	\]
Using maximal parabolic regularity and the convexity of $\Omega$, see \cref{remark:Omega_convex_1}, \cref{remark:s_to_infty} and \cref{cor:Omega_convex_2}, we obtain
\begin{equation}
\norm{\partial_t \u}_{L^s(I;L^2(\Omega))} +\norm{\nabla^2 \u}_{L^s(I;L^2(\Omega))} + \norm{\nabla p}_{L^s(I;L^2(\Omega))}
\le \frac{Cs^2}{s-1} \left(\norm{\f}_{L^s(I; L^2(\Omega))} + \norm{\u_0}_{\V^2}\right).
\end{equation}
For $s \ge 2$ we have $\frac{s^2}{s-1} \le 2s$. We choose $s = 2 \ln \frac{\finaltime}{{\tau}} \ge 2$ and get
\[
    \tau^{-\frac{1}{s}} =  \finaltime^{-\frac{1}{s}} \left(\frac{\finaltime}{\tau}\right)^{\frac{1}{s}} \le C(\finaltime) e^{\frac{1}{2}}. 
\]
Combining these terms we obtain the desired estimate.
\end{proof}
\begin{remark}
Under the additional assumption $\u_t$, $\Delta \u$, $\nabla p \in L^\infty(I,L^2(\Omega)^d)$, it is possible to remove one of the logarithmic terms in the result of \cref{theorem:error_estimate}.
\end{remark}

To compare our error estimate from \cref{theorem:error_estimate} with the results from the literature we first remark that our result especially holds for $w=0$, i.e., the dG($0$) discretization in time, which is known to be a variant of the implicit Euler scheme. In \cite{1986Heywood} the authors discuss the discretization of the transient Navier-Stokes equation by the implicit Euler scheme in time and finite elements in space. They prove an estimate of order ${\mathcal O}(\tau+h^2)$ (which corresponds to \cref{theorem:error_estimate} up to a logarithmic term) for the velocity error in the $L^\infty(I,L^2(\Omega)^d)$ norm, see  \cite[p. 765]{1986Heywood}. However, they require stronger regularity assumptions, in particular $\partial_t \f \in L^\infty(I;L^2(\Omega)^d)$. Note, that our setting and the setting from \cite{1986Heywood} are not fully comparable.

The authors of \cite{2010Chrysafinos} operate in a similar setting as here, discussing the discontinuous Galerkin method for the temporal discretization of the Stokes problem.
For the full discretization they derive the following estimate, see \cite[Theorem 4.9]{2010Chrysafinos} 
\begin{multline} \label{chap:IS:error_estimate_3}
    \norm{\u-\u_{{\tau}h}}_{L^{\infty}(I;L^2(\Omega))}
    \leq C \Big(  h\Big( \norm{\u}_{L^{2}(I;H^2(\Omega))} + h\norm{\u}_{L^{\infty}(I;H^2(\Omega))}\Big)
	+ {\tau} \Big(\norm{\u}_{H^{1}(I;H^1(\Omega))} + \norm{\u}_{W^{1,\infty}(I;L^2(\Omega))}\Big) \\
    \qquad +h\norm{\u_0}_{H^1(\Omega)} + \norm{\u}_{C(I;H^2(\Omega))}\min\Big(h^{3/2}/{\tau},\sqrt{h/{\tau}}\Big)h^{3/2} + h \norm{p}_{L^{2}(I;H^1(\Omega))}
\Big)
\end{multline}
with corresponding regularity assumptions on the solution $(\u,p)$. Comparing our result in \cref{theorem:error_estimate} with \cref{chap:IS:error_estimate_3}, we want to emphasize that we require much less regularity, provide a better convergence order with respect to $h$ and do not have any ``mixed terms" containing both $h$ and $\tau$ (apart from the logarithmic term). 

\section{Discrete regularity estimate for the pressure}\label{sec:pressure}
The  above results were so far solely focused on the velocity estimates. To provide estimates in tune with \cref{cor:Omega_convex_2} also in the discrete setting, we extend the results from \cref{sec:maximal_regularity} to the gradient of the pressure for certain finite element discretizations. 
These pressure estimates do not immediately lead  to best approximation results as in the velocity case above, but to our knowledge discrete regularity estimates for the pressure have not yet been reported.
In this section we assume the domain $\Omega$ to be convex and will use the estimate discussed in \cref{remark:delta_est}, i.e.,
\begin{equation}\label{eq:dmpr_infty_with_Laplace_h}
\max_{1\leq m \leq M} \norm{\partial_t \u_{{\tau}h}}_{L^\infty(I_m; L^2(\Omega))} + \norm{\Delta_h \u_{{\tau}h}}_{L^\infty(I;L^2(\Omega))}
+ \max_{1\leq m \leq M} \norm{{\tau}_m^{-1} [\u_{{\tau}h}]_{m-1}}_{L^2(\Omega)}
\leq C \ln \frac{\finaltime}{{\tau}} \norm{\mathbb{P}_h\f}_{L^\infty(I; L^2(\Omega))},
\end{equation}
in the setting of \cref{chap:IS:corollary:maximal_regularity_discrete}.

In the sequel we require the following proposition, which is given in \cite[Theorems 3.6, 4.1]{2013Guzman}.
\begin{proposition}\label{chap:IS:pressure_est}
    Let $\X_h\times M_h$ fulfill the inf-sup condition in \cref{chap02:eq:discrete_infsup} and the discrete pressure space fulfill the assumption $M_h\subset L^2_0(\Omega)\cap H^1(\Omega)$.
    Then, there holds
    \begin{equation}
	\sup_{\v_h \in \X_h, \v_h \neq \vec 0} \frac{(\nabla l_h,\v_h)}{\twonorm{\v_h}} \geq C \twonorm{\nabla l_h} \qquad \forall l_h \in M_h.
    \end{equation}
\end{proposition}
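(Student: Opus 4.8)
The plan is to reduce the claimed gradient inf-sup to a statement about how much of the $L^2$ energy of $\nabla l_h$ the velocity space $\X_h$ can see, and then to produce this lower bound by an explicit, element-local test function. First I would record two reformulations. Since $l_h \in M_h \subset H^1(\Om)$ while every $\v_h \in \X_h \subset H^1_0(\Om)^d$ has vanishing trace, integration by parts gives $(\nabla l_h,\v_h) = -(l_h,\nabla\cdot\v_h)$, so the gradient form and the divergence form of the pairing are interchangeable. More usefully, for fixed $l_h$ the supremum equals the $L^2$ norm of the $L^2$-orthogonal projection $\Pi_h(\nabla l_h)$ of $\nabla l_h$ onto $\X_h$: indeed $(\nabla l_h,\v_h) = (\Pi_h \nabla l_h,\v_h) \le \twonorm{\Pi_h\nabla l_h}\twonorm{\v_h}$, with equality for $\v_h = \Pi_h \nabla l_h$. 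Thus the proposition is equivalent to the bound $\twonorm{\nabla l_h} \le C\twonorm{\Pi_h \nabla l_h}$, i.e.\ to the claim that the angle between $\nabla M_h$ and $\X_h$ in $L^2(\Om)^d$ stays bounded away from a right angle uniformly in $h$.

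To exhibit the lower bound I would avoid $\Pi_h$ and instead construct a competitor directly. The field $\nabla l_h$ is element-wise polynomial, so on each simplex $K \in \Th$ I set $b_K$ to be the interior bubble vanishing on $\partial K$ and consider $\v_h := \sum_{K\in\Th} b_K\,\nabla l_h$. This function is continuous, vanishes on $\partial\Om$, and lies in $\X_h$ whenever the velocity space is bubble-enriched to the appropriate degree; for the MINI element, where $\nabla l_h$ is piecewise constant, $b_K\nabla l_h$ is exactly a velocity bubble, so $\v_h \in \X_h$. The interior bubbles have disjoint supports, so the standard bubble norm equivalences on the reference simplex, transported by shape regularity, yield $(\nabla l_h,\v_h) = \sum_K \int_K b_K\,\abs{\nabla l_h}^2 \ge c\sum_K \norm{\nabla l_h}^2_{L^2(K)} = c\,\stwonorm{\nabla l_h}$ and simultaneously $\stwonorm{\v_h} = \sum_K \norm{b_K\nabla l_h}^2_{L^2(K)} \le C\stwonorm{\nabla l_h}$. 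Dividing these two estimates produces $(\nabla l_h,\v_h)/\twonorm{\v_h} \ge c\,\twonorm{\nabla l_h}$, which is the asserted inequality with a constant depending only on the shape regularity and the polynomial degree.

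The main obstacle is the compatibility between $\nabla M_h$ and $\X_h$ for pairs that are not bubble-enriched in this direct fashion---most notably Taylor--Hood, where $\X_h$ consists of continuous quadratics while $b_K\nabla l_h$ is cubic, so the $\v_h$ above is not admissible. For such pairs one cannot simply project, since $\Pi_h$ is forced to vanish on $\partial\Om$ and may therefore discard an $h$-independent fraction of the energy of $\nabla l_h$ in the boundary layer, precisely where $l_h$ need not have small gradient. The remedy is a Fortin-type operator delivering a competitor in $\X_h$ that reproduces enough of the pairing with $\nabla l_h$ while staying $L^2$-bounded by $\twonorm{\nabla l_h}$, built element by element and matched across interelement faces, with the elements abutting $\partial\Om$ requiring the most care. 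This element-specific construction is exactly the content of \cite[Theorems 3.6, 4.1]{2013Guzman}, which I would invoke; for a self-contained treatment under \cref{assumption_interpolation} I would instead carry out the local bubble/Fortin argument for the concrete pair in use. Finally I would stress that the cheap route---applying the standard inf-sup \cref{chap02:eq:discrete_infsup} and then an inverse inequality---does \emph{not} work: that chain controls only $\twonorm{l_h}$ and forfeits a factor $h^{-1}$, so the gradient estimate genuinely relies on the local structure captured above.
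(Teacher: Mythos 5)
Your proposal is correct, but it is worth stating plainly what the paper does at this point: it gives no proof at all and imports the statement wholesale from \cite[Theorems 3.6, 4.1]{2013Guzman}. For the pairs that make the proposition genuinely hard (Taylor--Hood), you end up invoking exactly that same citation, so in the decisive step your route coincides with the paper's. What you add on top of the bare citation is genuine and correct: (i) the reformulation that the claimed inequality is equivalent to $\twonorm{\nabla l_h} \le C \twonorm{\Pi_h \nabla l_h}$ for the $L^2$ projection $\Pi_h$ onto $\X_h$, i.e.\ a uniform angle condition between $\nabla M_h$ and $\X_h$; (ii) a complete elementary proof for bubble-enriched pairs such as the MINI element via $\v_h = \sum_K b_K \nabla l_h$, where the finite-dimensional norm equivalence $\int_K b_K \abs{\nabla l_h}^2 \ge c\, \norm{\nabla l_h}^2_{L^2(K)}$ together with $0 \le b_K \le 1$ yields the bound with a constant depending only on the polynomial degree (and your observation that the summands vanish on the skeleton, so $\v_h$ is conforming, is the right justification); and (iii) the correct diagnosis that the cheap chain --- standard inf-sup \cref{chap02:eq:discrete_infsup} plus an inverse estimate --- only controls $\twonorm{l_h}$ and therefore degrades by a factor $h$, which is precisely why a local construction or a Fortin-type operator is unavoidable. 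One caveat applies equally to you and to the paper: the proposition is phrased for an \emph{arbitrary} inf-sup stable pair with $M_h \subset L^2_0(\Omega) \cap H^1(\Omega)$, yet neither your bubble argument nor the cited theorems cover that generality; the citation handles the concrete families (Taylor--Hood, MINI), which is also what the paper's remark immediately after the proposition tacitly concedes. So your honest restriction to specific element pairs is not a gap relative to the paper --- it mirrors the actual state of the proof.
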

Note, that $\nabla l_h$ is well-defined for these finite element spaces. Finite element spaces that fulfill these assumptions are among others the space of Taylor-Hood finite elements or the space of the mini element.
The next theorem provides a discrete regularity estimate for the pressure.
\begin{theorem}\label{DMPR:Pressure}
   Let $1 \leq s \le  \infty$, $\Omega$ be convex, $\f \in L^s(I,L^2(\Omega)^d)$ and $\u_0 =0$. Let moreover the assumptions of \cref{chap:IS:pressure_est,remark:delta_est} be fulfilled. Let $(\u_{\tau h},p_{\tau h})\in X^w_{\tau}(\X_h\times M_h)$ be the solution to \cref{eq:spacetime_discretization}. Then there holds
    \begin{equation}
	\norm{\nabla p_{{\tau}h}}_{L^{s}(I;L^2(\Omega))}
	\leq C \ln \frac{\finaltime}{{\tau}} \norm{
	\f}_{L^{s}(I; L^2(\Omega))}.
    \end{equation}
\end{theorem}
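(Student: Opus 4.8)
The plan is to extract a pointwise-in-time identity for $\nabla p_{{\tau}h}$ from the fully discrete system \cref{eq:spacetime_discretization}, and then convert the velocity bounds already available from \cref{chap:IS:corollary:maximal_regularity_discrete,remark:delta_est} into the desired pressure estimate by means of the discrete inf-sup condition of \cref{chap:IS:pressure_est}. Since $M_h \subset H^1(\Omega)$, the gradient $\nabla p_{{\tau}h}(t)$ is a well-defined element of $L^2(\Omega)^d$ for each $t$, and integration by parts gives $-(p_{{\tau}h},\nabla\cdot\v_h) = (\nabla p_{{\tau}h},\v_h)$ for $\v_h \in \X_h \subset H^1_0(\Omega)^d$.

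First I would test \cref{eq:spacetime_discretization} with functions of the separated form $\v_{{\tau}h}(t) = \phi(t)\v_h$, with $\phi \in \mathcal{P}_w(I_m)$ extended by zero, $\v_h \in \X_h$, and $q_{{\tau}h}=0$. Using $(\nabla\u_{{\tau}h},\nabla\v_h) = (-\Delta_h \u_{{\tau}h},\v_h)$ and the integration by parts above, the restriction of the equation to $I_m$ reads
\[
\int_{I_m}\phi\,(\partial_t\u_{{\tau}h} - \Delta_h\u_{{\tau}h} + \nabla p_{{\tau}h} - \f,\v_h)\,dt + \phi(t_{m-1}^+)\,([\u_{{\tau}h}]_{m-1},\v_h) = 0,
\]
with the convention $[\u_{{\tau}h}]_0 = \u^+_{{\tau}h,0}$ arising from $\u_0=0$. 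Because $\phi$ is a polynomial, $\f$ may be replaced by its temporal $L^2$-projection $P_\tau\f$ without altering the integral, so the first integrand becomes a polynomial of degree $\le w$ in $t$. Introducing the Riesz representative $\delta^{(m)} \in \mathcal{P}_w(I_m)$ of the endpoint functional, characterised by $\int_{I_m}\phi\,\delta^{(m)}\,dt = \phi(t_{m-1}^+)$ for all $\phi \in \mathcal{P}_w(I_m)$, the displayed identity states that a fixed element of $\mathcal{P}_w(I_m)$ is $L^2(I_m)$-orthogonal to all of $\mathcal{P}_w(I_m)$, hence vanishes. This yields the pointwise identity, valid for every $t \in I_m$ and every $\v_h \in \X_h$,
\[
(\nabla p_{{\tau}h}(t),\v_h) = (-\partial_t\u_{{\tau}h}(t) + \Delta_h\u_{{\tau}h}(t) + P_\tau\f(t),\v_h) - \delta^{(m)}(t)\,([\u_{{\tau}h}]_{m-1},\v_h).
\]

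Next I would apply the inf-sup inequality of \cref{chap:IS:pressure_est} at each fixed $t$ with $l_h = p_{{\tau}h}(t)$, bounding the left-hand side below by $C\twonorm{\nabla p_{{\tau}h}(t)}$, while the right-hand side is estimated, after division by $\twonorm{\v_h}$ and Cauchy--Schwarz, by $\twonorm{\partial_t\u_{{\tau}h}(t)} + \twonorm{\Delta_h\u_{{\tau}h}(t)} + \twonorm{P_\tau\f(t)} + \abs{\delta^{(m)}(t)}\,\twonorm{[\u_{{\tau}h}]_{m-1}}$. Taking $L^s(I_m)$ norms in time and using the scaling $\norm{\delta^{(m)}}_{L^s(I_m)} \le C\tau_m^{\frac{1}{s} - 1}$ (which follows by transformation to the reference interval, where $\int_{I_m}\delta^{(m)}\,dt = 1$), the jump contribution becomes $C\,\tau_m^{1/s}\,\twonorm{\tau_m^{-1}[\u_{{\tau}h}]_{m-1}} = C(\tau_m \twonorm{\tau_m^{-1}[\u_{{\tau}h}]_{m-1}}^s)^{1/s}$, which is precisely the weighted jump term in \cref{eq:dmpr}. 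Raising to the power $s$, summing over $m$, and combining the stability \cref{eq:P_tau_stability} of $P_\tau$ with the discrete maximal parabolic regularity estimate \cref{chap:IS:corollary:maximal_regularity_discrete} in the form involving $\Delta_h$ (see \cref{remark:delta_est} and \cref{eq:dmpr_infty_with_Laplace_h}), all remaining terms are bounded by $C\ln\frac{\finaltime}{\tau}\norm{\f}_{L^s(I;L^2(\Omega))}$, giving the claim for $1\le s<\infty$; the case $s=\infty$ is identical with sums and $L^s(I_m)$ norms replaced by maxima and $L^\infty(I_m)$ norms, using $\norm{\delta^{(m)}}_{L^\infty(I_m)}\le C\tau_m^{-1}$.

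The main obstacle is the correct treatment of the dG jump terms: a naive attempt to read off a pointwise pressure equation fails because $\f$ is not piecewise polynomial and because the interelement jumps are concentrated at the time nodes. Both difficulties are resolved simultaneously by projecting $\f$ in time and by encoding the endpoint evaluation through the representative $\delta^{(m)}$, whose $L^s(I_m)$-scaling reproduces exactly the weighted jump seminorm controlled by the discrete maximal regularity estimate. Once this identity is in place, the remainder is a direct application of the inf-sup condition together with the already established velocity bounds.
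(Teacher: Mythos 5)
Your proof is correct, and it reaches the result by a genuinely different route than the paper. Both arguments share the same backbone: the pointwise-in-time use of the inf-sup condition of \cref{chap:IS:pressure_est}, integration by parts exploiting $M_h \subset H^1(\Omega)$, and the reduction to the velocity bounds of \cref{chap:IS:corollary:maximal_regularity_discrete} combined with \cref{remark:delta_est}. The execution differs in two ways. First, the paper never forms a pointwise equation: it stays in integrated form, converting the point value $\nabla p_{\tau h}(\tilde t)$ into a space-time integral by means of a regularized Dirac function $\theta$ (cf. \cref{theta_def}), replacing $\theta$ by $P_\tau(\theta)$ so that $P_\tau(\theta)\v_h$ is an admissible test function in \cref{eq:spacetime_discretization}. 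You instead extract an exact strong form of the dG scheme on each $I_m$: absorbing the jump into the polynomial Riesz representative $\delta^{(m)}$ of endpoint evaluation turns the localized weak equation into an $L^2(I_m)$-orthogonality of a degree-$w$ polynomial against all of $\mathcal{P}_w(I_m)$, hence an identity valid at every $t$; this step is sound, since after replacing $\f$ by $P_\tau \f$ every term in the integrand is indeed a polynomial of degree at most $w$. Second, the paper proves the cases $s=\infty$ and $s=1$ separately (the latter via the auxiliary construction $\tilde \v_{\tau h}=\sum_m P_\tau(\theta^m)\tau_m\v_h^m/\twonorm{\v_h^m}$) and obtains $1<s<\infty$ by interpolation, whereas your scaling $\norm{\delta^{(m)}}_{L^s(I_m)}\le C\tau_m^{1/s-1}$ reproduces exactly the weighted jump term of \cref{eq:dmpr} and yields all exponents $1\le s\le\infty$ in one pass. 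What your approach buys: no interpolation step, no regularized delta functions, a single uniform argument (note that when you raise to the power $s$ and sum, the constant from splitting the four terms is $4^{(s-1)/s}\le 4$, so uniformity in $s$ is preserved; applying Minkowski's inequality globally on $I$ avoids even this), plus a pointwise identity for $(\u_{\tau h},p_{\tau h})$ of independent interest. What the paper's approach buys: it reuses machinery ($\theta$ and $P_\tau$) already introduced for \cref{chap:IS:theorem:L2Linfty_stability}, keeping the toolset of the paper consistent, and it avoids the reference-interval scaling argument for the discrete delta.
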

\begin{proof}
    We first consider the case $s=\infty$.

    Let $\tilde t \in I_{\tilde m}$ for $1\leq \tilde m \leq M$
    and let $\theta(t)$ from the proof of \cref{chap:IS:theorem:L2Linfty_stability} be the regularized Dirac function supported in the interior of the time interval $I_{\tilde m}$ (cf. \cref{theta_def}) such that for $\tilde t \in I_{\tilde m}$ it holds by \cref{chap:IS:pressure_est} and integration by parts,
    \begin{align}	%
	\norm{\nabla p_{{\tau}h}(\tilde t)}_{L^2(\Omega)}
	\leq C \sup_{\v_h \in \X_h, \v_h \neq \vec 0} \frac{ (\nabla p_{{\tau}h}(\tilde t), \v_h)_{\Omega}}{\twonorm{\v_h}}
	&= C \sup_{\v_h \in \X_h, \v_h \neq \vec 0}  \frac{(\nabla p_{{\tau}h}, \theta\v_h)_{I\times\Omega}}{\twonorm{\v_h}}\\
	&= C \sup_{\v_h \in \X_h, \v_h \neq \vec 0}  \frac{(-p_{{\tau}h}, \theta \nabla \cdot \v_h)_{I\times\Omega}}{\twonorm{\v_h}}.\label{eq:p_max_reg_1}
    \end{align}
    Since $p_{{\tau}h}(\x)$ is in $X_{\tau}^w(L^2(\Omega))$, using the orthogonal projection $P_{\tau}$ from \eqref{eq:P_tau} %
    we have 
    \begin{equation}
	(p_{{\tau}h}, \theta \nabla \cdot \v_h)_{I\times\Omega} = (p_{{\tau}h}, P_{\tau}(\theta) \nabla \cdot \v_h)_{I\times\Omega}=(p_{{\tau}h}, \nabla \cdot (P_{\tau}(\theta)  \v_h))_{I\times\Omega}.
    \end{equation}
    Notice that since here $P_{\tau}(\theta(t))\in X_{\tau}^w(L^2(\Omega))$ and it being constant in space and $\v_h\in \X_h$ being constant in time, we have $P_{\tau}(\theta) \v_h \in X^w_{\tau}(\X_h)$.
    Thus, testing the weak formulation of the fully discrete Stokes problem in \cref{eq:spacetime_discretization} with $(P_\tau(\theta) \v_h, 0)$, we have    \begin{align}
	(p_{{\tau}h},\nabla\cdot (P_{\tau}(\theta) \v_h))_{I\times \Omega}
	&=\sum_{m=1}^M \langle \partial_t\u_{\tau h}, P_{\tau}(\theta) \v_h \rangle_{I_m\times \Omega} - (\Delta_h \u_{\tau h}, P_{\tau}(\theta) \v_h)_{I\times \Omega}\\
	&\qquad + \sum_{m=2}^M ([\u_{\tau h}]_{m-1},(P_{\tau}(\theta))_{m-1}^+ \v_h )_{\Omega} - (\f,P_{\tau}(\theta) \v_h)_{I\times \Omega},\label{chap:IS:eq:aux_1}
    \end{align}
    using that $\u_0 = \vec 0$ and the definition of $\Delta_h$ for the second term. 
    Using the H\"older inequality, we obtain the following estimate
    \begin{align}
	\abs{(p_{{\tau}h},\nabla\cdot P_{\tau}(\theta) \v_h 
	)_{I\times \Omega}}
	&\leq  
	\Bigg[  \max_{1\leq m \leq M} \norm{\partial_t \u_{{\tau}h}}_{L^{\infty}(I_m; L^2(\Omega))} 
	+ \norm{\Delta_h \u_{{\tau}h}}_{L^{\infty}(I;L^2(\Omega))} 
    +\norm{\f}_{L^{\infty}(I;L^2(\Omega))} \Bigg]\\
	&\times \norm{P_\tau(\theta) \v_h}_{L^{1}(I;L^2(\Omega))} +\max_{2\leq m \leq M}\norm{{\tau}_m^{-1} [\u_{{\tau}h}]_{m-1}}_{L^2(\Omega)}  \sum^M_{m=1} {\tau}_m \norm{(P_\tau(\theta))_{m-1}^+ \v_h}_{L^2(\Omega)} .\label{eq:p_max_reg_2}
    \end{align} 
    Applying the stability of $P_\tau$ \cref{eq:P_tau_stability} and the bound of $\theta(t)$ in the $L^1$ norm, we have
    \begin{equation}
	\norm{P_\tau(\theta) \v_h}_{L^{1}(I;L^2(\Omega))} \leq C \norm{\theta}_{L^1(I)}\twonorm{\v_h} \leq C\twonorm{\v_h}.\label{eq:p_max_reg_3}
    \end{equation}
    Since  $\theta$ is  supported in the interior of $I_{\tilde m}$ and $P_\tau(\theta)_m=0$ for all $m\neq \tilde m$, we obtain by
    $$
    |P_\tau(\theta)_{\tilde m}^+|\leq C\norm{\theta}_{L^{\infty}(I_{\tilde m})}\leq C \tau_{\tilde m}^{-1}
    $$ 
    (cf.\ \cite[(eq. A.2)]{1995Schatz}) and the second assumption on the time mesh that 
    \begin{equation}
	\tau_{\tilde m +1}\twonorm{P_\tau(\theta)_{\tilde m}^+ \v_h} \leq C \twonorm{\v_h}.\label{eq:p_max_reg_4}
    \end{equation}    
    By \cref{eq:dmpr_infty_with_Laplace_h} we obtain
    \begin{equation}
	\max_{1\leq m \leq M} \norm{\partial_t \u_{{\tau}h}}_{L^{\infty}(I_m; L^2(\Omega))} + \norm{\Delta_h \u_{{\tau}h}}_{L^{\infty}(I;L^2(\Omega))} +\max_{2\leq m \leq M}\norm{{\tau}_m^{-1} [\u_{{\tau}h}]_{m-1}}_{L^2(\Omega)}
	\leq C\ln \frac{\finaltime}{{\tau}}\norm{
	\f}_{L^{\infty}(I; L^2(\Omega))}.
    \end{equation}
    Collecting the estimates above, we established that for any $\tilde t\in I$
    \begin{equation}
	\norm{\nabla p_{{\tau}h}(\tilde t)}_{L^2(\Omega)}\le
	C \sup_{\v_h \in \X_h, \v_h \neq \vec 0}  \frac{(-p_{{\tau}h}, \theta \nabla \cdot \v_h)_{I\times\Omega}}{\twonorm{\v_h}}	\leq C\ln \frac{\finaltime}{{\tau}}\norm{
	\f}_{L^{\infty}(I; L^2(\Omega))}.\label{eq:L2for nabla p}
    \end{equation}

    Next we discuss the case $s=1$.
    Here, direct application of \cref{chap:IS:pressure_est} leads to a $\v_h$ that is dependent on time and thus cannot be separated from the time integral which leads to technical difficulties.
    Thus, we will pursue a similar approach as above.
    We can expand the norm as follows
    \begin{align}
	\norm{\nabla p_{\tau h}}_{L^1(I;L^2(\Omega))} 
	= \sum_{m=1}^M \int_{I_m}\norm{\nabla p_{\tau h}}_{L^2(\Omega)}dt
	\leq \sum_{m=1}^M \tau_m \norm{\nabla p_{\tau h}}_{L^{\infty}(I_m;L^2(\Omega))}.
    \end{align}
    Similarly to the case $s=\infty$, using regularized Dirac functions $\theta^m(t)$ from the proof of \cref{chap:IS:theorem:L2Linfty_stability} for $\tilde t_m \in I_m$, we have
    \begin{align}
	\sum_{m=1}^M \tau_m \norm{\nabla p_{\tau h}(\tilde t_m)}_{L^2(\Omega)}
	\leq C \sum_{m=1}^M \tau_m \sup_{\v_h^m \in \X_h, \v_h^m \neq \vec 0}\frac{ (\nabla p_{\tau h}(\tilde t_m), \v_h^m)_{\Omega}}{\twonorm{\v_h^m}}
	= C \sum_{m=1}^M \tau_m \sup_{\v_h^m \in \X_h, \v_h^m \neq \vec 0}\frac{(\nabla p_{\tau h}, \theta^m\v_h^m)_{I_m \times \Omega}}{\twonorm{\v_h^m}}, \label{eq:aux_est_1}
    \end{align}
    where in the last step we used that $\theta^m$ is supported in $I_m$.
    Since  $\v_h^m$ and $\tau_m$ are constants on each $I_m$,  we can pull the supremum out of the sum, to obtain
    \begin{align}
	\sum_{m=1}^M \tau_m \norm{\nabla p_{\tau h}(\tilde t_m)}_{L^2(\Omega)}
	&\le C \sum_{m=1}^M  \sup_{\v_h^m \in \X_h, \v_h^m \neq \vec 0}\left(\nabla p_{\tau h}, \frac{\theta^m\tau_m\v_h^m}{\twonorm{\v_h^m}}\right)_{I_m \times \Omega}\\
	&= C \sum_{m=1}^M  \sup_{\v_h^m \in \X_h, \v_h^m \neq \vec 0}\left(\nabla p_{\tau h}, \frac{P_\tau(\theta^m)\tau_m\v_h^m}{\twonorm{\v_h^m}}\right)_{I_m \times \Omega}\\
	&= C \sup_{\substack{\v_h^1\in \X_h,\cdots,\v_h^M \in \X_h\\ \v_h^1\neq\vec 0,\cdots,\v_h^M \neq \vec 0}} \left(\nabla p_{\tau h}, \sum_{m=1}^M \frac{P_\tau(\theta^m)\tau_m\v_h^m}{\twonorm{\v_h^m}}\right)_{I \times \Omega}\\
	&= C \sup_{\substack{\v_h^1\in \X_h,\cdots,\v_h^M \in \X_h \\ \v_h^1\neq\vec 0,\cdots,\v_h^M \neq \vec 0}}
	(\nabla p_{\tau h},\tilde \v_{\tau h})_{I \times \Omega},
    \end{align}
    where we defined $\tilde \v_{\tau h}\in  X^w_{\tau}(\X_h)$ by
    $$
    \tilde \v_{\tau h} :=\sum_{m=1}^M \frac{P_\tau(\theta^m)\tau_m\v_h^m}{\twonorm{\v_h^m}}.
    $$
    Using the weak formulation in \cref{eq:spacetime_discretization} and the H\"older estimate as before, we see
    \begin{align}
	\abs{(\nabla p_{\tau h},\tilde \v_{\tau h})_{I \times \Omega}}
	 &\leq C 
	 \Bigg[  \sum_{m=1}^M \norm{\partial_t \u_{{\tau}h}}_{L^1(I_m; L^2(\Omega))} + \norm{\Delta_h \u_{{\tau}h}}_{L^{1}(I;L^2(\Omega))}
	 + \sum_{m=2}^M \tau_m\norm{{\tau}_m^{-1} [\u_{{\tau}h}]_{m-1}}_{L^2(\Omega)} \\
	 &\qquad +  \norm{\f}_{L^1(I;L^2(\Omega))} \Bigg]
	 \times \Bigg[  \max_{1\leq m \leq M}  \norm{\tilde \v_{{\tau}h,m-1}^+}_{L^2(\Omega)}  + \norm{\tilde \v_{{\tau}h}}_{L^{\infty}(I;L^2(\Omega))}\Bigg].
    \end{align}
    By \cref{remark:delta_est} and \cref{chap:IS:corollary:maximal_regularity_discrete} for $s=1$ we have, similar to \cref{eq:dmpr_infty_with_Laplace_h}
    \begin{equation}
	\sum_{m=1}^M \norm{\partial_t \u_{{\tau}h}}_{L^1(I_m; L^2(\Omega))} + \norm{\Delta_h \u_{{\tau}h}}_{L^{1}(I;L^2(\Omega))}
	+ \sum_{m=2}^M \tau_m\norm{{\tau}_m^{-1} [\u_{{\tau}h}]_{m-1}}_{L^2(\Omega)} 
	\le  C\ln \frac{\finaltime}{{\tau}}\norm{\f}_{L^1(I;L^2(\Omega))}.
    \end{equation}
    Using the stability of $P_\tau$ in $L^\infty(I)$ \cref{eq:P_tau_stability}, we obtain
    \begin{align}
	\norm{\tilde \v_{{\tau}h}}_{L^{\infty}(I;L^2(\Omega))}
	= \max_{1\leq m \leq M}\norm{\tilde \v_{{\tau}h,m}}_{L^{\infty}(I_m;L^2(\Omega))}
	&= \max_{1\leq m \leq M}
	\frac{\|P_\tau(\theta^m)\tau_m\v_h^m\|_{L^{\infty}(I_m;L^2(\Omega))}
	}{\twonorm{\v_h^m}}\\
	&\le \max_{1\leq m \leq M} \frac{C \norm{\theta^m}_{L^{\infty}(I_m)}\tau_m\twonorm{\v_h^m}}{\twonorm{\v_h^m}}
	\leq \max_{1\leq m \leq M} C,
    \end{align}
    where we used that $\|\theta^m\|_{L^\infty(I_m)}\le C{\tau_m}^{-1}$. Similarly, $\max_{1\leq m \leq M}  \norm{\tilde \v_{{\tau}h,m-1}^+}_{L^2(\Omega)}\le C$.
    Combining the steps above, we arrive at the following estimate:
    \begin{equation}
	\norm{\nabla p_{\tau h}}_{L^1(I;L^2(\Omega))}
	\leq \sum_{m=1}^M \tau_m \norm{\nabla p_{\tau h}}_{L^{\infty}(I_m;L^2(\Omega))}\leq C \ln \frac{\finaltime}{{\tau}}\norm{\f}_{L^1(I;L^2(\Omega))}.
    \end{equation}
    This shows the estimate in $L^1(I;L^2(\Omega))$. By interpolation we obtain the result for $1\leq s\leq \infty$.
\end{proof}

\begin{corollary}
    Let $p_{{\tau}h}$ be the pressure solution to \cref{eq:spacetime_discretization} with $\f=\vec 0$. Let moreover the assumptions of \cref{chap:IS:pressure_est,remark:delta_est} be fulfilled. Then, there holds for $m= 1,2, \dots, M$
    \begin{equation}
	\norm{\nabla p_{{\tau}h}}_{L^{\infty}(I_m;L^2(\Omega))} \leq \frac{C}{t_m} \norm{
	\u_0}_{L^2(\Omega)}.
    \end{equation}
\end{corollary}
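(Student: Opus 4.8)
My plan is to transcribe the $s=\infty$ argument from the proof of \cref{DMPR:Pressure}, with two modifications: the inhomogeneous maximal regularity bound \cref{eq:dmpr_infty_with_Laplace_h} is replaced by the homogeneous smoothing estimate \cref{eq:smoothing_estimate} of \cref{chap:IS:theorem:maximal_regularity_smoothing_discrete}, which supplies the decaying factor $1/t_m$, and the initial data term (now with $\f=\vec 0$ but $\u_0\neq\vec 0$) must be carried through. Fix $m$ and $\tilde t\in I_m$ and take the regularized Dirac $\theta$ of \cref{theta_def} supported in the interior of $I_m$. As in \cref{eq:p_max_reg_1}, \cref{chap:IS:pressure_est} and integration by parts give
\[
\norm{\nabla p_{\tau h}(\tilde t)}_{L^2(\Omega)} \le C\sup_{\v_h\in\X_h,\,\v_h\neq\vec 0}\frac{(-p_{\tau h},\theta\,\nabla\cdot\v_h)_{I\times\Omega}}{\twonorm{\v_h}},
\]
and inserting $P_\tau$ lets me replace $\theta$ by $P_\tau(\theta)$, with $P_\tau(\theta)\v_h\in X^w_\tau(\X_h)$ supported on $I_m$ alone.

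Next I would test the fully discrete system \cref{eq:spacetime_discretization} with $(P_\tau(\theta)\v_h,0)$. Because $P_\tau(\theta)$ lives only on $I_m$, every term localizes to $I_m$ and only the jump at $t_{m-1}$ survives; using the definition of $\Delta_h$ (legitimate here by \cref{remark:delta_est}), for $m\ge 2$ both the data term and the form's own initial term $(\u_{\tau h,0}^+,\cdot)$ vanish and I obtain
\[
(p_{\tau h},\nabla\cdot(P_\tau(\theta)\v_h))_{I\times\Omega} = \langle\partial_t\u_{\tau h},P_\tau(\theta)\v_h\rangle_{I_m\times\Omega} - (\Delta_h\u_{\tau h},P_\tau(\theta)\v_h)_{I_m\times\Omega} + ([\u_{\tau h}]_{m-1},(P_\tau(\theta))_{m-1}^+\v_h)_\Omega.
\]
H\"older's inequality together with $\norm{P_\tau(\theta)\v_h}_{L^1(I_m;L^2(\Omega))}\le C\twonorm{\v_h}$ and $\tau_m|(P_\tau(\theta))_{m-1}^+|\le C$ -- both from the stability and pointwise bounds of $P_\tau(\theta)$ and the mesh regularity, as in \cref{eq:p_max_reg_3,eq:p_max_reg_4} -- reduce the right-hand side to a multiple of $\twonorm{\v_h}$ times the three quantities controlled by \cref{eq:smoothing_estimate}. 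Since each of these is bounded by $\frac{C}{t_m}\norm{\mathbb{P}_h\u_0}_{L^2(\Omega)}$, and the $\tau_m^{-1}$ from $(P_\tau(\theta))_{m-1}^+$ pairs with the $\tau_m$ hidden in $\norm{\tau_m^{-1}[\u_{\tau h}]_{m-1}}_{L^2(\Omega)}$, dividing by $\twonorm{\v_h}$, taking the supremum and then the supremum over $\tilde t\in I_m$ yields $\norm{\nabla p_{\tau h}}_{L^\infty(I_m;L^2(\Omega))}\le \frac{C}{t_m}\norm{\u_0}_{L^2(\Omega)}$, where I used $\norm{\mathbb{P}_h\u_0}_{L^2(\Omega)}\le\norm{\u_0}_{L^2(\Omega)}$.

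The one genuinely new point, and the step I expect to need the most care, is the interval $m=1$, where -- in contrast to \cref{DMPR:Pressure} with $\u_0=\vec 0$ -- the datum $(\u_0,(P_\tau(\theta)\v_h)_0^+)$ does not vanish. Here the form's initial term and the datum combine into $(\u_{\tau h,0}^+-\u_0,(P_\tau(\theta))_0^+\v_h)_\Omega$, which I would split as $([\u_{\tau h}]_0,(P_\tau(\theta))_0^+\v_h)_\Omega-(\u_0-\mathbb{P}_h\u_0,(P_\tau(\theta))_0^+\v_h)_\Omega$ using $[\u_{\tau h}]_0=\u_{\tau h,0}^+-\mathbb{P}_h\u_0$ from \cref{chap:IS:theorem:maximal_regularity_smoothing_discrete}. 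The first piece is handled exactly as the jump term above via \cref{eq:smoothing_estimate}. The second, a genuine projection error that is absent when $\u_0=\vec 0$, is estimated by $|(P_\tau(\theta))_0^+|\,\norm{\u_0-\mathbb{P}_h\u_0}_{L^2(\Omega)}\twonorm{\v_h}\le C\tau_1^{-1}\norm{\u_0}_{L^2(\Omega)}\twonorm{\v_h}$, and since $t_1=\tau_1$ this is exactly of the required order $\frac{C}{t_1}\norm{\u_0}_{L^2(\Omega)}\twonorm{\v_h}$; here I used that $\mathbb{P}_h$ is an $L^2$-contraction so $\norm{\u_0-\mathbb{P}_h\u_0}_{L^2(\Omega)}\le\norm{\u_0}_{L^2(\Omega)}$. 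This closes the estimate uniformly in $m$.
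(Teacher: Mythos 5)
Your proposal is correct and takes essentially the same route as the paper, whose proof of this corollary is exactly the instruction to rerun the $s=\infty$ argument of \cref{DMPR:Pressure} localized to $I_m$, with the smoothing estimate of \cref{chap:IS:theorem:maximal_regularity_smoothing_discrete} in place of \cref{chap:IS:corollary:maximal_regularity_discrete}. Your explicit handling of the $m=1$ case --- splitting $\u_{\tau h,0}^+-\u_0$ into the jump $[\u_{\tau h}]_0$ and the projection error $\u_0-\mathbb{P}_h\u_0$, the latter bounded via $t_1=\tau_1$ --- is a detail the paper leaves implicit, and it is handled correctly.
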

\begin{proof}
    The result follows by the same arguments which we used to show the theorem above. 
    A notable difference is that we only consider $I_m$ here, not the whole domain and use \cref{chap:IS:theorem:maximal_regularity_smoothing_discrete} instead of \cref{chap:IS:corollary:maximal_regularity_discrete}. 
\end{proof}

\section*{Funding}
Deutsche Forschungsgemeinschaft (German Research Foundation) - Project number 188264188/GRK1754 to N.B.;
NSF grant DMS-1913133 to D.L.

\bibliographystyle{IMANUM-BIB}
\bibliography{references}

\begin{thebibliography}{}

\bibitem[Abe {\em et~al.}(2015)Abe, Giga, \& Hieber]{2015Abe}
{\sc Abe, K., Giga, Y. \& Hieber, M.} (2015)
\newblock Stokes resolvent estimates in spaces of bounded functions.
\newblock {\em Ann. Sci. \'Ec. Norm. Sup\'er. (4)\/}, {\bf 48}, 537--559.

\bibitem[Ahmed {\em et~al.}(2017)Ahmed, Becher, \& Matthies]{2017Ahmed}
{\sc Ahmed, N., Becher, S. \& Matthies, G.} (2017)
\newblock Higher-order discontinuous {G}alerkin time stepping and local
  projection stabilization techniques for the transient {S}tokes problem.
\newblock {\em Comput. Methods Appl. Mech. Engrg.}, {\bf 313}, 28--52.

\bibitem[Apel \& Kempf(2021)Apel \& Kempf]{Apel:2021}
{\sc Apel, T. \& Kempf, V.} (2021)
\newblock Pressure-robust error estimate of optimal order for the {S}tokes
  equations: domains with re-entrant edges and anisotropic mesh grading.
\newblock {\em Calcolo\/}, {\bf 58}, Paper No. 15, 20.

\bibitem[Ashyralyev \& Sobolevskii(1994)Ashyralyev \&
  Sobolevskii]{1994Ashyralyev}
{\sc Ashyralyev, A. \& Sobolevskii, P.~E.} (1994)
\newblock {\em Well-posedness of parabolic difference equations\/}. Operator
  Theory: Advances and Applications,  vol.~69.
\newblock Basel: Birkh\"{a}user Verlag, pp. xiv+349.

\bibitem[Chrysafinos \& Walkington(2010)Chrysafinos \&
  Walkington]{2010Chrysafinos}
{\sc Chrysafinos, K. \& Walkington, N.~J.} (2010)
\newblock Discontinuous {G}alerkin approximations of the {S}tokes and
  {N}avier-{S}tokes equations.
\newblock {\em Math. Comp.}, {\bf 79}, 2135--2167.

\bibitem[Dauge(1989)Dauge]{1989Dauge}
{\sc Dauge, M.} (1989)
\newblock Stationary {S}tokes and {N}avier-{S}tokes systems on two- or
  three-dimensional domains with corners. {I}. {L}inearized equations.
\newblock {\em SIAM J. Math. Anal.}, {\bf 20}, 74--97.

\bibitem[de~Simon(1964)de~Simon]{1964Simon}
{\sc de~Simon, L.} (1964)
\newblock Un'applicazione della teoria degli integrali singolari allo studio
  delle equazioni differenziali lineari astratte del primo ordine.
\newblock {\em Rend. Sem. Mat. Univ. Padova\/}, {\bf 34}, 205--223.

\bibitem[Eriksson {\em et~al.}(1998)Eriksson, Johnson, \&
  Larsson]{1998Eriksson}
{\sc Eriksson, K., Johnson, C. \& Larsson, S.} (1998)
\newblock Adaptive finite element methods for parabolic problems. {VI}.
  {A}nalytic semigroups.
\newblock {\em SIAM J. Numer. Anal.}, {\bf 35}, 1315--1325.

\bibitem[Eriksson \& Johnson(1995)Eriksson \& Johnson]{1995Eriksson}
{\sc Eriksson, K. \& Johnson, C.} (1995)
\newblock Adaptive finite element methods for parabolic problems. {II}.
  {O}ptimal error estimates in {$L_\infty L_2$} and {$L_\infty L_\infty$}.
\newblock {\em SIAM J. Numer. Anal.}, {\bf 32}, 706--740.

\bibitem[Galdi(2011)Galdi]{2011Galdi}
{\sc Galdi, G.~P.} (2011)
\newblock {\em An introduction to the mathematical theory of the
  {N}avier-{S}tokes equations\/}.
\newblock Springer Monographs in Mathematics, second edn.
\newblock New York: Springer, pp. xiv+1018.
\newblock Steady-state problems.

\bibitem[Girault {\em et~al.}(2015)Girault, Nochetto, \& Scott]{Girault:2015}
{\sc Girault, V., Nochetto, R.~H. \& Scott, L.~R.} (2015)
\newblock Max-norm estimates for {S}tokes and {N}avier-{S}tokes approximations
  in convex polyhedra.
\newblock {\em Numer. Math.}, {\bf 131}, 771--822.

\bibitem[Girault \& Raviart(1986)Girault \& Raviart]{1986Girault}
{\sc Girault, V. \& Raviart, P.~A.} (1986)
\newblock {\em Finite element methods for {N}avier-{S}tokes equations\/}.
  Springer Series in Computational Mathematics,  vol.~5.
\newblock Berlin: Springer-Verlag, pp. x+374.

\bibitem[Guermond \& Pasciak(2008)Guermond \& Pasciak]{2008Guermond}
{\sc Guermond, J.-L. \& Pasciak, J.~E.} (2008)
\newblock Stability of discrete {S}tokes operators in fractional {S}obolev
  spaces.
\newblock {\em J. Math. Fluid Mech.}, {\bf 10}, 588--610.

\bibitem[Guzman {\em et~al.}(2013)Guzman, Salgado, \& Sayas]{2013Guzman}
{\sc Guzman, J., Salgado, A.~J. \& Sayas, F.-J.} (2013)
\newblock A note on the {L}adyzenskaja-{B}abuska-{B}rezzi condition.
\newblock {\em J. Sci. Comput.}, {\bf 56}, 219--229.

\bibitem[Heywood \& Rannacher(1982)Heywood \& Rannacher]{1982Heywood}
{\sc Heywood, J.~G. \& Rannacher, R.} (1982)
\newblock Finite element approximation of the nonstationary {N}avier-{S}tokes
  problem. {I}. {R}egularity of solutions and second-order error estimates for
  spatial discretization.
\newblock {\em SIAM J. Numer. Anal.}, {\bf 19}, 275--311.

\bibitem[Heywood \& Rannacher(1986)Heywood \& Rannacher]{1986Heywood}
{\sc Heywood, J.~G. \& Rannacher, R.} (1986)
\newblock Finite element approximation of the nonstationary {N}avier-{S}tokes
  problem. {II}. {S}tability of solutions and error estimates uniform in time.
\newblock {\em SIAM J. Numer. Anal.}, {\bf 23}, 750--777.

\bibitem[Heywood \& Rannacher(1990)Heywood \& Rannacher]{1990Heywood}
{\sc Heywood, J.~G. \& Rannacher, R.} (1990)
\newblock Finite-element approximation of the nonstationary {N}avier-{S}tokes
  problem. {IV}. {E}rror analysis for second-order time discretization.
\newblock {\em SIAM J. Numer. Anal.}, {\bf 27}, 353--384.

\bibitem[Kellogg \& Osborn(1976)Kellogg \& Osborn]{1976Kellogg}
{\sc Kellogg, R.~B. \& Osborn, J.~E.} (1976)
\newblock A regularity result for the {S}tokes problem in a convex polygon.
\newblock {\em J. Functional Analysis\/}, {\bf 21}, 397--431.

\bibitem[Leykekhman \& Vexler(2016)Leykekhman \& Vexler]{2016Leykekhmana}
{\sc Leykekhman, D. \& Vexler, B.} (2016)
\newblock Pointwise best approximation results for {G}alerkin finite element
  solutions of parabolic problems.
\newblock {\em SIAM J. Numer. Anal.}, {\bf 54}, 1365--1384.

\bibitem[Leykekhman \& Vexler(2017a)Leykekhman \& Vexler]{2017Leykekhman}
{\sc Leykekhman, D. \& Vexler, B.} (2017a)
\newblock Discrete maximal parabolic regularity for {G}alerkin finite element
  methods.
\newblock {\em Numer. Math.}, {\bf 135}, 923--952.

\bibitem[Leykekhman \& Vexler(2017b)Leykekhman \& Vexler]{2017Leykekhmana}
{\sc Leykekhman, D. \& Vexler, B.} (2017b)
\newblock Global and interior pointwise best approximation results for the
  gradient of {G}alerkin solutions for parabolic problems.
\newblock {\em SIAM J. Numer. Anal.}, {\bf 55}, 2025--2049.

\bibitem[Leykekhman \& Vexler(2018)Leykekhman \& Vexler]{2018LeVe}
{\sc Leykekhman, D. \& Vexler, B.} (2018)
\newblock Discrete maximal parabolic regularity for {G}alerkin finite element
  methods for nonautonomous parabolic problems.
\newblock {\em SIAM J. Numer. Anal.}, {\bf 56}, 2178--2202.

\bibitem[Leykekhman \& Wahlbin(2008)Leykekhman \& Wahlbin]{2008Leykekhmana}
{\sc Leykekhman, D. \& Wahlbin, L.~B.} (2008)
\newblock A posteriori error estimates by recovered gradients in parabolic
  finite element equations.
\newblock {\em BIT\/}, {\bf 48}, 585--605.

\bibitem[Lunardi(1995)Lunardi]{1995Lunardi}
{\sc Lunardi, A.} (1995)
\newblock {\em Analytic semigroups and optimal regularity in parabolic
  problems\/}.
\newblock Modern Birkh\"{a}user Classics.
\newblock Basel: Birkh\"{a}user/Springer Basel AG, pp. xviii+424.

\bibitem[Maz'ya(2018)Maz'ya]{2018Mazya}
{\sc Maz'ya, V.} (2018)
\newblock Seventy five (thousand) unsolved problems in analysis and partial
  differential equations.
\newblock {\em Integral Equations Operator Theory\/}, {\bf 90}, Art. 25, 44.

\bibitem[Meidner {\em et~al.}(2011)Meidner, Rannacher, \& Vexler]{2011Meidner}
{\sc Meidner, D., Rannacher, R. \& Vexler, B.} (2011)
\newblock A priori error estimates for finite element discretizations of
  parabolic optimization problems with pointwise state constraints in time.
\newblock {\em SIAM J. Control Optim.}, {\bf 49}, 1961--1997.

\bibitem[Meidner \& Vexler(2008)Meidner \& Vexler]{MeidnerVexler:2008I}
{\sc Meidner, D. \& Vexler, B.} (2008)
\newblock A priori error estimates for space-time finite element discretization
  of parabolic optimal control problems. {I}. {P}roblems without control
  constraints.
\newblock {\em SIAM J. Control Optim.}, {\bf 47}, 1150--1177.

\bibitem[Schatz {\em et~al.}(1980)Schatz, Thom\'{e}e, \& Wahlbin]{1980Schatza}
{\sc Schatz, A.~H., Thom\'{e}e, V.~C. \& Wahlbin, L.~B.} (1980)
\newblock Maximum norm stability and error estimates in parabolic finite
  element equations.
\newblock {\em Comm. Pure Appl. Math.}, {\bf 33}, 265--304.

\bibitem[Schatz \& Wahlbin(1995)Schatz \& Wahlbin]{1995Schatz}
{\sc Schatz, A.~H. \& Wahlbin, L.~B.} (1995)
\newblock Interior maximum-norm estimates for finite element methods. {II}.
\newblock {\em Math. Comp.}, {\bf 64}, 907--928.

\bibitem[Shen(2012)Shen]{2012Shen}
{\sc Shen, Z.} (2012)
\newblock Resolvent estimates in {$L^p$} for the {S}tokes operator in
  {L}ipschitz domains.
\newblock {\em Arch. Ration. Mech. Anal.}, {\bf 205}, 395--424.

\bibitem[Sohr(2014)Sohr]{2014Sohr}
{\sc Sohr, H.} (2014)
\newblock {\em The {N}avier-{S}tokes {E}quations: {A}n {E}lementary
  {F}unctional {A}nalytic {A}pproach\/}.
\newblock {B}irkhäuser {A}dvanced {T}exts {B}asler {L}ehrbücher.
\newblock Basel: Birkhäuser.

\bibitem[Temam(1977)Temam]{1977Temam}
{\sc Temam, R.} (1977)
\newblock {\em Navier-{S}tokes equations. {T}heory and numerical analysis\/}.
\newblock Amsterdam-New York-Oxford: North-Holland Publishing Co., pp. x+500.
\newblock Studies in Mathematics and its Applications, Vol. 2.

\bibitem[Thom\'{e}e {\em et~al.}(1989)Thom\'{e}e, Xu, \& Zhang]{1989Thomee}
{\sc Thom\'{e}e, V., Xu, J. \& Zhang, N.~Y.} (1989)
\newblock Superconvergence of the gradient in piecewise linear finite-element
  approximation to a parabolic problem.
\newblock {\em SIAM J. Numer. Anal.}, {\bf 26}, 553--573.

\end{thebibliography}

\clearpage

\appendix

\end{document}